\begin{document}  

\newcommand{\norm}[1]{\| #1 \|}
\def\N{\mathbb N}
\def\Z{\mathbb Z}
\def\Q{\mathbb Q}
\def\mod{\textit{\emph{~mod~}}}
\def\R{\mathcal R}
\def\S{\mathcal S}
\def\*  C{{*  \mathcal C}} 
\def\C{\mathcal C}
\def\D{\mathcal D}
\def\J{\mathcal J}
\def\M{\mathcal M}
\def\T{\mathcal T}          

\newcommand{\Hom}{{\rm Hom}}
\newcommand{\End}{{\rm End}}
\newcommand{\Ext}{{\rm Ext}}
\newcommand{\Mor}{{\rm Mor}\,}
\newcommand{\Aut}{{\rm Aut}\,}
\newcommand{\Hopf}{{\rm Hopf}\,}
\newcommand{\Ann}{{\rm Ann}\,}
\newcommand{\Ker}{{\rm Ker}\,}
\newcommand{\Reg}{{\rm Reg}\,}
\newcommand{\Den}{{\rm Den}\,}
\newcommand{\Coker}{{\rm Coker}\,}
\newcommand{\Img}{{\rm Im}\,}
\newcommand{\coim}{{\rm Coim}\,}
\newcommand{\Trace}{{\rm Trace}\,}
\newcommand{\Char}{{\rm Char}\,}
\newcommand{\Mod}{{\rm mod}}
\newcommand{\Spec}{{\rm Spec}\,}
\newcommand{\sgn}{{\rm sgn}\,}
\newcommand{\Id}{{\rm Id}\,}
\newcommand{\Com}{{\rm Com}\,}
\newcommand{\codim}{{\rm codim}}
\newcommand{\Mat}{{\rm Mat}}
\newcommand{\can}{{\rm can}}
\newcommand{\sign}{{\rm sign}}
\newcommand{\kar}{{\rm kar}}
\newcommand{\rad}{{\rm rad}}
\newcommand{\ra}{\rightarrow}
\newcommand{\rs}{\rightsquigarrow}
\def\lan{\langle}
\def\ran{\rangle}
\def\ot{\otimes}

\def\id{{\small \textit{\emph{1}}}\!\!1}    
\def\To{{\multimap\!\to}}
\def\bigperp{{\LARGE\textrm{$\perp$}}} 
\newcommand{\QED}{\hspace{\stretch{1}}
\makebox[0mm][r]{$\Box$}\\}

\def\RR{{\mathbb R}}
\def\FF{{\mathbb F}}
\def\NN{{\mathbb N}}
\def\CC{{\mathbb C}}
\def\DD{{\mathbb D}}
\def\ZZ{{\mathbb Z}}
\def\QQ{{\mathbb Q}}
\def\HH{{\mathbb H}}
\def\units{{\mathbb G}_m}
\def\GG{{\mathbb G}}
\def\EE{{\mathbb E}}
\def\FF{{\mathbb F}}
\def\rightact{\hbox{$\leftharpoonup$}}
\def\leftact{\hbox{$\rightharpoonup$}}

\newcommand{\Aa}{\mathcal{A}}
\newcommand{\Bb}{\mathcal{B}}
\newcommand{\Cc}{\mathcal{C}}
\newcommand{\Dd}{\mathcal{D}}
\newcommand{\Ee}{\mathcal{E}}
\newcommand{\Ff}{\mathcal{F}}
\newcommand{\Hh}{\mathcal{H}}
\newcommand{\Ii}{\mathcal{I}}
\newcommand{\Mm}{\mathcal{M}}
\newcommand{\Pp}{\mathcal{P}}
\newcommand{\Rr}{\mathcal{R}}
\def\*  C{{}*  \hspace*  {-1pt}{\Cc}}

\def\text#1{{\rm {\rm #1}}}

\def\smashco{\mathrel>\joinrel\mathrel\triangleleft}
\def\cosmash{\mathrel\triangleright\joinrel\mathrel<}

\def\Nat{\dul{\rm Nat}}

\renewcommand{\subjclassname}{\textup{2000} Mathematics Subject
     Classification}

\newtheorem{prop}{Proposition}[section] 
\newtheorem{lemma}[prop]{Lemma}
\newtheorem{cor}[prop]{Corollary}
\newtheorem{theo}[prop]{Theorem}
                       
\theoremstyle{definition}
\newtheorem{Def}[prop]{Definition}
\newtheorem{ex}[prop]{Example}
\newtheorem{exs}[prop]{Examples}
\newtheorem{Not}[prop]{Notation}
\newtheorem{Ax}[prop]{Axiom}
\newtheorem{rems}[prop]{Remarks}
\newtheorem{rem}[prop]{Remark}
\newtheorem{op}[prop]{Open problem}
\newtheorem{conj}[prop]{Conjecture}

\def\smashco{\mathrel>\joinrel\mathrel\triangleleft}
\def\curlarrow{\mathrel\sim\joinrel\mathrel>}

\title{Fantastic deductive systems in probability theory on generalizations of fuzzy structures}

\author[Lavinia Corina Ciungu]{Lavinia Corina Ciungu} 

\begin{abstract} 
The aim of this paper is to introduce the notion of fantastic deductive systems on generalizations of fuzzy 
structures, and to emphasize their role in the probability theory on these algebras. 
We give a characterization of commutative pseudo-BE algebras and we generalize an axiom system consisting of four identities to the case of commutative pseudo-BE algebras. 
We define the fantastic deductive systems of pseudo-BE algebras and we investigate their 
properties. It is proved that, if a pseudo-BE(A) algebra $A$ is commutative, then all deductive systems of $A$ are fantastic. 
Moreover, we generalize the notions of measures, state-measures and measure-morphisms to the case of 
pseudo-BE algebras and we also prove that there is a one-to-one correspondence between the set of all Bosbach 
states on a bounded pseudo-BE algebra and the set of its state-measures. 
The notions of internal states and state-morphism operators on pseudo-BCK algebras are extended to the case of pseudo-BE algebras and we also prove that any type II state operator on a pseudo-BE algebra is a state-morphism 
operator on it. 
The notions of pseudo-valuation and commutative pseudo-valuation on pseudo-BE algebras are defined and investigated.  For the case of commutative pseudo-BE algebras we prove that the two kind of pseudo-valuations coincide.   
Characterizations of pseudo-valuations and commutative pseudo-valuations are given.
We show that the kernel of a Bosbach state (state-morphism, measure, type II state operator, pseudo-valuation) is a fantastic deductive system. \\

\textbf{Keywords:} {Pseudo-BE algebra, commutative pseudo-BE algebra, fantastic deductive system, Bosbach state, measure, internal state, pseudo-valuation} \\
\textbf{AMS classification (2000):} 03G25, 06F35, 003B52
\end{abstract}

\maketitle

\section{Introduction}

Developing probabilistic theories on algebras of multiple-valued logics is a central topic in the study of 
fuzzy systems.  
Starting from the systems of positive implicational calculus, weak systems of positive implicational calculus and BCI and BCK systems, in 1966 Y. Imai and K. Is$\rm\acute{e}$ki introduced the BCK-algebras (\cite{Imai}). 
BCK-algebras are also used in a dual form, with an implication $\to$ and with one constant element $1$, that is the greatest element. Dual BCK-algebras were defined in \cite{Kim2}. 
BE-algebras have been defined in \cite{Kim1} as a generalization of BCK-algebras, and they have intensively been studied by many authors (\cite{Meng1}, \cite{Ahn2}, \cite{Rez4}, \cite{Wal2}). 
Commutative BE-algebras have been defined in \cite{Wal1} and investigated in \cite{Ahn1}, \cite{Rez2}, \cite{Cilo1}.  
It was proved in \cite{Wal1} that any dual BCK-algebra is a BE-algebra and any commutative BE-algebra is a dual 
BCK-algebra.  
Pseudo-BCK algebras were introduced by G. Georgescu and A. Iorgulescu in \cite{Geo15} as algebras 
with "two differences", a left- and right-difference, and with a constant element $0$ as the least element. Nowadays pseudo-BCK algebras are used in a dual form, with two implications, $\ra$ and $\rs$ and with one constant element $1$, that is the greatest element. Thus such pseudo-BCK algebras are in the "negative cone" and are also called "left-ones". Pseudo-BCK algebras were intensively studied in \cite{Ior14}, \cite{Ciu2}, \cite{Ior1}, \cite{Kuhr6}.  
Commutative pseudo-BCK algebras were originally defined by G. Georgescu and A. Iorgulescu in \cite{Geo15} 
under the name of \emph{semilattice-ordered pseudo-BCK algebras} and properties of these structures were investigated 
in \cite{Kuhr2} and \cite{Ciu7}. 
Pseudo-BE algebras were introduced in \cite{Bor2} as generalizations of BE-algebras and properties of these structures have recently been studied in \cite{Rez1} and \cite{Bor4}.   
Commutative pseudo-BE algebras were defined and investigated in \cite{Ciu33}. 
It was proved that the class of commutative pseudo-BE algebras is equivalent to the class of commutative 
pseudo-BCK algebras.  
Based on this result, all results holding for commutative pseudo-BCK algebras also hold for commutative 
pseudo-BE algebras. 
Introduced by W. Dudek and Y.B. Jun (\cite{Dud1}), pseudo-BCI algebras are pseudo-BCK algebras having no greatest element. Recently the pseudo-CI algebras were defined and studied in \cite{Rez7} as pseudo-BE algebras without having a greatest element. \\
Commutative ideals in BCK-algebras were introduced in \cite{Meng2}, and they were generalized in \cite{Meng3} and 
\cite{Rez2} for the case of BCI-algebras and BE-algebras, respectively. 
Commutative filters were investigated under the name of fantastic filters in \cite{Rac3}, \cite{Rac4} and \cite{Alav1} for bounded R$\ell$-monoids and pseudo-hoop algebras.  
Commutative deductive systems of pseudo-BCK algebras have been defined and studied in \cite{Ciu7}, while 
commutative pseudo-BCI algebras and commutative pseudo-BCI filters were investigated in \cite{Lu1}.  
The class of commutative ideals and deductive systems proved to play an important role in the study of internal 
states on BCK-algebras and pseudo-BCK algebras (see \cite{Bor1}, \cite{Ciu7}). 
Involutive filters were defined and studied in \cite{Zhu1} for commutative residuated lattices under the name of regular filters and they were investigated in \cite{Rac5} for the case of non-commutative residuated lattices. \\
In analogy to probability measure, the states on multiple-valued logic algebras proved to be the most suitable models for averaging the truth-value in their corresponding logics. 
States or measures give a probabilistic interpretation of randomness of events of given algebraic structures. 
For MV-algebras, Mundici introduced states (an analogue of probability measures) in 1995,
\cite{Mun1}, as averaging of the truth-value in \L ukasiewicz logic. 
The notion of a Bosbach state has been studied for other algebras of fuzzy structures such as pseudo-BL algebras, 
$R\ell$-monoids, residuated lattices, pseudo-hoops and pseudo-BCK algebras.
States on BE-algebras have been defined in \cite{Bor3}, while the states on pseudo-BE algebras were investigated in  \cite{Rez6}. Measures and internal states on pseudo-BCK algebras were studied in \cite{Ciu25} and \cite{Ciu7}, respectively. Pseudo-valuations on Hilbert algebras were defined and investigated in \cite{Bus1}, \cite{Bus2}, while  
the notions of pseudo-valuations on residuated lattices have been introduced in \cite{Bus3}. 
The concept of valuations on BE-algebras was defined and investigated in \cite{Lee1}. \\ 

The aim of this paper is to introduce the notion of fantastic deductive systems on pseudo-BE algebras, and to 
emphasize their role in the probability theory on these algebras. \\
Moreover, we generalize to the case of pseudo-BE algebras the notions of measures, state-measures and measure-morphisms investigated in \cite{Ciu25} for the case of pseudo-BCK algebras. We show that any measure morphism on a pseudo-BE algebra is a measure, and the kernel of a measure is a normal deductive system. 
If $A$ is a bounded pseudo-BE(A) algebra, then we prove that there is a one-to-one correspondence between the set 
of all Bosbach states $s$ on $A$ with $s(0)=0$ and the set of all state-measures on $A$.
The notions of internal states and state-morphism operators on pseudo-BCK algebras are extended to the case of pseudo-BE algebras and some of their properties are investigated. 
We also prove that any type II state operator on a pseudo-BE algebra is a state-morphism operator on it. \\ 
We give a characterization of commutative pseudo-BE algebras and we generalize to the case of 
commutative pseudo-BE algebras the axiom system given in \cite{Rez2} for commutative BE-algebras. 
It is proved that in the case of a commutative pseudo-BE algebra the two types of internal states coincide.
Moreover, if a commutative pseudo-BE algebra is linearly ordered, then any internal state is also a 
state-morphism operator. \\ 
We define the fantastic deductive systems of pseudo-BE algebras and we investigate their 
properties. It is proved that, if a pseudo-BE(A) algebra $A$ is commutative, then all deductive systems of $A$ are fantastic. We show that the kernel of a Bosbach state (state-morphism, measure, type II state operator) is a fantastic 
deductive system. \\ 
We introduce the notion of an involutive deductive system of a bounded pseudo-BE algebra and we prove that every 
fantastic deductive system on a bounded pseudo-BE algebra is an involutive deductive system. 
It is also proved that the kernel of a Bosbach state on a bounded pseudo-BE algebra is an involutive deductive 
system. 
The notions of pseudo-valuation and commutative pseudo-valuation on pseudo-BE algebras are defined and investigated. Given a pseudo-BE algebra $A$, it is proved that the kernel of a commutative pseudo-valuation on $A$ is a fantastic deductive system of $A$. If moreover $A$ is commutative, then we prove that any pseudo-valuation on $A$ is commutative. 
Characterizations of pseudo-valuations and commutative pseudo-valuations are given.
We study the relationships between the pseudo-valuations of homomorphic and isomorphic pseudo-BE algebras and 
the relationships between their kernels.

$\vspace*{5mm}$

\section{Preliminaries}

In this section we recall some basic notions and results regarding pseudo-BE algebras: properties, examples, 
deductive systems, congruences, homomorphisms.   

\begin{Def} \label{psBE-10} $\rm($\cite{Kuhr6}$\rm)$
A structure $(A,\rightarrow,\rightsquigarrow,1)$ of the type $(2,2,0)$ is a \emph{pseudo-BCK algebra} 
(more precisely, \emph{reversed left-pseudo-BCK algebra}) iff it satisfies the following identities and 
quasi-identity, for all $x, y, z \in A$:\\ 
$(psBCK_1)$ $(x \rightarrow y) \rightsquigarrow [(y \rightarrow z) \rightsquigarrow (x \rightarrow z)]=1;$ \\
$(psBCK_2)$ $(x \rightsquigarrow y) \rightarrow [(y \rightsquigarrow z) \rightarrow (x \rightsquigarrow z)]=1;$ \\
$(psBCK_3)$ $1 \rightarrow x = x;$ \\
$(psBCK_4)$ $1 \rightsquigarrow x = x;$ \\
$(psBCK_5)$ $x \rightarrow 1 = 1;$ \\
$(psBCK_6)$ $(x\rightarrow y =1$ and $y \rightarrow x=1)$ implies $x=y$. 
\end{Def}

The partial order $\le$ is defined by $x \le y$ iff $x \rightarrow y =1$ (iff $x \rightsquigarrow y =1$). \\
For details regarding pseudo-BCK algebras we refer the reader to \cite{Ior1}, \cite{Ior14}, \cite{Ciu2}, 
\cite{Kuhr2}. \\
Let $(A,\ra, \rs,1)$ be a pseudo-BCK algebra. Denote: \\ 
$\hspace*{2cm}$ $x\vee_1 y=(x\ra y)\rs y$ \\
$\hspace*{2cm}$ $x\vee_2 y=(x\rs y)\ra y$, \\
for all $x, y\in A$. \\ 
If $\ra = \rs$ then the pseudo-BCK algebra $(A,\ra, \rs,1)$ is a BCK-algebra and \\ 
$\hspace*{2cm}$ $x\vee y=(x\ra y)\ra y$, \\
for all $x, y\in A$. \\

\begin{prop} \label{psBE-20} $\rm($\cite{Ior1}$\rm)$ Let $(A,\rightarrow,\rightsquigarrow,1)$ be a pseudo-BCK algebra. 
Then the following hold for all $x, y, z\in A$: \\
$(1)$ $x\rightarrow (y\rightsquigarrow z)=y\rightsquigarrow (x\rightarrow z);$ \\
$(2)$ $x\rightsquigarrow (y\rightarrow z)=y\rightarrow (x\rightsquigarrow z);$ \\
$(3)$ $x \leq y$ implies $y \rightarrow z \leq x \rightarrow z$ and
      $y \rightsquigarrow z \leq x \rightsquigarrow z;$ \\
$(4)$ $x \leq y$ implies $z \rightarrow x \leq z \rightarrow y$ and
      $z\rightsquigarrow x \leq z \rightsquigarrow y;$ \\
$(5)$ $x\rightarrow y\le (z\rightarrow x)\rightarrow (z\rightarrow y)$ and 
      $x\rightsquigarrow y\le (z\rightsquigarrow x)\rightsquigarrow (z\rightsquigarrow y);$ \\
$(6)$ $x\vee_1 y\ra y=x\ra y$ and $x\vee_2 y\rs y=x\rs y$.                         
\end{prop}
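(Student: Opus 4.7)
My plan is to prove the six items not in their listed order but in a logical sequence: preliminary lemmas first, then (3), then the exchange identities (1)--(2), and finally (5), (4), and (6). Throughout, the order-definition $x\le y\Leftrightarrow x\ra y=1\Leftrightarrow x\rs y=1$ together with the antisymmetry axiom $(psBCK_6)$ will be the workhorse for converting equalities into pairs of inequalities.

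I would first record two groups of preliminary facts: the reflexivities $x\ra x=1=x\rs x$, and the absorption inequalities $y\le x\ra y$ and $y\le x\rs y$. The reflexivities follow from $(psBCK_1)$ and $(psBCK_2)$ with $x=y=1$, simplifying via $(psBCK_3)$--$(psBCK_5)$. The absorption inequalities come from specialising $y=1$: $(psBCK_1)$ collapses to $z\rs(x\ra z)=1$, hence $z\le x\ra z$, and $(psBCK_2)$ symmetrically yields $z\le x\rs z$.

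Item (3) is then immediate: $(psBCK_1)$ reads as $x\ra y\le(y\ra z)\rs(x\ra z)$, so the hypothesis $x\le y$ makes the left side $1$, forcing the right side to be $1$ as well, which gives $y\ra z\le x\ra z$; the $\rs$-case is symmetric via $(psBCK_2)$.

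The main obstacle is the exchange identities (1)--(2), which I would tackle by establishing both directions of $\le$ separately and then invoking antisymmetry. The strategy is to chain $(psBCK_1)$ and $(psBCK_2)$ with the absorption lemmas and the monotonicity of (3); the delicate point is to arrange the reductions so as not to invoke (4) or (5) circularly, since those have not yet been proved. Once (1)--(2) are in hand, item (5) follows by rewriting $(psBCK_1)$ in the form $z\ra x\le(x\ra y)\rs(z\ra y)$ and commuting the implications via (2) to recast the inequality as $x\ra y\le(z\ra x)\ra(z\ra y)$. Item (4) then drops out of (5) together with the order-definition: if $x\le y$ then $x\ra y=1$, and (5) gives $1\le(z\ra x)\ra(z\ra y)$, hence $z\ra x\le z\ra y$ (and analogously for $\rs$). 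Finally, for item (6) I would unfold $x\vee_1 y=(x\ra y)\rs y$ and prove the two inequalities: the direction $x\ra y\le((x\ra y)\rs y)\ra y$ by applying (2) with the reflexivity $(x\ra y)\rs(x\ra y)=1$ lemma, and the reverse direction from the absorption $y\le(x\ra y)\rs y$ together with the monotonicity of (3). The $\vee_2$-case is identical by symmetry.
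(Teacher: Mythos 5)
This proposition is not proved in the paper at all: it is quoted from \cite{Ior1}, so your attempt has to be judged against the standard derivation in the pseudo-BCK literature rather than against anything in the text. Your preliminaries, your proof of $(3)$, and your derivations of $(5)\Rightarrow(4)$ and of $(5)$ from $(1)$ are essentially sound (with the small caveat that $x\rs 1=1$ is not an axiom and must be derived, e.g.\ from $(psBCK_1)$ with $z:=1$ plus $(psBCK_3)$, before the ``$\rs$-symmetric'' absorption $z\le x\rs z$ can be read off $(psBCK_2)$).

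The genuine gap is exactly at the point you flag as ``the main obstacle'': for the exchange identities $(1)$--$(2)$ you give only a statement of intent, and the ingredients you list --- $(psBCK_1)$, $(psBCK_2)$, the absorption $y\le x\ra y$, and the antitonicity $(3)$ --- do not suffice. Concretely, the natural chain runs the wrong way: from $z\le x\ra z$ and monotonicity one gets $y\rs z\le y\rs(x\ra z)$ and hence $x\ra(y\rs z)\le x\ra\bigl(y\rs(x\ra z)\bigr)$, but the last term dominates $y\rs(x\ra z)$ rather than being dominated by it, so no comparison of $x\ra(y\rs z)$ with $y\rs(x\ra z)$ falls out. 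The standard proof needs tools you never introduce: the equivalence $x\ra y=1\Leftrightarrow x\rs y=1$ (itself a nontrivial consequence of the axioms, obtained via $x\rs\bigl((x\ra y)\rs y\bigr)=1$, and not something you may simply read off the definition of $\le$), transitivity of $\le$, the inequalities $x\le(x\ra y)\rs y$ and $x\le(x\rs y)\ra y$ (which are \emph{not} your absorption lemmas), and from these the adjunction $x\le y\ra z\Leftrightarrow y\le x\rs z$; only then do $(5)$ and the two halves of the exchange identity follow by residuation and antisymmetry. Note also that $(4)$ does not need this detour at all: $(psBCK_1)$ with the substitution $a:=z$, $b:=x$, $c:=y$ gives $(z\ra x)\rs\bigl((x\ra y)\rs(z\ra y)\bigr)=1$, which for $x\le y$ collapses directly to $z\ra x\le z\ra y$. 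Finally, in $(6)$ the reverse inequality $\bigl((x\ra y)\rs y\bigr)\ra y\le x\ra y$ requires $x\le(x\ra y)\rs y$ together with $(3)$, not $y\le(x\ra y)\rs y$ as you wrote; the inequality you cite only yields the vacuous $x\vee_1 y\ra y\le y\ra y=1$.
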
        
                         
Pseudo-BE algebras were introduced in \cite{Bor2} as generalizations of BE-algebras and properties of these 
structures have recently been studied in \cite{Rez1} and \cite{Bor4}. 

\begin{Def} \label{psBE-30} $\rm($\cite{Bor2}$\rm)$
A \emph{pseudo-BE algebra} is an algebra $(A, \rightarrow, \rightsquigarrow, 1)$ of the type $(2, 2, 0)$ 
such that the following axioms are fulfilled for all $x, y, z\in A$: \\
$(psBE_1)$ $x\rightarrow x=x\rightsquigarrow x=1,$ \\
$(psBE_2)$ $x\rightarrow 1=x\rightsquigarrow 1=1,$ \\
$(psBE_3)$ $1\rightarrow x=1\rightsquigarrow x=x,$ \\
$(psBE_4)$ $x\rightarrow (y\rightsquigarrow z)=y\rightsquigarrow (x\rightarrow z),$ \\
$(psBE_5)$ $x\rightarrow y=1$ iff $x\rightsquigarrow y=1$. 
\end{Def}

A pseudo-BE algebra is said to be \emph{proper} if it is not a BE-algebra. \\
In a pseudo-BE algebra $(A,\ra,\rs,1)$, one can define a binary relation $``\leq"$ by \\
$\hspace*{2cm}$ $x\leq y$ iff $x\ra y=1$ iff $x\rs y=1$, for all $x, y\in A$. \\
If $(A,\ra,\rs,1)$ is a pseudo-BE algebra satisfying $x\ra y=x\rs y$, for all $x, y\in A$, then it is a BE-algebra. \\
If there is an element $0$ of a pseudo-BE algebra $(A, \ra, \rs, 1)$, such that $0\le x$ 
(i.e. $0\rightarrow x=0\rightsquigarrow x=1$), for all $x\in A$, then the pseudo-BE algebra is said to be  
\emph{bounded} and it is denoted by $(A, \ra, \rs, 0, 1)$. 
In a bounded pseudo-BE algebra $(A, \ra, \rs, 0, 1)$ we define two negations: \\
$\hspace*{3cm}$ $x^{-}:= x\ra 0$, $x^{\sim}:= x\rs 0$, \\
for all $x\in A$. \\
Obviously $x^{-\sim}= x\vee_1 0$ and $x^{\sim-}= x\vee_2 0$. \\ 
If $A$ is a bounded pseudo-BE algebra we denote: \\ 
$\hspace*{2cm}$ $\Reg(A)=\{x\in A \mid x^{-\sim}=x^{\sim-}= x\}$, the set of all \emph{regular} elements of $A$, \\
$\hspace*{2cm}$ $\Den(A)=\{x\in A \mid x^{-\sim}=x^{\sim-}= 1\}$, the set of all \emph{dense} elements of $A$. \\ 
If $\Reg(A)=A$, then $A$ is said to be \emph{involutive}. 
If a bounded pseudo-BE algebra $A$ satisfies $x^{-\sim}=x^{\sim-}$ for all $x\in A$, then $A$ is called a \emph{good} pseudo-BE algebra. \\
Obviously, if $A$ is involutive, then $A$ is good and $\Den(A)=\{1\}$.   

\begin{prop} \label{psBE-40} $\rm($\cite{Bor2}$\rm)$ Let $(A,\ra,\rs,1)$ be a pseudo-BE algebra. 
Then the following hold for all $x, y, z\in A$: \\
$(1)$ $x\rightsquigarrow (y\rightarrow z)=y\rightarrow (x\rightsquigarrow z);$ \\
$(2)$ $x\rightarrow (y\rightsquigarrow x)=1$ and $x\rightsquigarrow (y\rightarrow x)=1;$ \\
$(3)$ $x\rightarrow (y\rightarrow x)=1$ and $x\rightsquigarrow (y\rightsquigarrow x)=1;$ \\
$(4)$ $x\rightarrow ((x\rightarrow y)\rightsquigarrow y)=1$ and 
      $x\rightsquigarrow ((x\rightsquigarrow y)\rightarrow y)=1$.  
\end{prop}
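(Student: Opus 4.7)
The plan is to treat the four parts in order, since each later part piggy-backs on the earlier ones, and every step is essentially a one-line manipulation using the pseudo-BE axioms.

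For (1), I would first observe that the identity $x\rs (y\ra z)=y\ra (x\rs z)$ is nothing but $(psBE_4)$ with the roles of $x$ and $y$ interchanged: $(psBE_4)$ reads $x\ra (y\rs z)=y\rs (x\ra z)$, and swapping the names of the two universally quantified variables gives exactly the claimed equation. So no real work is needed here.

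For (2), to get $x\ra (y\rs x)=1$ I would apply $(psBE_4)$ to slide the outer $x\ra$ past $y\rs$, obtaining $x\ra (y\rs x)=y\rs (x\ra x)$, and then collapse using $(psBE_1)$ and $(psBE_2)$: the inner $x\ra x$ is $1$, and $y\rs 1=1$. The companion identity $x\rs (y\ra x)=1$ is handled the same way but using part (1) instead of $(psBE_4)$: $x\rs (y\ra x)=y\ra (x\rs x)=y\ra 1=1$.

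For (3), I would exploit $(psBE_5)$ to trade an arrow for a squiggle. Since $x\rs (y\ra x)=1$ was just proved in (2), applying $(psBE_5)$ with $u:=x$ and $v:=y\ra x$ gives $x\ra (y\ra x)=1$. Symmetrically, $x\ra (y\rs x)=1$ from (2) combined with $(psBE_5)$ yields $x\rs (y\rs x)=1$.

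For (4), I would again apply $(psBE_4)$, this time reading it in the direction $x\ra ((x\ra y)\rs y)=(x\ra y)\rs (x\ra y)$, which equals $1$ by $(psBE_1)$. The dual identity $x\rs ((x\rs y)\ra y)=1$ follows the same way from part (1) and $(psBE_1)$. The only place requiring any thought at all is recognizing at the outset that (1) is simply a renaming of $(psBE_4)$; everything else is substitution into the axioms, so I do not anticipate any real obstacle.
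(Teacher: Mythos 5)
Your proposal is correct, and every step checks out against the axioms: part (1) is indeed just $(psBE_4)$ with the two bound variables renamed, parts (2) and (4) are direct substitutions into $(psBE_4)$ (and its renamed form) followed by $(psBE_1)$, $(psBE_2)$, and part (3) is exactly the intended use of $(psBE_5)$ to convert the identities of part (2). The paper itself cites this proposition from the literature without reproducing a proof, but your argument is the standard one and there is nothing to add.
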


We will refer to $(A,\ra,\rs,1)$ by its universe $A$. 

\begin{ex} \label{psBE-50} $\rm($\cite{Bor2}$\rm)$
Consider the set $A=\{a,b,c,1\}$ and the operations $\rightarrow,\rightsquigarrow$ given by the following tables:
\[
\hspace{10mm}
\begin{array}{c|ccccc}
\rightarrow & 1 & a & b & c \\ \hline
1 & 1 & a & b & c \\ 
a & 1 & 1 & 1 & 1 \\ 
b & 1 & a & 1 & c \\ 
c & 1 & b & 1 & 1
\end{array}
\hspace{10mm} 
\begin{array}{c|ccccc}
\rightsquigarrow & 1 & a & b & c \\ \hline
1 & 1 & a & b & c \\ 
a & 1 & 1 & 1 & 1 \\ 
b & 1 & c & 1 & c \\ 
c & 1 & c & 1 & 1
\end{array}
. 
\]
Then $(A,\rightarrow,\rightsquigarrow,1)$ is a pseudo-BE algebra. Moreover it is even a pseudo-BCK algebra. 
\end{ex}

\begin{ex} \label{psBE-50-10} $\rm($\cite{Bor4}$\rm)$
Consider the set $A=\{1,a,b,c,d,e\}$ and the operations $\rightarrow,\rightsquigarrow$ given by the following tables:
\[
\hspace{10mm}
\begin{array}{c|ccccccc}
\rightarrow & 1 & a & b & c & d & e \\ \hline
1 & 1 & a & b & c & d & e \\ 
a & 1 & 1 & c & c & d & 1 \\ 
b & 1 & a & 1 & 1 & 1 & e \\ 
c & 1 & a & 1 & 1 & 1 & e \\ 
d & 1 & a & 1 & 1 & 1 & e \\
e & 1 & a & d & d & d & 1 
\end{array}
\hspace{10mm} 
\begin{array}{c|ccccccc}
\rightsquigarrow & 1 & a & b & c & d & e \\ \hline
1 & 1 & a & b & c & d & e \\ 
a & 1 & 1 & b & c & d & 1 \\ 
b & 1 & a & 1 & 1 & 1 & e \\ 
c & 1 & a & 1 & 1 & 1 & e \\ 
d & 1 & a & 1 & 1 & 1 & e \\
e & 1 & a & c & c & d & 1
\end{array}
. 
\]
Then $(A,\rightarrow,\rightsquigarrow,1)$ is a pseudo-BE algebra. 
Since $b\rightarrow c=1$ and $c\rightarrow b=1$, but $b\ne c$, axiom $(psBCK_6)$ is not satisfied, hence $A$ is 
not a pseudo-BCK algebra.    
\end{ex}

\begin{ex} \label{psBE-50-10-10} $\rm($\cite{Ciu30}$\rm)$
Consider the structure $(A,\ra,\rs,1)$, where the operations $\ra$ and $\rs$ on $A=\{1,a,b,c,d,e\}$ 
are defined as follows:
\[
\begin{array}{c|cccccc}
\ra & 1 & a & b & c & d & e \\ \hline
1 & 1 & a & b & c & d & e \\
a & 1 & 1 & d & 1 & 1 & d \\
b & 1 & c & 1 & 1 & 1 & c \\
c & 1 & a & d & 1 & d & a \\
d & 1 & c & b & c & 1 & b \\
e & 1 & 1 & 1 & 1 & 1 & 1 
\end{array}
\hspace{10mm}
\begin{array}{c|cccccc}
\rs & 1 & a & b & c & d & e \\ \hline
1 & 1 & a & b & c & d & e \\
a & 1 & 1 & c & 1 & 1 & c \\
b & 1 & d & 1 & 1 & 1 & d \\
c & 1 & d & b & 1 & d & b \\
d & 1 & a & c & c & 1 & a \\
e & 1 & 1 & 1 & 1 & 1 & 1
\end{array}
\qquad\quad
\begin{picture}(50,-70)(0,30)
\drawline(0,25)(25,5)(50,25)(0,50)(0,25)(50,50)(50,25)
\drawline(0,50)(25,70)(50,50)
\put(0,25){\circle*{4}}
\put(25,5){\circle*{4}}
\put(50,25){\circle*{4}}
\put(0,50){\circle*{4}}
\put(50,50){\circle*{4}}
\put(25,70){\circle*{4}}
\put(22,-5){$e$}
\put(22,75){$1$}
\put(-11,22){$a$}
\put(-11,47){$c$}
\put(55,22){$b$}
\put(55,47){$d$}
\end{picture}
.
\]
Then $(A,\ra,\rs,e,1)$ is a bounded pseudo-BE algebra.
\end{ex}

\begin{prop} \label{psBE-60} $\rm($\cite{Ciu33}$\rm)$ Any pseudo-BCK algebra is a pseudo-BE algebra.
\end{prop}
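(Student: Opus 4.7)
The plan is to verify each of the five pseudo-BE axioms $(psBE_1)$--$(psBE_5)$ directly from the six pseudo-BCK axioms, in order of increasing difficulty. Two axioms come essentially for free: $(psBE_3)$ is the conjunction of $(psBCK_3)$ and $(psBCK_4)$, and $(psBE_4)$ is already recorded as Proposition \ref{psBE-20}(1), so it transfers automatically.

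For $(psBE_1)$, I would specialize $x = y = 1$ in $(psBCK_1)$ and collapse the resulting expression using $(psBCK_3)$ on the inner factors and $(psBCK_4)$ on the outer one; what survives is $z \rs z = 1$, and the symmetric substitution in $(psBCK_2)$ supplies $z \ra z = 1$. For $(psBE_2)$, the identity $x \ra 1 = 1$ is already $(psBCK_5)$, while $x \rs 1 = 1$ falls out of $(psBCK_1)$ with $z = 1$: the repeated use of $(psBCK_5)$ and $(psBCK_4)$ reduces that instance to $(x \ra y) \rs 1 = 1$ for all $x, y$, and then taking $x = 1$ and invoking $(psBCK_3)$ gives the desired $y \rs 1 = 1$.

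The genuinely delicate axiom is $(psBE_5)$, the cross-implication $x \ra y = 1 \iff x \rs y = 1$. My plan is to first harvest from $(psBCK_1)$ the auxiliary identity $y \rs [(y \ra z) \rs z] = 1$, obtained by specializing $x = 1$ and rewriting the two $1 \ra (\cdot)$ factors via $(psBCK_3)$. Assuming $x \ra y = 1$ and substituting $y := x$, $z := y$ into this identity, the inner factor becomes $1 \rs y = y$ by $(psBCK_4)$, so the whole expression collapses to $x \rs y = 1$. The converse direction is symmetric, using the dual identity $y \ra [(y \rs z) \ra z] = 1$ extracted from $(psBCK_2)$ in exactly the same way.

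The main obstacle will be $(psBE_5)$, where guessing the right specialization (the $x = 1$ instance of $(psBCK_1)$ and then the further substitution $y := x$, $z := y$) is the only non-mechanical step; everything else is a short chain of rewrites with the constant-$1$ laws $(psBCK_3)$--$(psBCK_5)$. Once all five axioms are verified, the statement follows immediately from Definition \ref{psBE-30}.
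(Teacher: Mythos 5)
Your proposal is correct. The paper itself offers no argument for this proposition --- it is simply cited from \cite{Ciu33} --- so there is nothing to compare against; what you have written is a complete derivation that fills in the omitted verification. Each step checks out: $(psBE_3)$ is indeed $(psBCK_3)$ plus $(psBCK_4)$; setting $x=y=1$ in $(psBCK_1)$ collapses, via $1\ra 1=1$, $1\ra z=z$ and $1\rs(\cdot)=(\cdot)$, to $z\rs z=1$, with the dual instance of $(psBCK_2)$ giving $z\ra z=1$; the $z=1$ instance of $(psBCK_1)$ together with $(psBCK_5)$ and $1\rs 1=1$ yields $(x\ra y)\rs 1=1$, whence $y\rs 1=1$ at $x=1$; and your auxiliary identity $y\rs[(y\ra z)\rs z]=1$ (the $x=1$ instance of $(psBCK_1)$), specialized and combined with the hypothesis $x\ra y=1$ and $(psBCK_4)$, gives exactly $x\rs y=1$, with the converse obtained symmetrically from $(psBCK_2)$. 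The one point worth flagging is that your treatment of $(psBE_4)$ is not a proof from the pseudo-BCK axioms but an appeal to Proposition \ref{psBE-20}$(1)$, which the paper imports from \cite{Ior1}; this is legitimate within the paper's framework, since that proposition is stated before the present one, but a fully self-contained argument would still owe a derivation of the exchange law, which is the only genuinely laborious part of the whole equivalence.
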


\begin{rem} \label{psBE-70}
A revised version of the notion of a \emph{pseudo-equality algebra} has recently been introduced in \cite{Dvu7} as 
an algebra $(A, \wedge, \thicksim, \backsim, 1)$ of the 
type $(2, 2, 2, 0)$ such that the following axioms are fulfilled for all $x, y, z\in A$: \\
$(A_1)$ $(A, \wedge, 1)$ is a meet-semilattice with top element $1,$ \\
$(A_2)$ $x \thicksim x = x \backsim x = 1,$ \\ 
$(A_3)$ $x \thicksim 1 = 1\backsim x = x,$ \\
$(A_4)$ $x \le y \le z$ implies  
$x \thicksim z \le y \thicksim z$, $x \thicksim z \le x \thicksim y$, $z \backsim x \le z \backsim y$ and  
$z \backsim x \le y \backsim x,$ \\ 
$(A_5)$ $x \thicksim y \le (x \wedge z) \thicksim (y \wedge z)$  and 
        $x \backsim y \le (x \wedge z) \backsim (y \wedge z),$ \\  
$(A_6)$ $x \thicksim y \le (z \thicksim x) \backsim (z \thicksim y)$ and 
        $x \backsim y \le (x \backsim z) \thicksim(y \backsim z),$ \\
$(A_7)$ $x \thicksim y \le (x \thicksim z) \thicksim (y \thicksim z)$ and 
        $x \backsim y \le (z \backsim x) \backsim (z \backsim y)$. \\
It was proved in \cite{Dvu7} that the structure $(A, \ra, \rs, 1)$, where: \\
$\hspace*{2cm}$ $x \ra y = (x\wedge y) \thicksim x$ and $x \rs y = x \backsim (x \wedge y)$ \\ 
is a pseudo-BCK algebra. According to Proposition \ref{psBE-60}, $(A, \ra, \rs, 1)$ is a pseudo-BE algebra. 
\end{rem}

\begin{Def} \label{psBE-70-10} A pseudo BE-algebra with $(A)$ condition or a pseudo BE(A)-algebra
for short, is a pseudo BE-algebra $(A, \rightarrow, \rightsquigarrow, 1)$ such that the operations $\rightarrow$, $\rightsquigarrow$ are antitone in the first variable, that is $(A)$ condition is satisfied:\\
$(A)$ if $x, y\in A$ such that $x\le y$, then: \\
$\hspace*{3cm}$ $y\rightarrow z\le x\rightarrow z$ and $y\rightsquigarrow z\le x\rightsquigarrow z,$ \\
for all $z\in A$. 
\end{Def}

\begin{ex} \label{psBE-70-30}
Let $A=\{1,a,b,c,d\}$. Define the operations $\rightarrow$ and $\rightsquigarrow$ on $A$ as follows:
\begin{eqnarray*}
\begin{array}{c|ccccc} \rightarrow & 1 & a & b & c & d\\
 \hline 1 & 1 & a & b & c & d\\
a & 1 & 1 & c & c & 1 \\
b & 1 & d & 1 & 1 & d\\
c & 1 & d & 1 & 1 & d \\
d & 1 & 1 & c & c & 1
\end{array}
\hspace{2cm}
\begin{array}{c|ccccc} \rightsquigarrow & 1 & a & b & c & d \\
\hline 1 & 1 & a & b & c  & d \\
a & 1 & 1 & b & c & 1 \\
b & 1 & d & 1 & 1 & d \\
c & 1 & d & 1 & 1 & d \\
d & 1 & 1 & b & c & 1
\end{array}
\end{eqnarray*}
Then $(A, \rightarrow, \rightsquigarrow, 1)$ is a pseudo-BE(A) algebra. \\
Since $b\rightarrow c=1$ and $c\rightarrow b=1$, but $b\ne c$, axiom $(psBCK_6)$ is not satisfied, hence $A$ is 
not a pseudo-BCK algebra.  
\end{ex}

\begin{prop} \label{psBE-70-40} Let $(A, \rightarrow, \rightsquigarrow, 1)$  be a pseudo BE(A)-algebra and $x, y\in A$
such that $x\le y$. Then $(x\rightarrow z)\rightsquigarrow z \le (y\rightarrow z)\rightsquigarrow z$ and 
$(x\rightsquigarrow z)\rightarrow z \le (y\rightsquigarrow z)\rightarrow z$,
for all $z\in A$. 
\end{prop}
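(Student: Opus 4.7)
The plan is to prove both inequalities by applying the antitonicity condition $(A)$ twice in succession. The point is that $(A)$ reverses the order when we plug a larger element into the first argument of $\to$ or $\rs$, and doing this twice reverses the reversal, yielding a monotonicity statement for the nested expression.

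First, I would start from the hypothesis $x \le y$ and apply condition $(A)$ to the implication $\to$: this immediately gives $y\to z \le x\to z$. Now I would treat $y\to z$ and $x\to z$ as new elements of $A$ with $y\to z \le x\to z$, and apply $(A)$ again, this time to the operation $\rs$, obtaining $(x\to z)\rs z \le (y\to z)\rs z$, which is the first desired inequality.

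For the second inequality I would argue entirely symmetrically: from $x \le y$, condition $(A)$ applied to $\rs$ gives $y\rs z \le x\rs z$; then a second application of $(A)$ to $\to$ with these elements in the first variable produces $(x\rs z)\to z \le (y\rs z)\to z$.

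The only subtle point — which is really not an obstacle, just a thing to state carefully — is that condition $(A)$ requires antitonicity with respect to \emph{any} element in the second slot, so in the second step one is free to choose the second argument to be $z$ itself; no additional axiom of the pseudo-BE(A) algebra is invoked. No use of $(psBE_1)$--$(psBE_5)$ beyond condition $(A)$ is needed, so the proof is essentially a two-line chain of inequalities.
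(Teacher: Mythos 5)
Your proof is correct: applying condition $(A)$ once to get $y\to z\le x\to z$ (resp. $y\rs z\le x\rs z$) and then a second time in the first variable of the other arrow yields exactly the claimed inequalities. The paper dismisses this proposition as "straightforward" without giving details, and your two-step application of $(A)$ is precisely the intended argument.
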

\begin{proof} It is straightforward.
\end {proof}

\begin{Def} \label{psBE-70-50} A pseudo BE-algebra $A$ is said to be \emph{distributive} if it satisfies 
the following condition: \\
$\hspace*{3cm}$ $x\ra(y\rs z)=(x\ra y)\rs (x\ra z)$, \\
for all $x, y, z\in A$. 
\end{Def}

\begin{ex} \label{psBE-70-60} $\rm($\cite{Bor4}$\rm)$  The pseudo-BE algebras from Examples \ref{psBE-50-10} and 
\ref{psBE-70-30} are distributive. 
\end{ex}

A subset $D$ of a pseudo-BE algebra $A$ is called a \emph{deductive system} of $A$ if it satisfies 
the following axioms: \\
$(ds_1)$ $1\in D,$ \\
$(ds_2)$ $x\in D$ and $x\ra y\in D$ imply $y\in D$. \\
A subset $D$ of $A$ is a deductive system if and only if it satisfies $(ds_1)$ and the axiom: \\
$(ds^{'}_2)$ $x\in D$ and $x\rs y\in D$ imply $y\in D$. \\
Denote by ${\mathcal DS}(A)$ the set of all deductive systems of $A$. \\
A deductive system $D$ of $A$ is \emph{proper} if $D\ne A$. \\
A \emph{maximal} deductive system is a proper deductive system such that it is not included in any other proper deductive system. 
Denote by ${\mathcal DS_m}(A)$ the set of all maximal deductive systems of $A$. \\
A deductive system $D$ of a pseudo-BE algebra $A$ is said to be \emph{normal} if it satisfies the condition:\\
$(ds_3)$ for all $x, y \in A$, $x \ra y \in D$ iff $x \rs y \in D$. \\
Denote by ${\mathcal DS_n}(A)$ the set of all normal deductive systems of $A$. \\
For details regarding deductive systems and congruence relations on a pseudo-BE algebra we refer the reader to \cite{Bor2}, \cite{Rez1}. 

\begin{prop} \label{psBE-70-70} $\rm($\cite{Bor4}$\rm)$ If $A$ is a distributive pseudo-BE algebra, then 
${\mathcal DS}(A)={\mathcal DS_n}(A)$. 
\end{prop}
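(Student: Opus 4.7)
My plan is to show that in a distributive pseudo-BE algebra $A$, for every $x,y\in A$ the elements $x\ra y$ and $x\rs y$ are preorder-equivalent, so that any deductive system $D$ automatically satisfies the normality condition $(ds_3)$.

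The first step is to extract from distributivity the identity $x\ra(x\rs y)=x\ra y$. Substituting $a=b=x$, $c=y$ into $a\ra(b\rs c)=(a\ra b)\rs(a\ra c)$ and using $(psBE_1)$ and $(psBE_3)$ gives $x\ra(x\rs y)=(x\ra x)\rs(x\ra y)=1\rs(x\ra y)=x\ra y$; then by $(psBE_4)$ we also have $x\rs(x\ra y)=x\ra y$. Using this, I prove $x\rs y\le x\ra y$: by $(psBE_4)$ with $a=x$, $b=x\rs y$, $c=y$ we get $(x\rs y)\rs(x\ra y)=x\ra((x\rs y)\rs y)$, and expanding the right-hand side by distributivity and the identity just established gives $(x\ra(x\rs y))\rs(x\ra y)=(x\ra y)\rs(x\ra y)=1$. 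Thus $(x\rs y)\rs(x\ra y)=1$, and by $(psBE_5)$ also $(x\rs y)\ra(x\ra y)=1$.

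For the converse direction $(x\ra y)\ra(x\rs y)=1$, I apply the step-two inequality $u\rs v\le u\ra v$ with $u=x\ra y$ and $v=y$ to obtain $(x\ra y)\rs y\le(x\ra y)\ra y$. Combined with $x\le(x\ra y)\rs y$ from Proposition \ref{psBE-40}(4) and transitivity of $\le$ (which holds in any distributive pseudo-BE algebra, since distributivity forces $\ra$ to be isotone in the second variable), this yields $x\le(x\ra y)\ra y$. By $(psBE_5)$ then $x\rs((x\ra y)\ra y)=1$, and Proposition \ref{psBE-40}(1) applied with $a=x$, $b=x\ra y$, $c=y$ rewrites this as $(x\ra y)\ra(x\rs y)=1$; by $(psBE_5)$ also $(x\ra y)\rs(x\rs y)=1$.

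The main conceptual hurdle is recognising that the converse inequality can be obtained by applying the step-two result to the shifted pair $(x\ra y,y)$ rather than $(x,y)$ and chaining it with Proposition \ref{psBE-40}(4) via transitivity, rather than by attempting a symmetric dualisation (which is unavailable because distributivity is not self-dual under swapping $\ra$ and $\rs$). Once both preorder inequalities hold, normality of $D$ follows at once: if $x\ra y\in D$ then $(ds_2)$ applied to $x\ra y\in D$ and $(x\ra y)\ra(x\rs y)=1\in D$ gives $x\rs y\in D$, and the converse is symmetric. Hence $D$ satisfies $(ds_3)$, proving $\mathcal{DS}(A)=\mathcal{DS}_n(A)$.
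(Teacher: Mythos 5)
Your proof is correct. The paper itself gives no argument for this proposition --- it is quoted from \cite{Bor4} --- so the relevant comparison is with the route the paper borrows elsewhere: in the proof of Theorem \ref{s-psBE-40} it invokes \cite[Th.\ 7]{Bor4} ($x\ra y\le(z\ra x)\rs(z\rs y)$ and $x\rs y\le(z\ra x)\ra(z\ra y)$) and specializes $z:=1$ to get $x\ra y\le x\rs y\le x\ra y$, from which normality of every deductive system is immediate. You reach the same two inequalities but derive them directly from the distributivity identity, which makes the argument self-contained. Your individual steps all check out: the identity $x\ra(x\rs y)=x\ra y$ follows from distributivity with $(psBE_1)$, $(psBE_3)$; the inequality $x\rs y\le x\ra y$ then falls out of $(psBE_4)$ plus one more application of distributivity; and for the converse you correctly recognize that the definition of distributivity is not symmetric in $\ra$ and $\rs$, so you cannot just dualize, and instead chain $x\le(x\ra y)\rs y\le(x\ra y)\ra y$ and rewrite via Proposition \ref{psBE-40}(1). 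The one point that genuinely needed justification --- transitivity of $\le$, which fails in general pseudo-BE algebras --- you handle properly: distributivity applied to $u\rs v=1$ gives $(w\ra u)\rs(w\ra v)=w\ra 1=1$, so $\ra$ is isotone in the second variable, and transitivity follows by evaluating at $w\ra u=1$ with $(psBE_3)$. The final deduction of $(ds_3)$ from $(ds_2)$ and the two unit equations is routine and correct.
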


\begin{Def} \label{psBE-80} Let $A$ be a pseudo-BE algebra. Then $P\in {\mathcal DS}(A)$ is called \emph{prime} 
if and only if, for all $D_1, D_2\in {\mathcal DS}(A)$ such that $D_1\cap D_2 \subseteq P$, we have 
$D_1\subseteq P$ or $D_2\subseteq P$. 
The set of all proper prime deductive systems of $A$ is denoted by ${\mathcal DS_p}(A)$ and called the 
\emph{prime spectrum} of $A$.
\end{Def}

\begin{ex} \label{psBE-90} Let $(A, \rightarrow, \rightsquigarrow, 1)$ be the pseudo-BE algebra from 
Example \ref{psBE-50}. Then ${\mathcal DS}(A)=\{\{1\},\{1,b\},A\}$,  
${\mathcal DS_p}(A)={\mathcal DS_m}(A)=\{\{1,b\}\}$.
\end{ex}

Let $A, B$ be two pseudo-BE algebras. A map $f: A\longrightarrow B$ is called a \emph{pseudo-BE homomorphism} 
if $f(x\ra y)=f(x)\ra f(y)$ and $f(x\rs y)=f(x)\rs f(y)$, for all $x, y\in A$. \\
If $B=A$, then $f$ is called a \emph{pseudo-BE endomorphism}. \\
One can easily check that, if $f$ is a pseudo-BE homomorphism, then: \\
$(1)$ $f(1)=1;$ \\
$(2)$ $x\le y$ implies $f(x)\le f(y)$. \\
(We use the same notations for the operations in both pseudo-BE algebras, but the reader must be aware that they are different). \\
Denote $\Ker(f)=\{x\in A \mid f(x)=1\}$. 

\begin{prop} \label{psBE-90-10} $\rm($\cite[Th. 14]{Bor2}$\rm)$ 
Let $f: A\longrightarrow B$ be a \emph{pseudo-BE homomorphism}. Then: \\
$(1)$ if $E\in {\mathcal DS}(B)$, then $f^{-1}(E)\in {\mathcal DS}(A);$ \\
$(2)$ if $f$ s surjective and $D\in {\mathcal DS}(A)$ such that $\Ker(f)\subseteq D$, then 
$f(D)\in {\mathcal DS}(B)$. 
\end{prop}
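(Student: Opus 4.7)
For part (1), the plan is to verify the two axioms $(ds_1)$ and $(ds_2)$ directly for $f^{-1}(E)$. The axiom $(ds_1)$ follows immediately from the homomorphism property $f(1)=1$, which ensures $1 \in f^{-1}(E)$ since $1 \in E$. For $(ds_2)$, I would take $x \in f^{-1}(E)$ and $x \ra y \in f^{-1}(E)$; applying $f$ and the homomorphism identity gives $f(x) \in E$ and $f(x) \ra f(y) = f(x \ra y) \in E$, so modus ponens inside the deductive system $E$ of $B$ yields $f(y) \in E$, i.e. $y \in f^{-1}(E)$.

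For part (2), I would again verify $(ds_1)$ and $(ds_2)$, this time for $f(D)$. Since $1 \in D$ and $f(1)=1$, we have $1 \in f(D)$. For $(ds_2)$, suppose $u \in f(D)$ and $u \ra v \in f(D)$. Pick witnesses $x \in D$ with $f(x) = u$ and $z \in D$ with $f(z) = u \ra v$; by surjectivity of $f$, also pick $y \in A$ with $f(y) = v$. The goal is to upgrade $y$ to an element of $D$, which will give $v = f(y) \in f(D)$.

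The key maneuver is to compare $z$ with $x \ra y$. Computing $f(x \ra y) = f(x) \ra f(y) = u \ra v = f(z)$ and then $f(z \ra (x \ra y)) = f(z) \ra f(x \ra y) = 1$ shows $z \ra (x \ra y) \in \Ker(f)$. Using the hypothesis $\Ker(f) \subseteq D$, we obtain $z \ra (x \ra y) \in D$; combining this with $z \in D$ via $(ds_2)$ for $D$ gives $x \ra y \in D$, and a second application with $x \in D$ gives $y \in D$, as required.

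The argument is largely routine diagram chasing; the only non-trivial step is the one in part (2) that requires producing an element of $D$ whose image is $v$, and this is precisely where both hypotheses (surjectivity of $f$ and $\Ker(f) \subseteq D$) are genuinely used. If one dropped the surjectivity, one would not have a preimage $y$ to feed back into $D$, and if one dropped the kernel condition, one could not bridge from $f(x \ra y) = f(z)$ to $x \ra y \in D$. So the main conceptual point of the plan is to notice that $x \ra y$ and $z$ become equal under $f$ and to use $\Ker(f) \subseteq D$ exactly to transport this equality back into $D$.
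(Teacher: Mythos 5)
Your proof is correct; the argument for (1) is the standard preimage check, and in (2) the key step of observing $f(z\ra(x\ra y))=f(z)\ra f(z)=1$ and transporting this through $\Ker(f)\subseteq D$ is exactly what is needed. The paper itself gives no proof (it cites the result from \cite[Th.~14]{Bor2}), but your argument matches the standard one for that theorem, and it suffices to verify only $(ds_1)$ and $(ds_2)$ as you do, since the $\rightsquigarrow$-version $(ds_2')$ is an equivalent formulation.
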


Let $A$ be a distributive pseudo-BE algebra. 
Given $H\in {\mathcal DS}(A)$, the relation $\Theta_H$ on $A$ defined by $(x,y)\in \Theta_H$ iff 
$x\rightarrow y\in H$ and $y\rightarrow x\in H$ is a congruence on $A$. 
We write $x/H=[x]_{\Theta_H}$ for every $x\in A$ and we have $H=[1]_{\Theta_H}$.  
Then $(A/\Theta_H,\ra,[1]_{\Theta_H})=(A/\Theta_H,\rs,[1]_{\Theta_H})$ is a 
BE-algebra called \emph{quotient pseudo-BE algebra via $H$} and denoted by $A/H$ (see \cite{Rez1}). \\
The function $\pi_H: A \longrightarrow A/H$ defined by $\pi_H(x)=x/H$ for any $x\in A$ is a surjective homomorphism which is called the \emph{canonical projection} from $A$ to $A/H$. \\
One can easily prove that $\Ker(\pi_H)=H$. \\

The proofs of the following two results are straightforward. 

\begin{lemma} \label{psBE-100} In any pseudo-BE algebra $A$ the following hold for all $x, y\in A:$ \\ 
$(1)$ $x\vee_1 1=1\vee_1 x=1$ and $x\vee_2 1=1\vee_2 x=1;$ \\
$(2)$ $x\le y$ implies $x\vee_1 y=x\vee_2 y=y;$ \\
$(3)$ $x\vee_1 x=x\vee_2 x=x;$ \\
$(4)$ $x\vee_1 y$ and $x\vee_2 y$ are uper bounds of $\{x, y\}$. 
\end{lemma}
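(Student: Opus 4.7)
The plan is to verify each clause of Lemma \ref{psBE-100} by direct application of the pseudo-BE axioms $(psBE_1)$--$(psBE_5)$ and the preparatory Proposition \ref{psBE-40}; no new ideas are needed, which is why the authors classify it as straightforward.

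For part $(1)$, I would expand the definitions: $x\vee_1 1=(x\ra 1)\rs 1$ collapses via $(psBE_2)$ to $1\rs 1$ and then by $(psBE_1)$ to $1$, while $1\vee_1 x=(1\ra x)\rs x$ collapses via $(psBE_3)$ to $x\rs x=1$. The analogous two identities for $\vee_2$ follow the same pattern, only exchanging the roles of $\ra$ and $\rs$.

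For $(2)$, the hypothesis $x\le y$ unfolds, by the definition of the partial order in a pseudo-BE algebra together with $(psBE_5)$, to $x\ra y=x\rs y=1$. Substituting into the definitions and applying $(psBE_3)$ yields $x\vee_1 y=1\rs y=y$ and $x\vee_2 y=1\ra y=y$. Part $(3)$ is the special case of the same computation: $x\vee_1 x=(x\ra x)\rs x=1\rs x=x$ by $(psBE_1)$ and $(psBE_3)$, and symmetrically for $\vee_2$.

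The only part that requires invoking the preparatory results rather than the raw axioms is $(4)$. To show $x\le x\vee_1 y$, I would appeal to Proposition \ref{psBE-40}(4), which gives $x\ra((x\ra y)\rs y)=1$, i.e. $x\le (x\ra y)\rs y=x\vee_1 y$. To show $y\le x\vee_1 y$, I would use Proposition \ref{psBE-40}(2) with $z:=x\ra y$ to obtain $y\ra((x\ra y)\rs y)=1$, giving $y\le x\vee_1 y$. The corresponding bounds for $x\vee_2 y=(x\rs y)\ra y$ are obtained by the dual instances: the identity $x\rs((x\rs y)\ra y)=1$ from Proposition \ref{psBE-40}(4) yields $x\le x\vee_2 y$, and the identity $y\rs((x\rs y)\ra y)=1$ from Proposition \ref{psBE-40}(2) yields $y\le x\vee_2 y$. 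The main (very mild) obstacle is simply selecting the correct clauses of Proposition \ref{psBE-40}; once those are in place the inequalities are immediate.
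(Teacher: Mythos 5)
Your proof is correct and is exactly the routine verification the paper has in mind (the paper omits the proof, stating only that it is straightforward): parts (1)--(3) unfold the definitions of $\vee_1,\vee_2$ and apply $(psBE_1)$--$(psBE_3)$, $(psBE_5)$, and part (4) is precisely Proposition \ref{psBE-40}(4) for $x\le x\vee_1 y$, $x\le x\vee_2 y$ together with the appropriate instances of Proposition \ref{psBE-40}(2) for $y\le x\vee_1 y$, $y\le x\vee_2 y$. The only cosmetic slip is the phrase ``with $z:=x\ra y$'' when citing Proposition \ref{psBE-40}(2), whose statement has no variable $z$; what you actually mean (substituting $y\mapsto x\ra y$, $x\mapsto y$ in $x\ra(y\rs x)=1$) is the correct instance.
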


\begin{lemma} \label{psBE-110} In any pseudo-BE(A) algebra $A$ the following holds for all $x, y\in A:$ \\ 
$\hspace*{2cm}$ $x_1\vee_1 y\le x_2\vee_1 y$ and $x_1\vee_2 y\le x_2\vee_2 y$, \\
whenever $x_1, x_2\in A$, $x_1\le x_2$.  
\end{lemma}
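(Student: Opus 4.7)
The plan is to recognize that the lemma is essentially an immediate consequence of Proposition~\ref{psBE-70-40}, applied with the third variable of that proposition identified with the second. Concretely, expanding the definitions from Section~2, the required inequalities $x_1\vee_1 y\le x_2\vee_1 y$ and $x_1\vee_2 y\le x_2\vee_2 y$ unfold to $(x_1\ra y)\rs y\le (x_2\ra y)\rs y$ and $(x_1\rs y)\ra y\le (x_2\rs y)\ra y$, which are exactly the two inequalities in Proposition~\ref{psBE-70-40} with the ``$z$'' of that proposition specialized to $y$.

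If one prefers to give a direct argument rather than cite Proposition~\ref{psBE-70-40}, the same proof can be written as a double invocation of the $(A)$ condition. Starting from the hypothesis $x_1\le x_2$, antitonicity of $\ra$ and $\rs$ in the first variable yields $x_2\ra y\le x_1\ra y$ and $x_2\rs y\le x_1\rs y$. Applying $(A)$ a second time, now using the inequality $x_2\ra y\le x_1\ra y$ in the role of the hypothesis and $\rs y$ in the role of the outer operation, gives $(x_1\ra y)\rs y\le (x_2\ra y)\rs y$, which is the first desired inequality. Swapping the roles of $\ra$ and $\rs$ throughout produces the second inequality.

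There is no genuine obstacle; the only bookkeeping point is that the direction of the inequality reverses twice, so the net effect is the monotonicity of $\vee_1$ and $\vee_2$ in the first variable. This is precisely why the $(A)$ condition is needed: in a general pseudo-BE algebra there is no reason for $\ra$ or $\rs$ to be antitone in the first argument, and hence no reason for $\vee_1,\vee_2$ to be monotone. The statement of the lemma is therefore sharp with respect to the hypothesis, and its proof is a two-line application of Definition~\ref{psBE-70-10}.
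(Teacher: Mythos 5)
Your proof is correct and is exactly the intended argument: the paper omits the proof, declaring it straightforward, and the "straightforward" route is precisely your double application of the $(A)$ condition (equivalently, the observation that the lemma is Proposition~\ref{psBE-70-40} rewritten in terms of $\vee_1,\vee_2$ with $z:=y$). Both the reduction and the sign-tracking through the two antitone applications check out.
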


$\vspace*{5mm}$

\section{States, measures and internal states on pseudo-BE algebras}

The Bosbach states and state-morphisms on pseudo-BE algebras were defined and studied in \cite{Rez6}. 
In this section we recall some results on states and state-morphisms on pseudo-BE algebras. 
We generalize to the pseudo-BE algebras the notions of measures, state-measures and measure-morphisms 
investigated in \cite{Ciu25} for the case of pseudo-BCK algebras. It is shown that any measure morphism 
on a pseudo-BE algebra is a measure, and the kernel of a measure is a normal deductive system. 
If $A$ is a bounded pseudo-BE(A) algebra, then we prove that there is a one-to-one correspondence between the set 
of all Bosbach states $s$ on $A$ with $s(0)=0$ and the set of all state-measures on $A$.
The notions of internal states and state-morphism operators on pseudo-BCK algebras are extended to the case of pseudo-BE algebras and some of their properties are investigated. 
We prove that any type II state operator on a pseudo-BE algebra is a state-morphism operator on it. 

\begin{Def} \label{s-psBE-10}
A \emph{Bosbach state} on a pseudo-BE algebra $(A,\ra,\rs,1)$ is a function $s: A\longrightarrow [0, 1]$ 
such that the following axioms hold for all $x, y\in A:$ \\
$(bs_1)$ $s(1)=1;$ \\
$(bs_2)$ $s(x)+s(x\ra y)=s(y)+s(y\ra x);$ \\
$(bs_3)$ $s(x)+s(x\rs y)=s(y)+s(y\rs x)$. 
\end{Def} 

Denote by $\mathcal{BS}(A)$ the set of all states on $A$. 

\begin{prop} \label{s-psBE-20} $\rm($\cite[Lemma 3.4]{Rez6}$\rm)$ 
Let $s\in \mathcal{BS}(A)$. Then the following hold for all $x, y\in A$: \\
$(1)$ $s(y\ra x)=1+s(x)-s(y) =s(y\rs x)$ and $s(x)\le s(y)$ whenever $x\le y;$ \\
$(2)$ $s(x\vee_1 y) = s(y \vee_1 x)$ and $s(x\vee_2 y) = s(y \vee_2 x).$ 
\end{prop}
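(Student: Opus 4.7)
The plan is to treat the two assertions separately, deriving (1) by a direct specialization of the Bosbach axioms under the hypothesis $x\le y$, and deriving (2) by applying $(bs_2)$ and $(bs_3)$ to carefully chosen arguments and then collapsing one term via $(psBE_4)$.

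For part (1), I would note that $x\le y$ gives $x\ra y = x\rs y = 1$, hence $s(x\ra y)=s(x\rs y)=1$. Substituting into $(bs_2)$ yields $s(x)+1 = s(y)+s(y\ra x)$, so $s(y\ra x)=1+s(x)-s(y)$; the same argument with $(bs_3)$ delivers $s(y\rs x)=1+s(x)-s(y)$. Since $s$ is $[0,1]$-valued, the left-hand sides are bounded by $1$, which forces $s(x)\le s(y)$.

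For part (2), the key observation is that $(psBE_4)$ produces a trivial term in the Bosbach equation when one of its arguments has the form $x\ra y$ or $x\rs y$. Explicitly, applying $(bs_3)$ to $x\ra y$ and $y$ gives
$$s(x\ra y)+s\bigl((x\ra y)\rs y\bigr) = s(y)+s\bigl(y\rs (x\ra y)\bigr),$$
and $(psBE_4)$ simplifies $y\rs (x\ra y)=x\ra(y\rs y)=x\ra 1=1$. Hence $s(x\vee_1 y)=s(y)+1-s(x\ra y)$. Interchanging the roles of $x$ and $y$ in the same calculation yields $s(y\vee_1 x)=s(x)+1-s(y\ra x)$. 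Equality of these two expressions is then exactly the statement of $(bs_2)$. The identity for $\vee_2$ is obtained by the symmetric computation, now starting from $(bs_2)$ applied to $x\rs y$ and $y$, and reducing the spurious term $y\ra(x\rs y)$ to $1$ by the same use of $(psBE_4)$; the final equality follows from $(bs_3)$.

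I do not anticipate a real obstacle: every step is a direct substitution. The only point requiring attention is the bookkeeping for part (2), namely recognizing that $(psBE_4)$ eliminates one of the four terms in the Bosbach relation, leaving a formula linear in the $s$-values of $x$, $y$, and $x\ra y$ (respectively $x\rs y$), after which $(bs_2)$ or $(bs_3)$ closes the argument.
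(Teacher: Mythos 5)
Your proof is correct: part (1) is the direct specialization of $(bs_2)$, $(bs_3)$ at $x\le y$ together with the bound $s(y\ra x)\le 1$, and part (2) correctly uses $(psBE_4)$ to kill the term $s(y\rs(x\ra y))$ (resp.\ $s(y\ra(x\rs y))$) and then closes with $(bs_2)$ (resp.\ $(bs_3)$). The paper itself gives no proof, only the citation to \cite[Lemma 3.4]{Rez6}, and your argument is the standard one (it is in effect the computation underlying Proposition \ref{s-psBE-20-10}$(c)$), so there is nothing to add.
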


\begin{prop} \label{s-psBE-20-10} $\rm($\cite[Prop. 3.5]{Rez6}$\rm)$ 
Let $s: A\longrightarrow [0, 1]$ be a function on $A$ such that $s(1)=1$. Then the following are equivalent: \\
$(a)$ $s\in \mathcal{BS}(A);$ \\
$(b)$ $x\leq y$ implies $s(y\ra x)=1+ s(x)- s(y)=s(y\rs x);$ \\
$(c)$ $s(x\ra y)=1+s(y)-s(x\vee_1 y)$ and $s(x\rs y)=1+s(y)-s(x\vee_2 y)$. \\
for all $x, y\in A.$
\end{prop}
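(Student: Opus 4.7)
The plan is to set up the cyclic chain of implications $(a) \Rightarrow (b) \Rightarrow (c) \Rightarrow (a)$. The pivotal observation throughout is that $y \le x \vee_1 y$ and $y \le x \vee_2 y$ by Lemma \ref{psBE-100}(4), which is exactly what links the unrestricted identities in (c) to the restricted hypothesis $x \le y$ in (b).

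The implication $(a) \Rightarrow (b)$ is essentially already recorded in Proposition \ref{s-psBE-20}(1): if $x \le y$ then $x \ra y = x \rs y = 1$, so substituting into $(bs_2)$ and $(bs_3)$ yields $s(y \ra x) = 1 + s(x) - s(y) = s(y \rs x)$.

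For $(b) \Rightarrow (c)$, I would apply (b) to the inequality $y \le x \vee_1 y$ to obtain
\[
s((x \vee_1 y) \ra y) \;=\; 1 + s(y) - s(x \vee_1 y).
\]
The key auxiliary identity is $(x \vee_1 y) \ra y = x \ra y$: the inequality $x \ra y \le (x \vee_1 y) \ra y$ follows by setting $a := x \ra y$ in Proposition \ref{psBE-40}(4) (second half), which gives $a \le (a \rs y) \ra y = (x \vee_1 y) \ra y$; the reverse inequality is obtained from $x \le x \vee_1 y$ after unfolding $x \vee_1 y = (x \ra y) \rs y$ and using $(psBE_4)$. The analogous argument, with $\rs$ in place of $\ra$ and $\vee_2$ in place of $\vee_1$, produces the second equality of (c).

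For $(c) \Rightarrow (a)$, I would substitute (c) into both sides of $(bs_2)$:
\[
s(x) + s(x \ra y) = s(x) + 1 + s(y) - s(x \vee_1 y), \qquad s(y) + s(y \ra x) = s(y) + 1 + s(x) - s(y \vee_1 x),
\]
which reduces $(bs_2)$ to the symmetry $s(x \vee_1 y) = s(y \vee_1 x)$. To produce this symmetry, apply (c) with $u = x \ra y$, $v = y$ in the $\rs$-version, together with the direct unfolding $(x \ra y) \vee_2 y = (x \vee_1 y) \ra y$, to express $s(x \vee_1 y)$ in terms of $s((x \vee_1 y) \ra y)$, which can itself be rewritten via (c); iterating the same procedure starting from $y \vee_1 x$ yields the mirror expression, and matching them forces $s(x \vee_1 y) = s(y \vee_1 x)$. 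The parallel argument with $\rs$ and $\vee_2$ delivers $(bs_3)$, completing the cycle.

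The main obstacle is the $\ra$/$\vee_1$ identity $(x \vee_1 y) \ra y = x \ra y$ (and its $\rs$/$\vee_2$ counterpart) used in $(b) \Rightarrow (c)$: for pseudo-BCK algebras it is Proposition \ref{psBE-20}(6), but in the wider pseudo-BE setting it must be extracted from $(psBE_4)$ and Proposition \ref{psBE-40}(4) alone. Once this identity is in hand, the remaining manipulations are straightforward substitutions, and monotonicity of $s$ (which is needed implicitly) follows automatically from either (b) or (c) via the constraint $s \in [0,1]$.
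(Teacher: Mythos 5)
The paper gives no proof here (it cites \cite[Prop.~3.5]{Rez6}), so I can only judge your argument on its own terms, and it has a genuine gap in the step $(b)\Rightarrow(c)$. The identity $(x\vee_1 y)\ra y=x\ra y$ that you rely on is simply \emph{false} in a general pseudo-BE algebra: in Example \ref{psBE-50-10} take $x=a$, $y=b$; then $a\ra b=c$, so $a\vee_1 b=c\rs b=1$ and $(a\vee_1 b)\ra b=1\ra b=b\neq c=a\ra b$. It cannot be ``extracted from $(psBE_4)$ and Proposition \ref{psBE-40}(4) alone'': those give only the one inequality $x\ra y\le (x\vee_1 y)\ra y$; the reverse inequality is where condition (A), transitivity of $\le$, or commutativity enters (it is Proposition \ref{psBE-20}(6) precisely because pseudo-BCK algebras have $(psBCK_1)$), and even a two-sided inequality would not yield equality of elements, since $\le$ need not be antisymmetric in a pseudo-BE algebra (again $b\le c\le b$ with $b\ne c$ in that example). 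The detour through $y\le x\vee_1 y$ is in fact unnecessary: apply $(b)$ to the pair $y\le x\ra y$ (Proposition \ref{psBE-40}(3)) to get $s((x\ra y)\rs y)=1+s(y)-s(x\ra y)$, which is literally $s(x\vee_1 y)=1+s(y)-s(x\ra y)$ by the definition $x\vee_1 y=(x\ra y)\rs y$; the $\vee_2$ half comes from $y\le x\rs y$ the same way. Note this uses \emph{both} equalities packaged in $(b)$.

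The step $(c)\Rightarrow(a)$ is also not yet a proof. Your reduction of $(bs_2)$ to the symmetry $s(x\vee_1 y)=s(y\vee_1 x)$ is correct, but the proposed derivation of that symmetry does not close: carrying out your recipe (apply the $\rs$-version of $(c)$ to the pair $(x\ra y,\,y)$, unfold $(x\ra y)\vee_2 y=(x\vee_1 y)\ra y$, then apply the $\ra$-version again) yields $s(x\vee_1 y)=s\bigl((x\vee_1 y)\vee_1 y\bigr)$, and the ``mirror'' computation yields $s(y\vee_1 x)=s\bigl((y\vee_1 x)\vee_1 x\bigr)$. These are statements about two different elements, and nothing forces the two right-hand sides to agree, so ``matching them'' establishes nothing. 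You need an actual argument producing $s(x\vee_1 y)=s(y\vee_1 x)$ and $s(x\vee_2 y)=s(y\vee_2 x)$ from $(c)$ (or a different route from $(c)$ back to $(a)$, e.g.\ via $(b)$); as written, this direction is asserted rather than proved.
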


\begin{prop} \label{s-psBE-30} $\rm($\cite[Prop. 3.10]{Rez6}$\rm)$ Let $A$ be a pseudo-BE(A) algebra and let 
$s\in \mathcal{BS}(A)$. 
Then the following hold for all $x, y\in A$: \\
$(1)$ $s(x\vee_1 y)=s(x\vee_2 y);$ \\
$(2)$ $s(x\ra y)=s(x\rs y)$. 
\end{prop}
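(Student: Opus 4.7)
The plan is to prove (2) first and derive (1) as an immediate corollary: by Proposition \ref{s-psBE-20-10}(c), $s(x\ra y)=1+s(y)-s(x\vee_1 y)$ and $s(x\rs y)=1+s(y)-s(x\vee_2 y)$, so once the equality $s(x\ra y)=s(x\rs y)$ is in hand, $s(x\vee_1 y)=s(x\vee_2 y)$ is forced.

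To prove (2), my plan is to show $s(x\ra y)\le s(x\rs y)$; the reverse inequality then follows from the mirror-image argument obtained by swapping $\ra$ with $\rs$ throughout and using $x\vee_2 y=(x\rs y)\ra y$ in place of $x\vee_1 y$. The starting observation is that $x\vee_1 y=(x\ra y)\rs y$ is an upper bound of both $x$ and $y$: the bound $x\le x\vee_1 y$ is Proposition \ref{psBE-40}(4), and $y\le x\vee_1 y$ comes from a one-line use of $(psBE_4)$:  $y\ra((x\ra y)\rs y)=(x\ra y)\rs(y\ra y)=1$. The (A) condition applied to $x\le x\vee_1 y$ then yields the two inequalities $(x\vee_1 y)\ra y\le x\ra y$ and $(x\vee_1 y)\rs y\le x\rs y$, which are the workhorses of the argument.

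The central step is to establish $s\bigl((x\vee_1 y)\ra y\bigr)=s(x\ra y)$. The direction $\le$ is the first (A)-inequality above combined with monotonicity of $s$ (Proposition \ref{s-psBE-20}(1)). For the reverse, setting $u:=x\ra y$, Proposition \ref{psBE-40}(4) gives $u\rs((u\rs y)\ra y)=1$, and by $(psBE_5)$ this is equivalent to $u\ra((u\rs y)\ra y)=1$, i.e.\ $x\ra y\le(x\vee_1 y)\ra y$; monotonicity of $s$ then closes this direction. Combining this equality with the fact that $y\le x\vee_1 y$ allows Proposition \ref{s-psBE-20}(1) to identify $s\bigl((x\vee_1 y)\ra y\bigr)$ with $s\bigl((x\vee_1 y)\rs y\bigr)$; together with the second (A)-inequality above, this produces the chain
$$
s(x\ra y)=s\bigl((x\vee_1 y)\ra y\bigr)=s\bigl((x\vee_1 y)\rs y\bigr)\le s(x\rs y),
$$
which gives (2).

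The conceptual subtlety I expect to be the main obstacle is that on a pseudo-BE algebra $\le$ is only a preorder, so the pair of inequalities $(x\vee_1 y)\ra y\le x\ra y$ and $x\ra y\le(x\vee_1 y)\ra y$ does \emph{not} in general yield equality of elements; the argument only delivers equality of their $s$-values. Recognising that the Bosbach-state monotonicity is exactly what is needed, and that the (A) condition is the precise instrument for bridging $\ra$ and $\rs$ through the common upper bound $x\vee_1 y$, is the heart of the proof.
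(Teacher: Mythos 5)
Your argument is correct. The paper itself gives no proof of this proposition (it is quoted from \cite[Prop.\ 3.10]{Rez6}), but your sandwich argument --- using the (A) condition on $x\le x\vee_1 y$ for one inequality and $x\ra y\le (x\vee_1 y)\ra y$ from Proposition \ref{psBE-40}(4) for the other, then switching arrows via $y\le x\vee_1 y$ and Proposition \ref{s-psBE-20}(1) --- is exactly the technique the paper uses for the measure analogue in Proposition \ref{m-psBE-20-10}, and every step checks out.
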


If $(A, \ra, \rs, 0, 1)$ is a bounded pseudo-BE algebra, then we denote 
$\mathcal{BS}_1(A)=\{s\in \mathcal{BS}(A)\mid s(0)=0\}$.

\begin{prop} \label{s-psBE-30-10} $\rm($\cite[Prop. 3.16]{Rez6}$\rm)$ Let $A$ be a bounded pseudo-BE algebra 
and let $s\in \mathcal{BS}_1(A)$. Then the following hold for all $x, y\in A$: \\
$(1)$ $s(x^{-})=s(x^{\sim})=1-s(x);$ \\
$(2)$ $s(x^{-\sim})=s(x^{\sim-})=s(x)$.  
\end{prop}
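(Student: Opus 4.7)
The plan is to obtain both items as direct consequences of Proposition \ref{s-psBE-20}(1), using that $0$ is the least element of the bounded pseudo-BE algebra together with the extra normalization $s(0)=0$.

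For (1), first observe that $0\le x$ holds for every $x\in A$. Proposition \ref{s-psBE-20}(1) applied with the pair $(0,x)$ (playing the role of $(x,y)$ there) gives
\[
s(x\ra 0)=1+s(0)-s(x)=s(x\rs 0).
\]
Substituting the hypothesis $s(0)=0$ and recalling the definitions $x^{-}=x\ra 0$ and $x^{\sim}=x\rs 0$ yields at once $s(x^{-})=s(x^{\sim})=1-s(x)$.

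For (2), I would just iterate (1). Since $x^{-\sim}=(x^{-})\rs 0=(x^{-})^{\sim}$ and $x^{\sim-}=(x^{\sim})^{-}$, applying (1) to $x^{-}$ and to $x^{\sim}$ gives
\[
s(x^{-\sim})=1-s(x^{-})=1-(1-s(x))=s(x),
\]
and analogously $s(x^{\sim-})=1-s(x^{\sim})=s(x)$.

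There is no real obstacle here: the whole content is that the hypothesis $s(0)=0$ makes the Bosbach-state identity collapse to the elementary arithmetic relation $s(x^{-})=1-s(x)$, after which (2) is an immediate double application. The only thing to check while writing up is that Proposition \ref{s-psBE-20}(1) is indeed applicable with $y=x$ and the ``smaller'' element taken to be $0$, which is legitimate precisely because $A$ is bounded.
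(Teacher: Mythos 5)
Your argument is correct: applying Proposition \ref{s-psBE-20}$(1)$ to the pair $0\le x$ together with $s(0)=0$ gives $(1)$, and iterating it on $x^{-}$ and $x^{\sim}$ gives $(2)$. The paper itself states this result without proof (citing \cite[Prop.\ 3.16]{Rez6}), and your derivation is the standard one, so there is nothing to add.
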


Let $s\in \mathcal{BS}(A)$ and define $\Ker(s)=\{x\in A \mid s(x)=1\}$,
called the \emph{kernel} of $s$. Then $\Ker(s)\in {\mathcal DS}(A)$ (\cite[Prop. 3.8]{Rez6}). \\
If $A$ is distributive or $A$ is a pseudo-BE(A) algebra, then $\Ker(s)\in {\mathcal DS_n}(A)$ 
(\cite[Th. 9]{Bor4}). 

\begin{theo} \label{s-psBE-40} Let $A$ be a distributive pseudo-BE(A) algebra, $s\in \mathcal{BS}(A)$ and 
$K=\Ker(s)$. Then $(A/K,\ra,1/K)=(A/K,\rs,1/K)$ and $x/K\vee y/K=y/K\vee x/K$, 
for all $x, y\in A$. 
\end{theo}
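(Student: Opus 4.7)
First, the equality $(A/K,\rightarrow,1/K)=(A/K,\rightsquigarrow,1/K)$ will follow immediately from the preliminaries: distributivity of $A$ and Proposition~\ref{psBE-70-70} give $\mathcal{DS}(A)=\mathcal{DS}_n(A)$, so $K=\Ker(s)$ is a normal deductive system, and the quotient construction recalled right after Proposition~\ref{psBE-90-10} then produces a BE-algebra on $A/K$, which is exactly the assertion that $\rightarrow$ and $\rightsquigarrow$ agree on classes.

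Second, for the commutativity $x/K\vee y/K=y/K\vee x/K$ I will reduce everything to a single state-membership. In the BE-algebra $A/K$ the induced join is $[x]\vee[y]=(x\vee_1 y)/K$, so the identity is equivalent to $(x\vee_1 y)/K=(y\vee_1 x)/K$; by the definition of $\Theta_K$ and the symmetry of the roles of $x,y$, it suffices to prove
\[
s\bigl((x\vee_1 y)\rightarrow(y\vee_1 x)\bigr)=1.
\]
Expanding $y\vee_1 x=(y\rightarrow x)\rightsquigarrow x$ and applying axiom $(psBE_4)$ rewrites this target as $(y\rightarrow x)\rightsquigarrow\bigl((x\vee_1 y)\rightarrow x\bigr)$. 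Setting $P=y\rightarrow x$ and $Q=(x\vee_1 y)\rightarrow x$, the pseudo-BE(A) axiom combined with $y\le x\vee_1 y$ gives $Q\le P$, hence $Q\rightsquigarrow P=1$ and $Q\vee_2 P=(Q\rightsquigarrow P)\rightarrow P=P$ literally. Proposition~\ref{s-psBE-20}(2) then yields $s(P\vee_2 Q)=s(Q\vee_2 P)=s(P)$, and the $\rightsquigarrow$-form of Proposition~\ref{s-psBE-20-10}(c) collapses the target to $1+s(Q)-s(P)$. So it remains to verify $s(P)=s(Q)$. A second application of Proposition~\ref{s-psBE-20-10}(c) converts this into $s(y\vee_1 x)=s\bigl((x\vee_1 y)\vee_1 x\bigr)$, which I close by combining Proposition~\ref{s-psBE-20}(2) with the literal identity $x\vee_1(x\vee_1 y)=x\vee_1 y$ (a consequence of $x\le x\vee_1 y$ and the very definition of $\vee_1$), giving $s\bigl((x\vee_1 y)\vee_1 x\bigr)=s\bigl(x\vee_1(x\vee_1 y)\bigr)=s(x\vee_1 y)=s(y\vee_1 x)$.

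The only real obstacle is spotting the right sequence of rewrites: the decisive moves are using $(psBE_4)$ to swap the outer $\rightarrow$ past the inner $\rightsquigarrow$, and then noticing that pseudo-BE(A) antitonicity produces a comparable pair $Q\le P$ under which the outer $\vee_2$ degenerates literally. Once this is in place, the rest is routine bookkeeping with the Bosbach-state identities of Propositions~\ref{s-psBE-20} and~\ref{s-psBE-20-10}.
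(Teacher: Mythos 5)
Your proof is correct, but it takes a genuinely different route from the paper on the main point. For the identification $(A/K,\ra,1/K)=(A/K,\rs,1/K)$ you invoke the quotient construction from the preliminaries (via normality of $K$), whereas the paper derives it from the inequality $x\ra y\le (z\ra x)\rs(z\rs y)$ of \cite[Th. 7]{Bor4} with $z:=1$, showing $s((x\ra y)\ra(x\rs y))=s((x\rs y)\ra(x\ra y))=1$; both are legitimate. The real divergence is the commutativity $x/K\vee y/K=y/K\vee x/K$: the paper simply cites \cite[Th. 3.15]{Rez6} for $x/K\vee_1 y/K=y/K\vee_1 x/K$ and does no computation, while you prove it from scratch by establishing $s\bigl((x\vee_1 y)\ra(y\vee_1 x)\bigr)=1$. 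Your chain checks out: $(psBE_4)$ turns the target into $s(P\rs Q)$ with $P=y\ra x$, $Q=(x\vee_1 y)\ra x$; condition (A) applied to $y\le x\vee_1 y$ gives $Q\le P$, so $Q\vee_2 P=P$ literally and Propositions \ref{s-psBE-20}(2) and \ref{s-psBE-20-10}(c) reduce everything to $s(P)=s(Q)$, which follows from $s\bigl((x\vee_1 y)\vee_1 x\bigr)=s\bigl(x\vee_1(x\vee_1 y)\bigr)=s(x\vee_1 y)=s(y\vee_1 x)$ using Lemma \ref{psBE-100}(2). What your approach buys is a self-contained argument that makes explicit where the state axioms and the (A) condition are actually used; what the paper's citation buys is brevity. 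The only cosmetic caveat is that the induced join on the BE-algebra $A/K$ should be identified with $(x\vee_1 y)/K$ (which you do), and this is consistent since $\ra$ and $\rs$ agree on classes.
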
 
\begin{proof}
According to \cite[Th. 3.15]{Rez6}, $(A/K,\ra,\rs,1/K)$ is a pseudo-BE algebra such that: \\ 
$\hspace*{2cm}$ $x/K\vee_1 y/K=y/K\vee_1 x/K$ and $x/K\vee_2 y/K=y/K\vee_2 x/K$, \\ 
for all $x, y\in A$. \\
By \cite[Th. 7]{Bor4}, $x\ra y\le (z\ra x)\rs (z\rs y)$ and $x\rs y\le (z\ra x)\ra (z\ra y)$. \\ 
Taking $z:=1$ we get $x\ra y\le x\rs y$ and $x\rs y\le x\ra y$, that is \\ 
$\hspace*{2cm}$ $(x\ra y)\ra (x\rs y)=1$ and $(x\rs y)\ra (x\ra y)=1$. \\
If follows that \\ 
$\hspace*{2cm}$ $s((x\ra y)\ra (x\rs y))=1$ and $s((x\rs y)\ra (x\ra y))=1$, \\ 
hence $(x\ra y) \Theta_K (x\rs y)$. Similarly $(y\ra x) \Theta_K (y\rs x)$. \\
Thus $(x\ra y)/K=(x\rs y)/K$. \\
We conclude that $x/K\vee y/K=y/K\vee x/K$. 
\end{proof}

\begin{ex} \label{s-psBE-50}
Let $A$ be the pseudo-BE(A) algebra from Example \ref{psBE-70-30}. \\
Define $s^1_{\alpha}, s^2_{\alpha}, s^3_{\alpha,\beta}, s^4: A\longrightarrow [0, 1]$ by: \\
$\hspace*{2cm}$
$s^1_{\alpha}(1)=s^1_{\alpha}(a)=s^1_{\alpha}(d)=1, s^1_{\alpha}(b)=s^1_{\alpha}(c)=\alpha;$ \\ 
$\hspace*{2cm}$
$s^2_{\alpha}(1)=s^2_{\alpha}(b)=s^2_{\alpha}(c)=1, s^2_{\alpha}(a)=s^2_{\alpha}(d)=\alpha;$ \\ 
$\hspace*{2cm}$
$s^3_{\alpha,\beta}(1)=1, s^3_{\alpha,\beta}(a)=s^3_{\alpha,\beta}(d)=\alpha,  
s^3_{\alpha,\beta}(b)=s^3_{\alpha,\beta}(c)=\beta;$ \\ 
$\hspace*{2cm}$
$s^4(1)=s^4(a)=s^4(b)=s^4(c)=s^4(d)=1$. \\ 
Then we have: \\ 
$\hspace*{2cm}$
$\mathcal{BS}(A)=\{s^1_{\alpha} \mid \alpha \in [0, 1)\} \cup
                     \{s^2_{\alpha} \mid \alpha \in [0, 1)\} \cup $                                   
        $\{s^3_{\alpha,\beta} \mid \alpha,\beta\in [0, 1)\} \cup \{s^4\}$. 
\end{ex}

Consider the real interval $[0,1]$ of reals equipped with the \L ukasiewicz implication
$\ra_{\mbox{\tiny \L}}$ defined by
$$x \ra_{\mbox{\tiny \L}} y = \min\{1-x+y,1\}, \ {\rm for \ all} \ x,y \in [0,1].$$

\begin{Def}\label{sm-psBE-10} Let $(A, \ra, \ra, 1)$ be a pseudo BE-algebra. A \emph{state-morphism} on
$A$ is a function $s : A\longrightarrow [0,1]$ such that:\\
$(sm)$ $s(x \ra y) = s(x \rs y)=s(x) \ra_{\mbox{\tiny \L}} s(y),$ for all $x,y \in X.$
\end{Def} 

Denote by $\mathcal{SM}(A)$ the set of all state-morphisms on $A$. 

\begin{prop} \label{sm-psBE-20} $\rm($\cite[Prop. 5.3]{Rez6}$\rm)$ $\mathcal{SM}(A)\subseteq \mathcal{BS}(A)$. 
\end{prop}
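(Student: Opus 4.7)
The plan is to unpack the definition of a state-morphism and verify each of the three Bosbach state axioms directly, relying on a simple arithmetic identity for the \L ukasiewicz implication on $[0,1]$.

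The key observation is that for all $a,b\in[0,1]$ one has the symmetric identity
\[
a + \min\{1-a+b,1\} = b + \min\{1-b+a,1\},
\]
i.e., $a + (a\ra_{\mbox{\tiny \L}} b) = b + (b\ra_{\mbox{\tiny \L}} a)$. This follows by splitting on whether $a\le b$ or $a\ge b$: in the first case the left-hand side equals $a+1$ while the right-hand side equals $b+(1-b+a)=1+a$, and the second case is symmetric. I would prove this small lemma first (or simply note it).

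Next I would verify $(bs_1)$: by $(psBE_1)$ we have $1\ra 1=1$, so $s(1)=s(1\ra 1)=s(1)\ra_{\mbox{\tiny \L}} s(1)=\min\{1,1\}=1$. Then for $(bs_2)$, apply the definition of state-morphism to both implications appearing: $s(x\ra y)=s(x)\ra_{\mbox{\tiny \L}} s(y)$ and $s(y\ra x)=s(y)\ra_{\mbox{\tiny \L}} s(x)$. Substituting into $(bs_2)$ reduces the required equality to the arithmetic identity above with $a=s(x)$, $b=s(y)$. Axiom $(bs_3)$ is verified identically using $s(x\rs y)=s(x)\ra_{\mbox{\tiny \L}} s(y)$ and $s(y\rs x)=s(y)\ra_{\mbox{\tiny \L}} s(x)$.

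There is no real obstacle here: the proof is essentially a translation, since the definition $(sm)$ forces the values of $s$ on implications to be governed entirely by the \L ukasiewicz operation on $[0,1]$, and $[0,1]$ equipped with $\ra_{\mbox{\tiny \L}}$ is itself a Bosbach state over itself. The only thing to be careful about is to use both parts of $(sm)$ (for $\ra$ and for $\rs$) so that $(bs_2)$ and $(bs_3)$ are both covered, and to include the tiny verification of $s(1)=1$.
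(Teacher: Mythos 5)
Your proposal is correct. The paper does not prove this proposition itself but only cites it to \cite[Prop.\ 5.3]{Rez6}; your direct verification --- checking $(bs_1)$ from $s(1)=s(1\ra 1)=s(1)\ra_{\mbox{\tiny \L}} s(1)=1$ and reducing $(bs_2)$, $(bs_3)$ to the symmetry identity $a+(a\ra_{\mbox{\tiny \L}} b)=b+(b\ra_{\mbox{\tiny \L}} a)$ on $[0,1]$ --- is the standard argument and is complete.
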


\begin{prop} \label{sm-psBE-30} $\rm($\cite[Prop. 5.4]{Rez6}$\rm)$ 
Let $A$ be a pseudo-BE(A) algebra and let $s\in \mathcal{BS}(A)$. 
Then $s\in \mathcal{SM}(A)$ if and only if:
$$s(x\vee_1 y)=\max\{s(x),s(y)\}$$ for all $x,y \in A,$ or equivalently,
$$s(x\vee_2 y)=\max\{s(x),s(y)\}$$ for all $x,y \in A.$
\end{prop}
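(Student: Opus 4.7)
The plan is to reduce both conditions to the formula for $s(x\ra y)$ and $s(x\rs y)$ given in Proposition \ref{s-psBE-20-10}(c), and then use Proposition \ref{s-psBE-30} to transfer between $\vee_1$ and $\vee_2$.

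First I would observe that the \L ukasiewicz implication can be rewritten on $[0,1]$ as
\[
a \ra_{\mbox{\tiny \L}} b = \min\{1-a+b,1\} = 1 + b - \max\{a,b\},
\]
as one checks by splitting into the cases $a\le b$ and $a>b$. Applied with $a=s(x)$, $b=s(y)$, this gives
\[
s(x) \ra_{\mbox{\tiny \L}} s(y) = 1 + s(y) - \max\{s(x),s(y)\}.
\]

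Next, for any Bosbach state $s$, Proposition \ref{s-psBE-20-10}(c) yields
\[
s(x \ra y) = 1 + s(y) - s(x \vee_1 y), \qquad s(x \rs y) = 1 + s(y) - s(x \vee_2 y).
\]
Comparing these two displays termwise, one sees immediately that
\[
s(x\ra y) = s(x)\ra_{\mbox{\tiny \L}} s(y) \ \Longleftrightarrow\ s(x\vee_1 y)=\max\{s(x),s(y)\},
\]
and similarly the equation $s(x\rs y) = s(x)\ra_{\mbox{\tiny \L}} s(y)$ is equivalent to $s(x\vee_2 y)=\max\{s(x),s(y)\}$. Since $A$ is a pseudo-BE(A) algebra, Proposition \ref{s-psBE-30}(2) provides $s(x\ra y)=s(x\rs y)$, so both implications $\ra$ and $\rs$ are captured together by either one of the two characterizing identities.

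Finally, for the equivalence of the $\vee_1$-condition with the $\vee_2$-condition, I would invoke Proposition \ref{s-psBE-30}(1), which on a pseudo-BE(A) algebra gives $s(x\vee_1 y)=s(x\vee_2 y)$ for every Bosbach state $s$; hence one of the two equalities holds iff the other does. There is no real obstacle here — the proof is a purely algebraic manipulation — the only subtle point is being careful to ensure that the state-morphism definition requires \emph{both} $s(x\ra y)$ and $s(x\rs y)$ to equal $s(x)\ra_{\mbox{\tiny \L}} s(y)$, which is why the hypothesis (A) is needed (to use Proposition \ref{s-psBE-30} and reduce this to a single $\vee_i$-identity).
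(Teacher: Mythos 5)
Your argument is correct and complete. Note that the paper itself gives no proof of this proposition (it is quoted from \cite[Prop.\ 5.4]{Rez6}), so there is nothing to compare against line by line; but your derivation is exactly the natural one: rewriting $a\ra_{\mbox{\tiny \L}} b=1+b-\max\{a,b\}$, matching it against the formulas $s(x\ra y)=1+s(y)-s(x\vee_1 y)$ and $s(x\rs y)=1+s(y)-s(x\vee_2 y)$ from Proposition \ref{s-psBE-20-10}(c), and then using Proposition \ref{s-psBE-30} (which is where the (A) hypothesis enters) to see that either single identity $s(x\vee_i y)=\max\{s(x),s(y)\}$ already forces both halves of the state-morphism condition. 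Your closing remark correctly identifies the only delicate point, namely that condition $(sm)$ demands both $s(x\ra y)$ and $s(x\rs y)$ equal $s(x)\ra_{\mbox{\tiny \L}} s(y)$.
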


\begin{ex} \label{sm-psBE-40} 
Consider again the pseudo-BE(A) algebra $A$ from Example \ref{psBE-70-30}. \\
With the notations from Example \ref{s-psBE-50}, we have: \\
$\hspace*{2cm}$
$\mathcal{SM}(A)=\{s^1_{\alpha} \mid \alpha\in {\mathbb R}, \alpha \in [0, 1)\} \cup
                     \{s^2_{\alpha} \mid \alpha\in {\mathbb R}, \alpha \in [0, 1)\} \cup \{s^4\}$. \\ 
Obviously $\mathcal{SM}(A)\subseteq \mathcal{BS}(A)$. \\
Let $s\in \{s^3_{\alpha,\beta} \mid \alpha,\beta\in [0, 1)\}$. 
Then we have $s(a\vee_1 b)=s(a\vee_2 b)=s(1)=1$, while $\max\{s(a),s(b)\}=\max\{\alpha, \beta\}<1$. 
Hence $s$ does not satisfy the conditions from Proposition \ref{sm-psBE-30}. 
\end{ex}

\begin{Def} \label{m-psBE-10} Let $(A,\ra,\rs,1)$ be a pseudo-BE algebra.
A mapping $m:A\longrightarrow [0, \infty)$ such that for all $x,y \in A$, \\
$(1)$ $m(x\ra y)=m(x\rs y)=m(y)-m(x)$ whenever $y\leq x$ is said to be a \emph{measure};\\
$(2)$ if $A$ is bounded, and $m$ is a measure with $m(0)=1$, then $m$ is said to be a \emph{state-measure};\\
$(3)$ if $m(x\ra y)=m(x\rs y)=\max\{0,m(y)-m(x)\}$ is said to be a \emph{measure-morphism};\\
$(4)$ if $A$ is bounded, and $m$ is a measure-morphism with $m(0)=1$, then $m$ is said to be a \emph{state-measure-morphism}.
\end{Def}

Denote by: \\
$\hspace*{2cm}$ $\mathcal{M}(A)$ the set of all measures on $A;$ \\
$\hspace*{2cm}$ $\mathcal{MS}(A)$ the set of all state-measures on $A;$ \\
$\hspace*{2cm}$ $\mathcal{MM}(A)$ the set of all measure-morphisms on $A;$ \\
$\hspace*{2cm}$ $\mathcal{MMS}(A)$ the set of all state-measure-morphisms on $A$. 
 
\begin{prop} \label{m-psBE-20} Let $m\in \mathcal{M}(A)$. For all $x,y \in A$, we have: \\
$(1)$ $m(1)=0;$ \\
$(2)$ $m(x)\geq m(y)$ whenever $x\leq y;$  \\
$(3)$ $m(x\vee_1 y) = m(y\vee_1 x)$ and $m(x\vee_2 y) = m(y\vee_2 x);$ \\
$(4)$ $m(x\vee_1 y) = m(x\vee_2 y);$ \\
$(5)$ $m(x\ra y)=m(x\rs y).$
\end{prop}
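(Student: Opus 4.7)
The plan is: parts (1) and (2) are direct from the measure axiom. For (1), take $x=y=1$; since $1\le 1$ and $1\ra 1=1$, the axiom gives $m(1)=m(1\ra 1)=m(1)-m(1)=0$. For (2), assume $x\le y$; the axiom applied to the pair $(y,x)$ yields $m(y\ra x)=m(x)-m(y)\ge 0$, so $m(x)\ge m(y)$.

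For (3)--(5), I would first collect the comparabilities valid in every pseudo-BE algebra: $y\le x\ra y$ and $y\le x\rs y$ from Proposition \ref{psBE-40}(3), and $x,y\le x\vee_1 y$ and $x,y\le x\vee_2 y$ from Proposition \ref{psBE-40}(4) together with Lemma \ref{psBE-100}(4); for instance, $y\le (x\ra y)\rs y$ follows from $(psBE_4)$ as $y\ra((x\ra y)\rs y)=(x\ra y)\rs(y\ra y)=1$. Substituting these into the measure axiom produces the two central identities
\[
m(x\vee_1 y)=m(y)-m(x\ra y),\qquad m(x\vee_2 y)=m(y)-m(x\rs y),
\]
since the axiom equates $m(z\ra w)=m(z\rs w)$ whenever $w\le z$. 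In particular, (5) is equivalent to (4): $m(x\ra y)=m(x\rs y)$ if and only if $m(x\vee_1 y)=m(x\vee_2 y)$.

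For (4), my strategy is to show that $x\vee_1 y$ and $x\vee_2 y$ are $\le$-equivalent, so that (2) forces them to have equal measure; the two inequalities come from unravelling $(x\vee_1 y)\ra(x\vee_2 y)$ and its reverse using $(psBE_4)$ and Proposition \ref{psBE-40}(1), exploiting the already-known comparabilities $x\le x\vee_i y$. For (3), a parallel argument shows that $x\vee_1 y$ and $y\vee_1 x$ are $\le$-equivalent as common upper bounds of $\{x,y\}$, and similarly for $x\vee_2 y$ and $y\vee_2 x$, each equivalence yielding equality of measures via (2).

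The main obstacle is precisely these $\le$-equivalence steps. The expressions involved nest $\ra$ and $\rs$ to depth two or three --- for example $((x\ra y)\rs y)\ra((x\rs y)\ra y)$ and $((x\ra y)\rs y)\ra((y\ra x)\rs x)$ --- and reducing them to $1$ requires an interlocking choice of which of the two ``cross'' identities $(psBE_4)$ and Proposition \ref{psBE-40}(1) to apply at each rewrite. Once those equivalences are in hand, parts (3), (4), and (5) all follow from part (2) with no further work.
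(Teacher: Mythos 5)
Your parts (1) and (2) are correct, and so are your two ``central identities'' $m(x\vee_1 y)=m(y)-m(x\ra y)$ and $m(x\vee_2 y)=m(y)-m(x\rs y)$: since $y\le x\ra y$ and $y\le x\rs y$, the measure axiom applied to the pairs $(x\ra y,\,y)$ and $(x\rs y,\,y)$ gives exactly these, and with them (4) and (5) are indeed equivalent. (For the record, the paper offers no argument of its own for this proposition; it only cites \cite[Prop.\ 4.2]{Ciu25}, so the burden of proving (3)--(5) is entirely on you.)

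The genuine gap is the step you yourself call ``the main obstacle'': the $\le$-equivalences on which your proofs of (3) and (4) rest are simply false, so no amount of rewriting with $(psBE_4)$ will produce them. In Example \ref{psBE-50} (which is even a pseudo-BCK algebra) one has $a\vee_1 b=(a\ra b)\rs b=1\rs b=b$ but $b\vee_1 a=(b\ra a)\rs a=a\rs a=1$ with $1\not\le b$, so $x\vee_1 y$ and $y\vee_1 x$ need not be mutually comparable; likewise $c\vee_1 a=c$ while $c\vee_2 a=b$ with $b\not\le c$ (the very computation the paper uses in Example \ref{comm-psBE-10-10}), so $x\vee_1 y$ and $x\vee_2 y$ need not be mutually comparable either. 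Structurally this was doomed: in a pseudo-BCK algebra mutual comparability forces equality by $(psBCK_6)$, so your claim for (3) would make every pseudo-BCK algebra commutative, and your claim for (4) would force $x\vee_1 y=x\vee_2 y$ universally --- both known to fail outside the commutative case (Remark \ref{comm-psBE-45}). Items (3)--(5) genuinely equate the measures of distinct, incomparable elements, so a correct proof must exploit the two-sided content of the measure axiom (that $m(z\ra w)$ and $m(z\rs w)$ coincide whenever $w\le z$) on well-chosen pairs, in the spirit of Proposition \ref{m-psBE-20-10}, where an identity of measures is obtained from a one-sided order estimate in each direction plus monotonicity; monotonicity of $m$ alone, which is all your part (2) provides, cannot close the argument.
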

\begin{proof} Similarly as \cite[Prop. 4.2]{Ciu25} for the case of pseudo-BCK algebras.
\end{proof}

\begin{prop} \label{m-psBE-20-10} Let $A$ be a pseudo-BE(A) algebra and let $m\in \mathcal{M}(A)$. 
For all $x,y \in A$, we have: \\
$\hspace*{2cm}$ $m(x\vee_1 y\ra y)=m(x\ra y)$ and $m(x\vee_1 y\rs y)=m(x\rs y)$. 
\end{prop}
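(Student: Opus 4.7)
The plan is to reduce the first assertion $m(x\vee_1 y\ra y)=m(x\ra y)$ to a case already handled by Proposition \ref{m-psBE-20}, and then deduce the second assertion from the same proposition. The key step is a purely algebraic identity: unfolding the definitions of $\vee_1$ and $\vee_2$ gives
\[
x\vee_1 y\ra y \;=\; ((x\ra y)\rs y)\ra y \;=\; (x\ra y)\vee_2 y.
\]
So it suffices to prove $m((x\ra y)\vee_2 y)=m(x\ra y)$.

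For this, I would use Proposition \ref{psBE-40}(3): the identity $x\ra(y\ra x)=1$, specialized with the roles of $x$ and $y$ interchanged, yields $y\le x\ra y$. Lemma \ref{psBE-100}(2) then gives $y\vee_2(x\ra y)=x\ra y$, and combining this with the $\vee_2$-invariance $m(u\vee_2 v)=m(v\vee_2 u)$ from Proposition \ref{m-psBE-20}(3) produces
\[
m((x\ra y)\vee_2 y) \;=\; m(y\vee_2(x\ra y)) \;=\; m(x\ra y),
\]
which, together with the identity above, establishes the first equality.

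For the second equality $m(x\vee_1 y\rs y)=m(x\rs y)$, I would just invoke Proposition \ref{m-psBE-20}(5) twice: once on the left to rewrite $m(x\vee_1 y\rs y)=m(x\vee_1 y\ra y)$, and once on the right to rewrite $m(x\ra y)=m(x\rs y)$, whereupon the first half closes the loop.

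The only genuinely substantive step is noticing the rewriting $x\vee_1 y\ra y=(x\ra y)\vee_2 y$, which swaps $x$ for $x\ra y$ and turns $\vee_1$ into $\vee_2$; after that the argument is pure bookkeeping driven by the elementary fact $y\le x\ra y$ and the already-proved properties of $m$ in Proposition \ref{m-psBE-20}.
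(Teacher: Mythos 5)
Your proof is correct, but it follows a genuinely different route from the paper's. The paper argues by a two-sided inequality: from $x\le x\vee_1 y$ and the (A) condition it gets $x\vee_1 y\ra y\le x\ra y$, hence $m(x\vee_1 y\ra y)\ge m(x\ra y)$ by the order-reversal property of measures (Proposition \ref{m-psBE-20}(2)); the reverse inequality comes from $x\ra y\le ((x\ra y)\rs y)\ra y=x\vee_1 y\ra y$ (Proposition \ref{psBE-40}(4)), and the second identity is handled ``similarly''. You instead observe the purely definitional rewriting $x\vee_1 y\ra y=((x\ra y)\rs y)\ra y=(x\ra y)\vee_2 y$, reduce to $y\le x\ra y$ via Lemma \ref{psBE-100}(2), and close with the symmetry $m(u\vee_2 v)=m(v\vee_2 u)$ of Proposition \ref{m-psBE-20}(3); your second identity then falls out of Proposition \ref{m-psBE-20}(5) rather than a separate symmetric argument. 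Both arguments are sound. The paper's version makes its use of the (A) condition explicit and elementary (monotonicity plus one known inequality), while yours pushes all the work into the already-established invariance properties of $m$ under $\vee_2$-commutation and under $\ra/\rs$ exchange, which makes the second identity genuinely one line instead of a repeated argument; the trade-off is that your proof is only as hypothesis-free as Proposition \ref{m-psBE-20}(3) and (5) themselves, which the paper states for arbitrary pseudo-BE algebras but proves only by reference to the pseudo-BCK case --- harmless here, since the present proposition assumes the (A) condition anyway.
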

\begin{proof} 
Since $x\le x\vee_1 y$ we have $x\vee_1 y\ra y \le x\ra y$ and applying Proposition \ref{m-psBE-20}$(2)$ we get 
$m(x\vee_1 y\ra y)\ge m(x\ra y)$. 
On the other hand, $x\ra y\le ((x\ra y)\rs y)\ra y=x\vee_1 y\ra y$. Hence $m(x\ra y)\ge m(x\vee_1 y\ra y)$. 
It follows that $m(x\vee_1 y\ra y)=m(x\ra y)$. \\ 
Similarly $m(x\vee_1 y\rs y)=m(x\rs y)$. 
\end{proof}

\begin{prop} \label{m-psBE-30} Let $A$ be a pseudo-BE algebra. Then:  \\
$(1)$ $y\leq x$ implies $m(x\vee_1 y)=m((x\vee_2 y)=m(x)$ whenever $m\in \mathcal{M}(A);$ \\
$(2)$ if $m\in \mathcal{M}(A)$, then $\Ker_0(m)=\{x\in A \mid m(x)=0\} \in {\mathcal DS}_n(A);$ \\
$(3)$ $\mathcal{MM}(A\subseteq \mathcal{M}(A).$ 
\end{prop}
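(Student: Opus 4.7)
The plan is to establish the three assertions in turn, using only Lemma~\ref{psBE-100} and Proposition~\ref{m-psBE-20}, which have already been proved.

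For part~(1), I would reduce the statement to a one-line computation. Since $y\le x$, Lemma~\ref{psBE-100}(2) applied with the roles of the two variables interchanged gives $y\vee_1 x = y\vee_2 x = x$. Combining this with Proposition~\ref{m-psBE-20}(3) yields $m(x\vee_1 y) = m(y\vee_1 x) = m(x)$ and likewise $m(x\vee_2 y) = m(y\vee_2 x) = m(x)$.

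For part~(2), I have to verify three conditions for a normal deductive system. The axiom $(ds_1)$ is immediate from Proposition~\ref{m-psBE-20}(1), and the normality condition $(ds_3)$ is immediate from Proposition~\ref{m-psBE-20}(5). The main obstacle is the modus-ponens closure $(ds_2)$: assuming $m(x)=0$ and $m(x\ra y)=0$, I must prove $m(y)=0$. The plan is to exploit $x\vee_1 y = (x\ra y)\rs y$. Lemma~\ref{psBE-100}(4) gives $x\le x\vee_1 y$, so Proposition~\ref{m-psBE-20}(2) forces $m(x\vee_1 y)=0$. Since also $y\le x\vee_1 y$, the measure-defining equation together with $m(x\vee_1 y)=0$ yields
\[
m(y) = m\bigl((x\vee_1 y)\rs y\bigr) = m\bigl(((x\ra y)\rs y)\rs y\bigr).
\]
Trading the outer $\rs$ for $\ra$ via Proposition~\ref{m-psBE-20}(5), this equals $m((x\ra y)\vee_2 y)$. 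But $x\ra y \le (x\ra y)\vee_2 y$ by Lemma~\ref{psBE-100}(4), so Proposition~\ref{m-psBE-20}(2) bounds the expression above by $m(x\ra y)=0$, forcing $m(y)=0$.

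For part~(3), let $m\in \mathcal{MM}(A)$. First I would establish $m(1)=0$: the morphism identity applied to $1\ra y=y$ gives $m(y) = \max\{0,\,m(y)-m(1)\}$ for every $y\in A$, which is compatible with $m\ge 0$ only when $m(1)=0$. Then, for $y\le x$, the equation $y\ra x=1$ together with the morphism identity yields $0 = m(1) = \max\{0,\,m(x)-m(y)\}$, so $m(x)\le m(y)$. Consequently $\max\{0,\,m(y)-m(x)\} = m(y)-m(x)$, and the morphism identity becomes the measure-defining identity for both $\ra$ and $\rs$; that is, $m\in \mathcal{M}(A)$.
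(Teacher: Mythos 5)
Your proof is correct. Note that the paper itself gives no argument here beyond a pointer to the analogous statement for pseudo-BCK algebras (\cite[Prop.~4.3]{Ciu25}), so there is nothing line-by-line to compare against; what you have done is supply a self-contained derivation from the paper's own internal results, which is more than the text provides. Each step checks out: in (1), Lemma~\ref{psBE-100}(2) with the variables swapped plus Proposition~\ref{m-psBE-20}(3) is exactly what is needed; in (2), the only nontrivial point is $(ds_2)$, and your chain $m(y)=m((x\vee_1 y)\rs y)=m(((x\ra y)\rs y)\ra y)=m((x\ra y)\vee_2 y)\le m(x\ra y)=0$ is valid --- the identification $((x\ra y)\rs y)\ra y=(x\ra y)\vee_2 y$ is just the definition of $\vee_2$, and you correctly avoid the pseudo-BCK identity $x\vee_1 y\ra y=x\ra y$ of Proposition~\ref{psBE-20}(6), which is not available in a general pseudo-BE algebra; in (3), $m(1)=0$ (most quickly seen from $m(1)=m(1\ra 1)=\max\{0,0\}$, though your version also works) gives antitonicity, hence $\max\{0,m(y)-m(x)\}=m(y)-m(x)$ whenever $y\le x$, so the morphism identity specializes to the measure identity.
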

\begin{proof} Similarly as \cite[Prop. 4.3]{Ciu25} for the case of pseudo-BCK algebras.
\end{proof}

\begin{prop} \label{m-psBE-40} Let $A$ be a bounded pseudo-BE algebra. 
If $s\in \mathcal{BS}_1(A)$, then $m=1-s\in \mathcal{MS}(A)$.
\end{prop}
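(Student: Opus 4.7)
The plan is to verify directly the two defining properties of a state-measure for $m = 1 - s$: namely that $m(0) = 1$ and that $m(x \ra y) = m(x \rs y) = m(y) - m(x)$ whenever $y \le x$.

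First I would handle the boundary value: since $s \in \mathcal{BS}_1(A)$ means $s(0) = 0$, we immediately obtain $m(0) = 1 - s(0) = 1$, which takes care of the normalization required in Definition \ref{m-psBE-10}(2). I would also note that $m$ takes values in $[0, \infty)$ because $s$ takes values in $[0, 1]$, so $m = 1 - s$ lands in $[0, 1] \subseteq [0, \infty)$.

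The main computation is the measure identity for $y \le x$. I would invoke Proposition \ref{s-psBE-20}(1), which gives $s(b \ra a) = 1 + s(a) - s(b) = s(b \rs a)$ whenever $a \le b$. Applying this with $a := y$ and $b := x$ (so that $y \le x$), I get
\[
s(x \ra y) \;=\; 1 + s(y) - s(x) \;=\; s(x \rs y).
\]
Subtracting from $1$ yields
\[
m(x \ra y) \;=\; 1 - s(x \ra y) \;=\; s(x) - s(y) \;=\; (1 - s(y)) - (1 - s(x)) \;=\; m(y) - m(x),
\]
and identically $m(x \rs y) = m(y) - m(x)$. This is exactly condition (1) of Definition \ref{m-psBE-10}, so $m \in \mathcal{M}(A)$, and combined with $m(0) = 1$ we conclude $m \in \mathcal{MS}(A)$.

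There is no real obstacle here; the statement is essentially a bookkeeping translation between the Bosbach-state axioms and the measure axioms via the substitution $m = 1 - s$. The only point that requires a little care is ensuring the direction of the inequality in Proposition \ref{s-psBE-20}(1) is matched correctly with the direction required in Definition \ref{m-psBE-10}(1); everything else is a one-line subtraction.
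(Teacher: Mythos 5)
Your proof is correct and follows essentially the same route as the paper: both apply the formula $s(x\ra y)=1+s(y)-s(x)=s(x\rs y)$ from Proposition \ref{s-psBE-20} for $y\le x$ and then subtract from $1$, together with $m(0)=1-s(0)=1$. (You even cite the correct item (1) of that proposition, whereas the paper's proof nominally refers to item (2); the content used is the same.)
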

\begin{proof} 
Let $x, y\in A$, $y\le x$, so $y\ra x=y\rs x=1$. \\
By Proposition \ref{s-psBE-20}$(2)$ we have $s(x\ra y)=s(x\rs y)=1-s(x)+s(y)$. 
It follows that:
$\hspace*{2cm}$ $m(x\ra y)=1-s(x\ra y)=s(x)-s(y)$ \\
$\hspace*{3.8cm}$ $=1-s(y)-(1-s(x)=m(y)-m(x)$. \\ 
Similarly $m(x\rs y)=m(y)-m(x)$. We also have $m(0)=1-s(0)=1$. \\
Hence $m\in \mathcal{MS}(A)$.
\end{proof}

\begin{prop} \label{m-psBE-40-10} Let $A$ be a bounded pseudo-BE(A) algebra. 
If $m\in \mathcal{MS}(A)$, then $s=1-m\in \mathcal{BS}_1(A)$.
\end{prop}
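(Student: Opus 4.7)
The plan is to show that $s=1-m$ satisfies the characterization of Bosbach states given in Proposition \ref{s-psBE-20-10}(c), and then read off $s(0)=0$ from $m(0)=1$.

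First I would dispose of the boundary conditions. By Proposition \ref{m-psBE-20}(1) we have $m(1)=0$, whence $s(1)=1-m(1)=1$, and since $m\in\mathcal{MS}(A)$ we have $m(0)=1$, whence $s(0)=1-m(0)=0$. So as soon as $s$ is verified to be a Bosbach state, it lies in $\mathcal{BS}_1(A)$.

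Next I would establish the key formulas
\[
s(x\to y)=1+s(y)-s(x\vee_1 y),\qquad s(x\rs y)=1+s(y)-s(x\vee_2 y),
\]
for all $x,y\in A$. For the first, apply Proposition \ref{m-psBE-20-10} to get $m(x\to y)=m(x\vee_1 y\to y)$; since $y\le x\vee_1 y$ by Lemma \ref{psBE-100}(4), the defining property of a measure gives $m(x\vee_1 y\to y)=m(y)-m(x\vee_1 y)$. Substituting $m=1-s$ on both sides and simplifying yields the stated identity. The argument for $\rs$ is symmetric, using the $\vee_2$-analogue of Proposition \ref{m-psBE-20-10} (which is proved in exactly the same way, replacing $\vee_1$ by $\vee_2$ via the identity $x\rs y\le ((x\rs y)\ra y)\rs y=(x\vee_2 y)\rs y$).

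With these two identities in hand, condition (c) of Proposition \ref{s-psBE-20-10} is satisfied for $s$, and since $s(1)=1$, the equivalence (c)$\Leftrightarrow$(a) of that proposition gives $s\in\mathcal{BS}(A)$. Combined with $s(0)=0$, we conclude $s\in\mathcal{BS}_1(A)$.

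The only mild obstacle is justifying the use of Proposition \ref{m-psBE-20-10}, which is where the pseudo-BE(A) hypothesis enters: antitony in the first variable is needed to obtain the comparison $m(x\vee_1 y\to y)\ge m(x\to y)$ (the reverse inequality is free from $x\ra y\le(x\vee_1 y)\ra y$ and monotonicity of $m$ in the opposite direction, Proposition \ref{m-psBE-20}(2)). Everything else is pure arithmetic in $[0,1]$.
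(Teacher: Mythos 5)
Your proof is correct and follows essentially the same route as the paper: both hinge on the identity $m(x\ra y)=m(y)-m(x\vee_1 y)$ (and its $\rs$/$\vee_2$ analogue) obtained from Proposition \ref{m-psBE-20-10} together with the measure property applied to $y\le x\vee_1 y$. The only difference is in the last step, where the paper verifies $(bs_2)$ and $(bs_3)$ directly using $m(x\vee_1 y)=m(y\vee_1 x)$ from Proposition \ref{m-psBE-20}(3), while you invoke the characterization in Proposition \ref{s-psBE-20-10}(c); both are valid.
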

\begin{proof} Let $m\in \mathcal{MS}(A)$ and let $x, y\in A$. \\
Since $y\le x\vee_1 y$, we get: \\
$\hspace*{2cm}$ $m(x\vee_1 y\ra y)=m(x\vee_1 y\rs y)=m(y)-m(x\vee_1 y)$. \\
According to Proposition \ref{m-psBE-20-10} we have: \\ 
$\hspace*{2cm}$ $m(x\vee_1 y\ra y)=m(x\ra y)$ and $m(x\vee_1 y\rs y)=m(x\rs y)$. \\
Hence $m(x\ra y)=m(y)-m(x\vee_1 y)$ and $m(x\rs y)=m(y)-m(x\vee_2 y)$. \\
Similarly $m(y\ra x)=m(x)-m(y\vee_1 x)$ and $m(y\rs x)=m(x)-m(y\vee_2 x)$. \\ 
Applying Proposition \ref{m-psBE-20}$(3)$ we get: \\
$\hspace*{2cm}$ $m(x)+m(x\ra y)=m(y)+m(y\ra x)$, \\
$\hspace*{2cm}$ $m(x)+m(x\rs y)=m(y)+m(y\rs x)$. \\
It follows that: \\
$\hspace*{2cm}$ $s(x)+s(x\ra y)=s(y)+s(y\ra x)$, \\
$\hspace*{2cm}$ $s(x)+s(x\rs y)=s(y)+s(y\rs x)$. \\
Moreover $s(0)=1-m(0)=0$ and $s(1)=1-m(1)=1$. \\
We conclude that $s\in \mathcal{BS}_1(A)$.
\end{proof} 

\begin{theo} \label{m-psBE-40-20} Let $A$ be a bounded pseudo-BE(A) algebra.  
There is a one-to-one correspondence between $\mathcal{BS}_1(A)$ and $\mathcal{MS}(A)$.
\end{theo}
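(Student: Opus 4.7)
The plan is to exhibit the bijection explicitly using the two assignments already established in the preceding propositions, and then verify that they are mutually inverse. Concretely, I would define
\[
\Phi : \mathcal{BS}_1(A)\longrightarrow \mathcal{MS}(A),\qquad \Phi(s)=1-s,
\]
and
\[
\Psi : \mathcal{MS}(A)\longrightarrow \mathcal{BS}_1(A),\qquad \Psi(m)=1-m,
\]
where $(1-s)(x):=1-s(x)$ and $(1-m)(x):=1-m(x)$ for all $x\in A$.

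First I would observe that $\Phi$ is well defined: this is exactly the content of Proposition \ref{m-psBE-40}, which uses the pseudo-BE(A) hypothesis (or, at least, the fact that $A$ is bounded) only through the defining formulas for a Bosbach state. Next I would observe that $\Psi$ is well defined: this is exactly Proposition \ref{m-psBE-40-10}. Note that both maps take values in the correct target since $s(0)=0$ forces $(1-s)(0)=1$, and $m(0)=1$ forces $(1-m)(0)=0$; also $(1-s)(1)=0$ is consistent with $m(1)=0$ from Proposition \ref{m-psBE-20}(1), and $(1-m)(1)=1$ matches $(bs_1)$.

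Finally, I would check that $\Psi\circ\Phi=\mathrm{id}_{\mathcal{BS}_1(A)}$ and $\Phi\circ\Psi=\mathrm{id}_{\mathcal{MS}(A)}$. For any $s\in \mathcal{BS}_1(A)$ and any $x\in A$,
\[
(\Psi\circ\Phi)(s)(x)=1-(1-s(x))=s(x),
\]
and analogously $(\Phi\circ\Psi)(m)(x)=m(x)$ for every $m\in \mathcal{MS}(A)$. This establishes that $\Phi$ and $\Psi$ are inverse bijections, which yields the desired one-to-one correspondence.

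There is essentially no obstacle, because all analytical content has already been absorbed into Propositions \ref{m-psBE-40} and \ref{m-psBE-40-10}; the only thing to verify here is the trivial involutivity of the map $t\mapsto 1-t$ on real-valued functions, together with the compatibility of the boundary values $s(0)=0\Leftrightarrow m(0)=1$ and $s(1)=1\Leftrightarrow m(1)=0$.
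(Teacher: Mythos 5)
Your proposal is correct and follows exactly the paper's route: the paper also deduces the theorem directly from Propositions \ref{m-psBE-40} and \ref{m-psBE-40-10}, with the maps $s\mapsto 1-s$ and $m\mapsto 1-m$ as the (obviously mutually inverse) correspondences. You have merely written out the inverse-check that the paper leaves implicit.
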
 
\begin{proof}
It follows by Propositions \ref{m-psBE-40} and \ref{m-psBE-40-10}. 
\end{proof}

\begin{ex} \label{m-psBE-50}
Let $A$ be the pseudo-BE(A) algebra from Example \ref{psBE-70-30}. \\
Define $m^1_{\alpha}, m^2_{\alpha}, m^3_{\alpha,\beta}, m^4: A\longrightarrow [0, \infty)$ by: \\
$\hspace*{2cm}$
$m^1_{\alpha}(1)=m^1_{\alpha}(a)=m^1_{\alpha}(d)=0, m^1_{\alpha}(b)=m^1_{\alpha}(c)=\alpha;$ \\ 
$\hspace*{2cm}$
$m^2_{\alpha}(1)=m^2_{\alpha}(b)=m^2_{\alpha}(c)=0, m^2_{\alpha}(a)=m^2_{\alpha}(d)=\alpha;$ \\ 
$\hspace*{2cm}$
$m^3_{\alpha,\beta}(1)=0, m^3_{\alpha,\beta}(a)=m^3_{\alpha,\beta}(d)=\alpha,  
m^3_{\alpha,\beta}(b)=m^3_{\alpha,\beta}(c)=\beta;$ \\ 
$\hspace*{2cm}$
$m^4(1)=m^4(a)=m^4(b)=m^4(c)=m^4(d)=0$. \\ 
Then we have: \\ 
$\hspace*{2cm}$
$\mathcal{M}(A)=\{m^1_{\alpha} \mid \alpha\in {\mathbb R}, \alpha > 0\} \cup 
                     \{m^2_{\alpha} \mid \alpha\in {\mathbb R}, \alpha > 0\} \cup $ \\     
$\hspace*{3.5cm}$                                 
        $\{m^3_{\alpha,\beta} \mid \alpha,\beta\in {\mathbb R}, \alpha,\beta > 0\} \cup \{m^4\};$ \\ 
$\hspace*{2cm}$
$\mathcal{MM}(A)=\{m^1_{\alpha} \mid \alpha\in {\mathbb R}, \alpha > 0\} \cup 
                     \{m^2_{\alpha} \mid \alpha\in {\mathbb R}, \alpha > 0\} \cup \{m^4\}$. \\     
We can see that $\mathcal{M}(A)\subset \mathcal{MM}(A)$, but $\mathcal{M}(A)\ne \mathcal{MM}(A)$. 
\end{ex}

\begin{Def} \label{is-psBE-10} Let $(A, \rightarrow, \rightsquigarrow, 1)$ be a pseudo-BE algebra   
and $\mu:A \longrightarrow A$ be a unary operator on $A$. For all $x, y\in A$ consider the following axioms:\\
$(is_1)$ $\mu(x)\le \mu(y)$, whenever $x\le y,$ \\
$(is_2)$ $\mu(x\ra y)=\mu(x\vee_1 y)\ra \mu(y)$ and $\mu(x\rs y)=\mu(x\vee_2 y)\rs \mu(y),$ \\
$(is^{'}_2)$ $\mu(x\ra y)=\mu(y\vee_1 x)\ra \mu(y)$ and $\mu(x\rs y)=\mu(y\vee_2 x)\rs \mu(y),$ \\ 
$(is_3)$ $\mu(\mu(x)\ra \mu(y))=\mu(x)\ra \mu(y)$ and $\mu(\mu(x)\rs \mu(y))=\mu(x)\rs \mu(y)$. \\
Then: \\
$(i)$ $\mu$ is called an \emph{internal state of type I} or a \emph{state operator of type I} or 
a \emph{type I state operator} if it satisfies axioms $(is_1)$, $(is_2)$, $(is_3);$ \\    
$(ii)$ $\mu$ is called an \emph{internal state of type II} or a \emph{state operator of type II} or a 
\emph{type II state operator} if it satisfies axioms $(is_1)$, $(is^{'}_2)$, $(is_3)$. \\
The structure $(A, \rightarrow, \rightsquigarrow, \mu, 1)$ ($(A,\mu)$, for short) is called a 
\emph{state pseudo-BE algebra of type I (type II) state pseudo-BE algebra}, respectively.
\end{Def}

Denote $\mathcal{IS}^{(I)}(A)$ and $\mathcal{IS}^{(II)}(A)$ the set of all internal states of 
type I and II on a pseudo-BE algebra $A$, respectively. \\
For $\mu \in \mathcal{IS}^{(I)}(A)$ or $\mu \in \mathcal{IS}^{(II)}(A)$, 
$\Ker(\mu)=\{x\in A \mid \mu(x)=1\}$  is called the \emph{kernel} of $\mu$. 

\begin{prop} \label{is-psBE-20} Let $(A, \ra, \rs, \mu, 1)$ be a type I or a type II state pseudo-BE(A) algebra. 
Then the following hold:\\ 
$(1)$ $\mu(1)=1;$ \\
$(2)$ $\mu(\mu(x))=\mu(x)$, for all $x\in A;$ \\
$(3)$ $\mu(x\rightarrow y)\le \mu(x)\rightarrow \mu(y)$ and 
      $\mu(x\rightsquigarrow y)\le \mu(x)\rightsquigarrow \mu(y)$, for all $x, y\in A;$ \\
$(4)$ $\Ker(\mu)\in {\mathcal DS}(A);$ \\
$(5)$ $\Img(\mu)$ is a subalgebra of $A;$ \\ 
$(6)$ $\Img(\mu)=\{x\in A \mid x=\mu(x)\};$ \\
$(7)$ $\Ker(\mu)\cap \Img(\mu)=\{1\}$. 
\end{prop}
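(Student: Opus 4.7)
My plan is to prove the seven items in the stated order, treating the type I and type II cases uniformly by letting $u$ denote whichever of $x\vee_1 y$ or $y\vee_1 x$ (respectively $x\vee_2 y$ or $y\vee_2 x$) appears in $(is_2)$ or $(is^{'}_2)$; in both cases Lemma \ref{psBE-100}(4) ensures that $u$ is an upper bound of $\{x,y\}$. For (1), I would substitute $y:=x$ in $(is_2)$ or $(is^{'}_2)$: since $x\ra x=1$ and $x\vee_1 x=x$ by Lemma \ref{psBE-100}(3), this collapses to $\mu(1)=\mu(x)\ra\mu(x)=1$. Item (2) then falls out of $(is_3)$ after specializing the first variable to $1$: using (1) and $(psBE_3)$, $\mu(\mu(y))=\mu(\mu(1)\ra\mu(y))=\mu(1)\ra\mu(y)=\mu(y)$.

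Item (3), which I expect to be the main step, is the one place where the $(A)$ condition and the monotonicity axiom $(is_1)$ genuinely enter. From $u\ge x$ and $(is_1)$ I obtain $\mu(u)\ge\mu(x)$, and then the $(A)$ condition forces $\mu(u)\ra\mu(y)\le\mu(x)\ra\mu(y)$; combining with $(is_2)$ or $(is^{'}_2)$ yields $\mu(x\ra y)\le\mu(x)\ra\mu(y)$. The companion inequality $\mu(x\rs y)\le\mu(x)\rs\mu(y)$ is obtained symmetrically using the $\vee_2$-version of the relevant axiom. With (3) in hand, item (4) is immediate: $1\in\Ker(\mu)$ by (1), and if $\mu(x)=\mu(x\ra y)=1$, then (3) together with $(psBE_3)$ gives $1\le\mu(x)\ra\mu(y)=\mu(y)$, which forces $\mu(y)=1$, so $y\in\Ker(\mu)$.

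Items (5)--(7) are then formal. For (5), $1=\mu(1)\in\Img(\mu)$ by (1), while $(is_3)$ directly witnesses $\mu(x)\ra\mu(y),\mu(x)\rs\mu(y)\in\Img(\mu)$, so $\Img(\mu)$ is closed under both operations. For (6), the inclusion $\{x\in A\mid x=\mu(x)\}\subseteq\Img(\mu)$ is tautological, and conversely if $x=\mu(y)$ then (2) gives $\mu(x)=\mu(\mu(y))=\mu(y)=x$. Finally, (7) drops out: any $x\in\Ker(\mu)\cap\Img(\mu)$ satisfies $\mu(x)=1$ and, by (6), $x=\mu(x)$, so $x=1$. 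The only real obstacle is to set up the proof of (3), which carries the rest, so that the same argument works for both types; this is guaranteed by invoking only the upper-bound property of $u$ (Lemma \ref{psBE-100}(4)) rather than the specific choice of $\vee_i$.
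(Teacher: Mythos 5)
Your proof is correct, and it is essentially the argument the paper intends: the paper only cites the pseudo-BCK analogue ``based on the (A) condition,'' and your reconstruction uses the (A) condition together with $(is_1)$ at exactly the one place it is needed (item (3)), with the remaining items following formally as you describe. The uniform treatment of types I and II via the upper-bound property of $x\vee_i y$ and $y\vee_i x$ (Lemma \ref{psBE-100}(4)) is a clean way to handle both cases at once.
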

\begin{proof}
Similarly as \cite[Prop. 5.5]{Ciu7}, based on (A) condition.
\end{proof}

\begin{prop} \label{is-psBE-30} Let $(A, \ra, \rs, \mu, 1)$ be a type II pseudo-BE(A) algebra. 
Then the following hold:\\ 
$(1)$ $y\le x$ implies $\mu(x\ra y)=\mu(x)\ra \mu(y)$ and $\mu(x\rs y)=\mu(x)\rs \mu(y);$ \\
$(2)$ $x\ra y\in \Ker(\mu)$ iff $y\vee_1 x\ra y\in \Ker(\mu);$ \\            
$(3)$ $x\rs y\in \Ker(\mu)$ iff $y\vee_2 x\rs y\in \Ker(\mu).$     
\end{prop}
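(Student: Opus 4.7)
The plan is to derive all three claims directly from the defining axiom $(is'_2)$ of a type II state operator, together with elementary facts about $\vee_1$ and $\vee_2$ gathered in Lemma \ref{psBE-100}.

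For part $(1)$, the idea is to rewrite $y\vee_1 x$ and $y\vee_2 x$ when $y\le x$. By Lemma \ref{psBE-100}$(2)$, if $y\le x$ then $y\vee_1 x = y\vee_2 x = x$. Applying $(is'_2)$ directly gives
\[
\mu(x\ra y) = \mu(y\vee_1 x)\ra \mu(y) = \mu(x)\ra \mu(y),
\]
and similarly for $\rs$. No use of $(A)$ or $(is_3)$ is needed here.

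For parts $(2)$ and $(3)$, I would prove the stronger equalities
\[
\mu(x\ra y) = \mu\bigl((y\vee_1 x)\ra y\bigr), \qquad \mu(x\rs y) = \mu\bigl((y\vee_2 x)\rs y\bigr),
\]
from which the two stated equivalences about $\Ker(\mu)$ follow at once. For the first equality, Lemma \ref{psBE-100}$(4)$ gives $y\le y\vee_1 x$, and then Lemma \ref{psBE-100}$(2)$ yields $y\vee_1(y\vee_1 x) = y\vee_1 x$. Applying $(is'_2)$ to the left-hand side produces $\mu(y\vee_1 x)\ra \mu(y)$, and applying it to the right-hand side (with $x$ replaced by $y\vee_1 x$) produces $\mu(y\vee_1(y\vee_1 x))\ra \mu(y) = \mu(y\vee_1 x)\ra \mu(y)$. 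The argument for the $\rs$-version is identical using $\vee_2$ in place of $\vee_1$.

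I do not expect a real obstacle: once the correct substitutions in $(is'_2)$ are identified, everything reduces to the elementary identity $y\vee_i(y\vee_i x) = y\vee_i x$ for $i\in\{1,2\}$. The only point requiring a bit of care is to invoke Lemma \ref{psBE-100}$(4)$ (upper-bound property) before Lemma \ref{psBE-100}$(2)$ (absorption), and to keep straight which variable is being $\vee_1$-joined on which side; neither the antitonicity condition $(A)$ nor the idempotency axiom $(is_3)$ plays any role in the derivation.
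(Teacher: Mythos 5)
Your proposal is correct. Part $(1)$ is exactly the paper's argument: apply $(is'_2)$ and collapse $\mu(y\vee_1 x)$ to $\mu(x)$ via Lemma \ref{psBE-100}$(2)$. For parts $(2)$ and $(3)$ the paper's forward direction coincides with yours -- it chains $\mu(x\ra y)=\mu(y\vee_1 x)\ra\mu(y)=\mu((y\vee_1 x)\ra y)$, where the last step is part $(1)$ applied to $y\le y\vee_1 x$, which is the same computation as your absorption identity $y\vee_1(y\vee_1 x)=y\vee_1 x$. The difference is in the converse: the paper argues separately, using $x\le y\vee_1 x$ together with condition $(A)$ to get $(y\vee_1 x)\ra y\le x\ra y$ and then the fact that $\Ker(\mu)$ is a deductive system, whereas you observe that the equality $\mu(x\ra y)=\mu((y\vee_1 x)\ra y)$ already yields both directions at once. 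Your route is the more economical one: it shows the statement is really an identity of $\mu$-values rather than a kernel equivalence, it needs only $(is'_2)$ and Lemma \ref{psBE-100} (both of which hold in an arbitrary pseudo-BE algebra), and in particular it dispenses with the antitonicity hypothesis $(A)$ and with Proposition \ref{is-psBE-20}$(4)$, so the result holds for any operator satisfying $(is'_2)$ on any pseudo-BE algebra. The paper's version buys nothing extra here; its separate converse argument is redundant given the equality its own forward direction establishes.
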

\begin{proof} 
$(1)$ It follows applying Lemma \ref{psBE-100}$(2)$. \\
$(2)$ Suppose $x\ra y\in \Ker(\mu)$, that is $\mu(x\ra y)=1$. 
Since by Lemma \ref{psBE-100}$(4)$, $y\le y\vee_1 x$, applying $(1)$ we get   
$\mu(y\vee_1 x \ra y)=\mu(y\vee_1 x)\ra \mu(y)$, hence \\ 
$\hspace*{2cm}$ $1=\mu(x\ra y)=\mu(y\vee_1 x)\ra \mu(y)=\mu(y\vee_1 x\ra y)$. \\ 
Thus $y\vee_1 x\ra y\in \Ker(\mu)$. \\ 
Conversely, suppose $y\vee_1 x\ra y\in \Ker(\mu)$. \\
From $x\le y\vee_1 x$, by (A) condition, we have $y\vee_1 x \ra y\le x\ra y$. \\ 
Since $\Ker(\mu)\in {\mathcal DS}(A)$, we get $x\ra y\in \Ker(\mu)$. \\
$(3)$ Similarly as $(2)$.  
\end{proof}

\begin{ex} \label{is-psBE-50} Consider the pseudo-BE algebra $(A,\ra,\rs,1)$ from Example \ref{psBE-70-30} 
and the maps $\mu_i:A\longrightarrow A$, $i=1,2,\cdots,10$, given in the table below:
\[
\begin{array}{c|cccccc}
 x & 1 & a & b & c & d  \\ \hline
\mu_1(x) & 1 & a & a & a & a \\
\mu_2(x) & 1 & b & b & b & b \\
\mu_3(x) & 1 & c & c & c & c \\
\mu_4(x) & 1 & d & d & d & d \\
\mu_5(x) & 1 & d & c & c & d \\
\mu_6(x) & 1 & 1 & b & b & 1 \\
\mu_7(x) & 1 & 1 & c & c & 1 \\
\mu_8(x) & 1 & a & 1 & 1 & a \\
\mu_9(x) & 1 & d & 1 & 1 & d \\
\mu_{10}(x) & 1 & 1 & 1 & 1 & 1 
\end{array}
.   
\]
Then: $\mathcal{IS}^{(I)}(A)=\mathcal{IS}^{(II)}(A)=\{\mu_1, \mu_2,\cdots,\mu_{10}\}$. 
\end{ex}

\begin{Def} \label{smo-ps-10} Let $(A, \ra,\rs,1)$ be a pseudo-BE algebra. 
A homomorphism $\mu:A\longrightarrow A$ is called a \emph{state-morphism operator} on $A$ if $\mu^2=\mu$, where $\mu^2=\mu\circ \mu$. The pair $(A, \mu)$ is called a \emph{state-morphism pseudo-BE algebra}.
\end{Def}

Denote $\mathcal{SMO}(A)$ the set of all state-morphism operators on a pseudo-BE algebra $A$.

\begin{ex} \label{smo-psBE-50} Consider the pseudo-BE algebra $(A,\ra,\rs,1)$ from Example \ref{psBE-70-30} 
and the maps $\mu_i:A\longrightarrow A$, $i=1,2,\cdots,9$, given in the table below:
\[
\begin{array}{c|cccccc}
 x & 1 & a & b & c & d  \\ \hline
\mu_1(x) & 1 & d & c & c & d \\
\mu_2(x) & 1 & 1 & b & b & 1 \\
\mu_3(x) & 1 & 1 & c & c & 1 \\
\mu_4(x) & 1 & a & 1 & 1 & a \\
\mu_5(x) & 1 & d & 1 & 1 & d \\
\mu_6(x) & 1 & 1 & 1 & 1 & 1 \\
\mu_7(x) & 1 & a & b & c & d \\
\mu_8(x) & 1 & a & c & c & d \\
\mu_9(x) & 1 & d & b & c & d 
\end{array}
.   
\]
Then: $\mathcal{SMO}(A)=\{\mu_1, \mu_2,\cdots,\mu_{9}\}$. 
\end{ex}

\begin{rem} \label{smo-psBE-60} Let $(A,\ra,\rs,1)$ be a pseudo-BCK algebra. 
According to \cite[Remark 6.4]{Ciu7}, $\mathcal{SMO}(A)\subseteq \mathcal{IS}^{(I)}(A)$. 
As we can see in Examples \ref{is-psBE-50} and \ref{smo-psBE-50}, this result is not valid in the case 
of pseudo-BE algebras. 
\end{rem}

\begin{prop} \label{smo-psBE-70} Let $(A,\ra,\rs,1)$ be a linearly ordered pseudo-BE algebra. \\
Then $\mathcal{IS}^{(II)}(A)\subseteq  \mathcal{SMO}(A)$. 
\end{prop}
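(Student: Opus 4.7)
The plan is to verify both defining conditions of a state-morphism operator for $\mu \in \mathcal{IS}^{(II)}(A)$: the homomorphism property and the idempotence $\mu^2 = \mu$. The linear order is the key ingredient that forces axiom $(is^{'}_2)$ to collapse into the homomorphism identity, while $(is_3)$ will then deliver idempotence for free via a clever substitution.

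First I would verify that $\mu$ is a pseudo-BE homomorphism by a case split. For arbitrary $x, y \in A$, linearity gives either $x \le y$ or $y \le x$. Note that $\mu(1) = 1$ follows by setting $y := x$ in $(is_3)$, since $\mu(\mu(x)\ra\mu(x)) = \mu(x)\ra\mu(x) = 1$. In the first case, $x \ra y = x \rs y = 1$, so $\mu(x\ra y) = \mu(x\rs y) = \mu(1) = 1$; on the other hand, $(is_1)$ forces $\mu(x) \le \mu(y)$, hence $\mu(x)\ra\mu(y) = \mu(x)\rs\mu(y) = 1$, and both identities hold trivially. In the second case, Lemma~\ref{psBE-100}(2) yields $y\vee_1 x = y\vee_2 x = x$, so $(is^{'}_2)$ reads precisely $\mu(x\ra y) = \mu(x)\ra\mu(y)$ and $\mu(x\rs y) = \mu(x)\rs\mu(y)$.

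For idempotence, the key trick is to use $(psBE_3)$ together with $\mu(1) = 1$ to write $\mu(x) = 1\ra\mu(x) = \mu(1)\ra\mu(x)$ for every $x \in A$, and then apply $(is_3)$, which gives $\mu(\mu(1)\ra\mu(x)) = \mu(1)\ra\mu(x)$, that is, $\mu(\mu(x)) = \mu(x)$. Combined with the homomorphism property, this exhibits $\mu$ as a state-morphism operator. There is no serious technical obstacle: the whole argument hinges on noticing that linearity forces $y\vee_1 x$ and $y\vee_2 x$ in $(is^{'}_2)$ to reduce to $x$ whenever the identity is nontrivial, and that $\mu(x)$ can be re-expressed as $\mu(1)\ra\mu(x)$ to fit the template of $(is_3)$.
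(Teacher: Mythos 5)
Your proof is correct, and the core of it --- the case split forced by linearity, with $(is_1)$ handling $x\le y$ and Lemma~\ref{psBE-100}(2) collapsing $(is^{'}_2)$ to the homomorphism identity when $y\le x$ --- is exactly the paper's argument. Where you genuinely differ is in how you obtain the auxiliary facts $\mu(1)=1$ and $\mu^2=\mu$: the paper simply cites Proposition~\ref{is-psBE-20}(1),(2),(3), but that proposition is stated and proved only for pseudo-BE\emph{(A)} algebras (its proof is explicitly ``based on (A) condition''), whereas the present statement assumes only a linearly ordered pseudo-BE algebra. Your direct derivations --- $\mu(1)=\mu(\mu(x)\ra\mu(x))=\mu(x)\ra\mu(x)=1$ from $(is_3)$ with $y:=x$, and then $\mu(\mu(x))=\mu(\mu(1)\ra\mu(x))=\mu(1)\ra\mu(x)=\mu(x)$ from $(is_3)$ again --- use nothing beyond the axioms of a type~II state operator and $(psBE_1)$, $(psBE_3)$, so they are valid under the stated hypotheses and actually repair a small logical gap in the paper's citation chain. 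In the $x\le y$ case you also observe directly that both sides equal $1$ (via $\mu(x)\le\mu(y)$ from $(is_1)$), which is cleaner than the paper's detour through the inequality $\mu(x\ra y)\le\mu(x)\ra\mu(y)$ of Proposition~\ref{is-psBE-20}(3).
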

\begin{proof} Let $\mu\in \mathcal{IS}^{(II)}(A)$. 
According to Proposition \ref{is-psBE-20}$(2)$, $\mu^2=\mu$. \\
Let $x, y\in A$ such that $x\le y$, so $x\ra y=x\rs y=1$ and $\mu(x)\le \mu(y)$. \\
Applying Proposition \ref{is-psBE-20}$(1)$,$(3)$ we get: \\
$\hspace*{2cm}$ $1=\mu(1)=\mu(x\ra y)\le \mu(x)\ra \mu(y)=1$, \\
hence $\mu(x\ra y)=\mu(x)\ra \mu(y)$ and similarly $\mu(x\rs y)=\mu(x)\rs \mu(y)$. \\
If $y\le x$, then by Lemma \ref{psBE-100}$(2)$ we have: \\
$\hspace*{2cm}$ $\mu(x\ra y)=\mu(y\vee_1 x)\ra \mu(y)=\mu(x)\ra \mu(y)$ and \\
$\hspace*{2cm}$ $\mu(x\rs y)=\mu(y\vee_2 x)\rs \mu(y)=\mu(x)\rs \mu(y)$. \\ 
It follows that $\mu\in \mathcal{SMO}(A)$, hence $\mathcal{IS}^{(II)}(A)\subseteq  \mathcal{SMO}(A)$. 
\end{proof}

$\vspace*{5mm}$

\section{On commutative pseudo-BE algebras}

Commutative pseudo-BE algebras were defined and investigated in \cite{Ciu33}. 
It was proved that the class of commutative pseudo-BE algebras is equivalent to the class of commutative 
pseudo-BCK algebras. Based on this result, all results holding for commutative pseudo-BCK algebras also hold for commutative pseudo-BE algebras. 
For example, any finite commutative pseudo-BE algebra is a BE-algebra, and any commutative pseudo-BE algebra is a 
join-semilattice. Moreover, if a commutative pseudo-BE algebra is a meet-semilattice, then it is a distributive lattice.
In this section we recall some properties and results on commutative pseudo-BE algebras which will be used in the 
next sections. We generalize to the case of commutative pseudo-BE algebras the axiom system given in \cite{Rez2} for commutative BE-algebras. A characterization of commutative pseudo-BE algebras is also given. 
We prove that in the case of a commutative pseudo-BE algebra the two types of internal states coincide.
Moreover, if a commutative pseudo-BE algebra is linearly ordered, then any internal state is also a 
state-morphism operator. 

\begin{Def} \label{comm-psBE-10} $\rm($\cite{Ciu33}$\rm)$ A pseudo-BE algebra $(A,\rightarrow,\rightsquigarrow,1)$ is said to be \emph{commutative} if it satisfies the following conditions, for all $x, y\in A$: \\ 
$\hspace*{3cm}$ $x\vee_1 y=y\vee_1 x$ and $x\vee_2 y=y\vee_2 x$. 
\end{Def}

Obviously any bounded commutative pseudo-BE algebra is involutive. 

\begin{ex} \label{comm-psBE-10-10} The pseudo-BE algebra $(A,\rightarrow,\rightsquigarrow,1)$ from 
Example \ref{psBE-50} is not commutative, since $a\vee_2 c=c\neq b=c\vee_2 a$. 
\end{ex}

\begin{ex} \label{comm-psBE-10-20} 
Let $(G, \vee,\wedge, \cdot, ^{-1}, e)$ be an $\ell$-group. On the negative cone $G^{-}=\{g\in G \mid g\le e\}$ we define the operations $x\rightarrow y=y\cdot (x\vee y)^{-1}$, $x\rightsquigarrow y=(x\vee y)^{-1}\cdot y$. 
Then $(G^{-}, \rightarrow, \rightsquigarrow, e)$ is a commutative pseudo-BE algebra. 
\end{ex}

\begin{ex} \label{comm-psBE-10-30} Let $(A,\ra,\rs,1)$ be a distributive pseudo-BE(A) algebra and let 
$s\in \mathcal{BS}(A)$. If $K=\Ker(s)$, then according to Theorem \ref{s-psBE-40}, 
$(A/K,\ra,1/K)=(A/K,\rs,1/K)$ is a commutative BE-algebra.
\end{ex} 

\begin{theo} \label{comm-psBE-20} $\rm($\cite{Ciu33}$\rm)$ Any commutative pseudo-BE algebra is a pseudo-BCK algebra. 
\end{theo}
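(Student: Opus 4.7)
The plan is to verify the pseudo-BCK axioms $(psBCK_1)$--$(psBCK_6)$ for a commutative pseudo-BE algebra $A$. The axioms $(psBCK_3)$, $(psBCK_4)$, $(psBCK_5)$ are immediate: they coincide with parts of $(psBE_2)$ and $(psBE_3)$ already built into the pseudo-BE definition. So the work splits into proving antisymmetry $(psBCK_6)$ and the two transitivity laws $(psBCK_1)$ and $(psBCK_2)$.

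First I would handle $(psBCK_6)$, which is the clean application of commutativity. Assuming $x\to y=1$ and $y\to x=1$, compute
\[
y = 1\rs y = (x\to y)\rs y = x\vee_1 y
\qquad \text{and} \qquad
x = 1\rs x = (y\to x)\rs x = y\vee_1 x,
\]
using $(psBE_3)$. By the commutativity hypothesis $x\vee_1 y = y\vee_1 x$, these are equal, so $x=y$. This is a few lines and requires only $(psBE_3)$ and the definition of commutativity.

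For $(psBCK_1)$ and $(psBCK_2)$, the task is to derive a transitivity-type inequality $x\to y \le (y\to z)\rs(x\to z)$. The idea is to use $(psBE_4)$ to shuffle operands across the two implications: by $(psBE_4)$, the claim $(x\to y)\rs[(y\to z)\rs(x\to z)]=1$ is equivalent, after rewriting, to a statement about nested implications of the form $(y\to z)\rs[(x\to y)\to(x\to z)]$. One can then exploit $x\le x\vee_1 y = y\vee_1 x$ (this is where commutativity enters essentially) and Proposition \ref{psBE-40}$(2)$--$(4)$ to bound $(x\to z)$ below by compositions of the pieces. Applying $(psBE_5)$ at the end lets one switch freely between $\to$ and $\rs$ whenever the relevant value is $1$, yielding $(psBCK_2)$ from $(psBCK_1)$ by the symmetric argument.

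The main obstacle is the asymmetry between $\to$ and $\rs$: in a general pseudo-BE algebra one cannot simply swap the two operations inside a non-trivial expression, so the shuffling via $(psBE_4)$ must be performed in the correct order, and the commutativity hypothesis $x\vee_i y = y\vee_i x$ must be inserted at exactly the step where the two sides of the would-be inequality differ in the order of $x$ and $y$. Since the equivalence of commutative pseudo-BE algebras with commutative pseudo-BCK algebras has been established in \cite{Ciu33}, the cleanest route is to invoke that reference; the outline above records how the verification would proceed directly from the axioms.
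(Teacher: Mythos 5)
The paper offers no proof of this theorem --- it is imported verbatim from \cite{Ciu33} --- so there is nothing internal to compare against, and falling back on that citation is legitimate. Judged as a direct verification, your treatment of $(psBCK_3)$--$(psBCK_5)$ is fine, your proof of the antisymmetry axiom $(psBCK_6)$ is correct and complete ($x\to y=1$ and $y\to x=1$ give $y=x\vee_1 y=y\vee_1 x=x$ by commutativity), and the reduction of $(psBCK_1)$ via $(psBE_5)$ and $(psBE_4)$ to an inequality of the form $y\to z\le(x\to y)\to(x\to z)$ is sound.

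The gap is in the final step. ``Exploit $x\le x\vee_1 y=y\vee_1 x$ and Proposition \ref{psBE-40}$(2)$--$(4)$ to bound $x\to z$ below by compositions of the pieces'' is not an argument: Proposition \ref{psBE-40} holds in \emph{every} pseudo-BE algebra, and general pseudo-BE algebras fail $(psBCK_1)$ (Example \ref{psBE-50-10}), so a concrete consequence of commutativity must be isolated here, and your sketch never names it. The missing lemma is isotonicity of the implications in the second argument: if $x\le y$ then $z\to x\le z\to y$ and $z\rs x\le z\rs y$. This does follow from commutativity, but needs its own computation: $x\le y$ gives $x\vee_1 y=1\rs y=y$, hence $y=y\vee_1 x=(y\to x)\rs x$ by commutativity, and therefore
\[
z\to y=z\to\bigl((y\to x)\rs x\bigr)=(y\to x)\rs(z\to x)\ \ge\ z\to x,
\]
using $(psBE_4)$ and Proposition \ref{psBE-40}$(3)$. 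With this lemma $(psBCK_1)$ follows at once: by Proposition \ref{psBE-40}$(1)$ one has $(y\to z)\rs(x\to z)=x\to\bigl((y\to z)\rs z\bigr)=x\to(y\vee_1 z)\ge x\to y$ since $y\le y\vee_1 z$, which is exactly $x\to y\le(y\to z)\rs(x\to z)$; the mirror computation gives $(psBCK_2)$. Without some such lemma the shuffling via $(psBE_4)$ alone does not close, because it only permutes operands and never produces the needed comparison between $x\to y$ and $x\to(y\vee_1 z)$.
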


\begin{theo} \label{comm-psBE-30} $\rm($\cite{Ciu33}$\rm)$ The class of commutative pseudo-BE algebras is 
equivalent to the class of commutative pseudo-BCK algebras. 
\end{theo}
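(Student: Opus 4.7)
The plan is to establish the equivalence as a mutual inclusion of the two classes, both regarded as classes of algebras of signature $(2,2,0)$ with primitives $\ra$, $\rs$, and $1$. A key observation that I would record first is that the terms $x\vee_1 y=(x\ra y)\rs y$ and $x\vee_2 y=(x\rs y)\ra y$ are built from the same primitives in both settings, so the commutativity identities $x\vee_1 y=y\vee_1 x$ and $x\vee_2 y=y\vee_2 x$ defining commutativity in Definition~\ref{comm-psBE-10} have literally the same meaning for pseudo-BCK algebras. Hence the adjective ``commutative'' transfers unchanged across any passage between the two classes.

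The forward inclusion is immediate from the preceding Theorem~\ref{comm-psBE-20}: every commutative pseudo-BE algebra already satisfies the pseudo-BCK axioms $(psBCK_1)$--$(psBCK_6)$, and since it carries with it the commutativity identities for $\vee_1$ and $\vee_2$, it is a commutative pseudo-BCK algebra.

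For the reverse inclusion, I would invoke Proposition~\ref{psBE-60}, which says that any pseudo-BCK algebra is a pseudo-BE algebra (the axioms $(psBE_1)$--$(psBE_5)$ are consequences of $(psBCK_1)$--$(psBCK_6)$, e.g.\ $(psBE_4)$ is Proposition~\ref{psBE-20}(1)). Applied to a commutative pseudo-BCK algebra, this yields a pseudo-BE algebra whose $\vee_1$ and $\vee_2$ still satisfy the commutativity identities, so it is a commutative pseudo-BE algebra in the sense of Definition~\ref{comm-psBE-10}.

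I do not expect a genuine obstacle here: the statement is essentially a packaging result, combining the substantive Theorem~\ref{comm-psBE-20} (whose nontrivial content is deriving $(psBCK_1)$, $(psBCK_2)$ and the antisymmetry axiom $(psBCK_6)$ from the pseudo-BE axioms together with commutativity) with the elementary Proposition~\ref{psBE-60}. The only point that warrants explicit mention in the write-up is the term-level compatibility of the two definitions of commutativity, so that the identities are not merely postulated in one signature and reinterpreted in another.
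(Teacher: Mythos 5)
Your proof is correct and is the expected argument: the paper itself states this theorem without proof (it is quoted from \cite{Ciu33}), and the intended justification is exactly the combination of Theorem \ref{comm-psBE-20} (the substantive direction) with Proposition \ref{psBE-60}, together with the observation that the commutativity identities for $\vee_1$ and $\vee_2$ are the same terms in the common signature $(2,2,0)$. Nothing further is needed.
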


\begin{rem} \label{comm-psBE-45} $\rm($\cite{Ciu33}$\rm)$ As a consequence of Theorem \ref{comm-psBE-30}, 
all results holding for commutative pseudo-BCK algebras also hold for commutative pseudo-BE algebras. 
We recall some of these results:\\ 
$(1)$ A pseudo-BE algebra $(A,\ra,\rs,1)$ is commutative if and only if $y\ra x=y\rs x=1$ implies 
$x\vee_1 y=x\vee_2 y=x$, for all $x, y\in A$ (\cite[Lemma 4.1.4]{Kuhr6}). \\
$(2)$ There are no proper finite commutative pseudo-BE algebras (\cite[Corollary 4.1.6]{Kuhr6}). 
In the other words, every finite commutative pseudo-BE algebra is a commutative BE-algebra. \\ 
$(3)$ If $(A,\ra,\rs,1)$ is a commutative pseudo-BE algebra, then  
$x\vee_1 y=x\vee_2 y$ (\cite[Corollary 1.2]{Ciu2}). \\
$(4)$ If $(A,\ra,\rs,1)$ is a commutative pseudo-BE algebra,  
then $(A,\vee,\ra,\rs,1)$ is a join-semilattice, where $x\vee y=x\vee_1 y=x\vee_2 y$ (\cite{Ior1}). \\ 
Note that generally, the underlying poset $(A,\le)$ of a pseudo-BE algebra $(A,\ra,\rs,1)$ need not be a join-semilattice (see Example \ref{psBE-50-10-10}). \\  
$(5)$ For a commutative pseudo-BE algebra $(A,\ra,\rs,1)$, if $(A,\le)$ is a meet-semilattice, then it is a distributive lattice (\cite[Corollary 4.1.9]{Kuhr6}). \\
$(6)$ The prime spectrum ${\mathcal DS_p}(A)$  of a commutative pseudo-BE algebra $(A,\ra,\rs,1)$ is equipped 
with the \emph{spectral topology} (see (4) and \cite{Kuhr6}). 
\end{rem}

\begin{rem} \label{comm-psBE-90} $\rm($\cite{Ciu33}$\rm)$
Pseudo-MV algebras were introduced by G. Georgescu and A. Iorgulescu in \cite{Geo2}, and independently by 
J. Rach{\accent23u}nek in \cite{Rac2}. It was proved that bounded commutative pseudo-BCK algebras are categorically 
isomorphic with pseudo-MV algebras (see \cite{Geo15}, \cite{Ior1}). 
By Theorem \ref{comm-psBE-30}, it follows that bounded commutative pseudo-BE algebras are also categorically 
isomorphic with pseudo-MV algebras. 
\end{rem}

\begin{prop} \label{comm-psBE-100} In any commutative pseudo-BE algebra $(A,\ra,\rs,1)$ 
the following hold for all $x, y\in A:$ \\ 
$(1)$ $x\ra y=y\vee_1 x\ra y$ and $x\rs y=y\vee_2 x\rs y;$ \\
$(2)$ $x\vee_1 y=(x\vee_1 y)\vee_1 x$ and $x\vee_2 y=(x\vee_2 y)\vee_2 x;$ \\
$(3)$ $x\le y$ implies $y\vee_1 x=y\vee_2 x=y$.  
\end{prop}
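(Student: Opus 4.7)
The plan is to leverage Theorem \ref{comm-psBE-20}, which tells us that any commutative pseudo-BE algebra is a pseudo-BCK algebra, so the full toolkit of Proposition \ref{psBE-20} is available. Each of the three parts then reduces to a short chain that combines one prior identity with the defining symmetry $x\vee_i y = y\vee_i x$ ($i=1,2$) of commutativity.

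For part (1), the key input is Proposition \ref{psBE-20}(6), which gives $x\vee_1 y\ra y = x\ra y$ and $x\vee_2 y\rs y = x\rs y$ in any pseudo-BCK algebra. Applying commutativity to rewrite $x\vee_1 y$ as $y\vee_1 x$ (and analogously for $\vee_2$) yields the stated identities immediately. No further work is required here.

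For part (2), I would compute $(x\vee_1 y)\vee_1 x$ by first swapping the arguments via commutativity: $(x\vee_1 y)\vee_1 x = x\vee_1 (x\vee_1 y)$. Unfolding the definition, this equals $\bigl(x\ra(x\vee_1 y)\bigr)\rs (x\vee_1 y)$. Since $x \le x\vee_1 y$ by Lemma \ref{psBE-100}(4), we have $x\ra (x\vee_1 y) = 1$, and then axiom $(psBE_3)$ collapses $1\rs (x\vee_1 y)$ to $x\vee_1 y$. The $\vee_2$ case is completely parallel, using the analogous bound $x\le x\vee_2 y$ and axiom $(psBE_3)$.

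For part (3), if $x\le y$ then Lemma \ref{psBE-100}(2) gives $x\vee_1 y = x\vee_2 y = y$, and commutativity rewrites these as $y\vee_1 x = y\vee_2 x = y$, finishing the claim. There is no real obstacle in this proposition; the entire content is a bookkeeping exercise in which the principal point is simply to recognize that commutativity converts the one-sided identities of Proposition \ref{psBE-20}(6) and Lemma \ref{psBE-100} into their symmetric counterparts. The author's remark that the proof is straightforward is accurate.
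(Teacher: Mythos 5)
Your proof is correct. The paper itself gives no details here: it simply reduces to pseudo-BCK algebras via Theorem \ref{comm-psBE-20} and cites an external result (\cite[Th.~3.7]{Ciu7}), whereas you supply a self-contained verification. Your argument is sound in each part: (1) is exactly Proposition \ref{psBE-20}(6) (available after the reduction to pseudo-BCK algebras) combined with the commutativity identity $x\vee_i y=y\vee_i x$; (2) and (3) in fact never need the pseudo-BCK structure at all, since they follow from Lemma \ref{psBE-100}(2),(4) together with commutativity --- a point worth noting, as it makes those two parts strictly more elementary than the citation suggests. The only cosmetic remark is that in part (2) you could shortcut the unfolding of the definition by applying Lemma \ref{psBE-100}(2) directly to $x\le x\vee_1 y$, but what you wrote is equivalent and equally valid.
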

\begin{proof} It follows by Theorem \ref{comm-psBE-20} and \cite[Th. 3.7]{Ciu7}.  
\end{proof}

\begin{prop} \label{comm-psBE-110} If the pseudo-BE algebra $A$ is commutative, then  $\mathcal{IS}^{(I)}(A)=\mathcal{IS}^{(II)}(A)$, but the converse is not always true.
\end{prop}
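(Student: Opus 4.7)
The plan is to handle the two claims separately, both of which reduce to a direct inspection of the relevant definitions.

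For the forward implication, I would simply compare the axioms $(is_2)$ and $(is^{'}_2)$ defining type I and type II state operators. The only difference between them is that $(is_2)$ uses $x\vee_1 y$ and $x\vee_2 y$ on the right-hand side, while $(is^{'}_2)$ uses $y\vee_1 x$ and $y\vee_2 x$. Commutativity of $A$, by Definition \ref{comm-psBE-10}, means exactly that $x\vee_1 y=y\vee_1 x$ and $x\vee_2 y=y\vee_2 x$ for all $x,y\in A$. Substituting, $(is_2)$ and $(is^{'}_2)$ become literally the same identity, so a map $\mu$ satisfying $(is_1)$ and $(is_3)$ is a type I state operator if and only if it is a type II state operator. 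Hence $\mathcal{IS}^{(I)}(A)=\mathcal{IS}^{(II)}(A)$.

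For the converse, I need to exhibit a non-commutative pseudo-BE algebra $A$ with $\mathcal{IS}^{(I)}(A)=\mathcal{IS}^{(II)}(A)$. The natural candidate is the pseudo-BE(A) algebra $A$ of Example \ref{psBE-70-30}: Example \ref{is-psBE-50} already records that $\mathcal{IS}^{(I)}(A)=\mathcal{IS}^{(II)}(A)=\{\mu_1,\ldots,\mu_{10}\}$, so the equality of the two sets of internal states holds by direct computation. It remains to verify that this $A$ is not commutative. Here I would invoke Theorem \ref{comm-psBE-20}: every commutative pseudo-BE algebra is a pseudo-BCK algebra. But Example \ref{psBE-70-30} explicitly notes that $b\to c=1$ and $c\to b=1$ while $b\neq c$, so axiom $(psBCK_6)$ fails and $A$ is not a pseudo-BCK algebra, hence not commutative. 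This completes the counterexample.

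I expect no real obstacle in either direction. The forward direction is a one-line substitution once the definitions are unfolded, and the converse is witnessed by invoking the existing examples plus the implication "commutative $\Rightarrow$ pseudo-BCK" already proved as Theorem \ref{comm-psBE-20}. The only care needed is to point explicitly at the $\vee_1$/$\vee_2$ asymmetry in the two axioms, to make clear that commutativity in the sense of Definition \ref{comm-psBE-10} is precisely what collapses them.
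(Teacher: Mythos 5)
Your proposal is correct and follows essentially the same route as the paper: the forward direction is the observation that commutativity collapses $(is_2)$ and $(is^{'}_2)$ into the same axiom, and the converse is witnessed by the algebra of Example \ref{psBE-70-30} via Example \ref{is-psBE-50}. The only (harmless) difference is that you certify non-commutativity indirectly through Theorem \ref{comm-psBE-20} and the failure of $(psBCK_6)$, whereas the paper elsewhere checks it directly (e.g.\ $a\vee_1 d=d\neq a=d\vee_1 a$); both are valid.
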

\begin{proof}
If $A$ is commutative, then obviously $\mathcal{IS}^{(I)}(A)=\mathcal{IS}^{(II)}(A)$. \\
In the case of pseudo-BE(A) algebra $A$ from Example \ref{is-psBE-50}, we have $\mathcal{IS}^{(I)}(A)=\mathcal{IS}^{(II)}(A)$, but $A$ is not commutative. 
\end{proof}

\begin{rem} \label{comm-psBE-120} If $A$ is a pseudo-BCK algebra such that   $\mathcal{IS}^{(I)}(A)=\mathcal{IS}^{(II)}(A)$, then $A$ is commutative (\cite[Prop. 5.4]{Ciu7}). 
As we can see in Proposition \ref{comm-psBE-110}, this result is not valid in the case of pseudo-BE algebras.  
\end{rem}

\begin{prop} \label{comm-psBE-130} If $(A,\ra,\rs,1)$ is a linearly ordered commutative pseudo-BE algebra, 
then $\mathcal{IS}^{(I)}(A)\subseteq \mathcal{SMO}(A)$ and $\mathcal{IS}^{(II)}(A)\subseteq \mathcal{SMO}(A)$. 
\end{prop}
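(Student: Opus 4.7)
The plan is to assemble the statement from two results already proved in the paper. First, I will invoke Proposition \ref{comm-psBE-110}: since $A$ is commutative, the two notions of internal state coincide, i.e.\ $\mathcal{IS}^{(I)}(A)=\mathcal{IS}^{(II)}(A)$. This collapses the problem to the single inclusion $\mathcal{IS}^{(II)}(A)\subseteq \mathcal{SMO}(A)$.

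Second, I will verify that the hypotheses of Proposition \ref{smo-psBE-70} are available. By Theorem \ref{comm-psBE-20}, any commutative pseudo-BE algebra is a pseudo-BCK algebra, and by Proposition \ref{psBE-20}(3) every pseudo-BCK algebra is antitone in the first variable, so the (A) condition holds on $A$; together with the linear-order assumption, Proposition \ref{smo-psBE-70} applies and gives $\mathcal{IS}^{(II)}(A)\subseteq \mathcal{SMO}(A)$.

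Chaining the two steps yields
\[
\mathcal{IS}^{(I)}(A)=\mathcal{IS}^{(II)}(A)\subseteq \mathcal{SMO}(A),
\]
which is precisely the pair of inclusions asserted.

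There is no real obstacle in this proof, only a bookkeeping point: one must remember that Proposition \ref{smo-psBE-70} is invoked on the pseudo-BE(A) structure, and the passage through Theorem \ref{comm-psBE-20} is what legitimises this. Everything else is a direct combination of the cited facts.
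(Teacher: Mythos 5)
Your proof is correct, and it takes a somewhat different and more economical route than the paper's. The paper obtains the type II inclusion exactly as you do, from Proposition \ref{smo-psBE-70}, but for the type I inclusion it reruns the whole verification: it takes $\mu\in\mathcal{IS}^{(I)}(A)$, notes $\mu^2=\mu$ from Proposition \ref{is-psBE-20}(2), treats the case $x\le y$ as in Proposition \ref{smo-psBE-70}, and in the case $y\le x$ uses commutativity together with Lemma \ref{psBE-100}(2) to pass from $\mu(x\vee_1 y)\ra\mu(y)$ to $\mu(y\vee_1 x)\ra\mu(y)=\mu(x)\ra\mu(y)$. You instead observe that commutativity already forces $\mathcal{IS}^{(I)}(A)=\mathcal{IS}^{(II)}(A)$ (Proposition \ref{comm-psBE-110}), which reduces everything to the single type II inclusion; commutativity is consumed once, at the level of the axioms $(is_2)$ versus $(is^{'}_2)$, rather than inside the case analysis. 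Your extra check that the $(A)$ condition holds --- via Theorem \ref{comm-psBE-20} and Proposition \ref{psBE-20}(3) --- is a welcome piece of care: Proposition \ref{smo-psBE-70} is stated for an arbitrary linearly ordered pseudo-BE algebra, yet its proof silently invokes Proposition \ref{is-psBE-20}, which is stated for pseudo-BE(A) algebras, so your verification actually tightens the paper's own chain of citations. Both arguments are valid; yours is shorter and reuses results already proved, while the paper's keeps the type I case self-contained and makes the role of commutativity visible in the computation.
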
 
\begin{proof} 
By Proposition \ref{smo-psBE-70}, we have $\mathcal{IS}^{(II)}(A)\subseteq \mathcal{SMO}(A)$. \\
Let $\mu\in \mathcal{IS}^{(I)}(A)$. By Proposition \ref{is-psBE-20}$(2)$, $\mu^2=\mu$. \\ 
Consider $x, y\in A$ such that $x\le y$. Similarly as in Proposition \ref{smo-psBE-70}, we get \\
$\hspace*{2cm}$ $\mu(x\ra y)=\mu(x)\ra \mu(y)$ and $\mu(x\rs y)=\mu(x)\rs \mu(y)$. \\
If $y\le x$, since $A$ is commutative, applying Lemma \ref{psBE-100}$(2)$ we have: \\ 
$\hspace*{2cm}$ $\mu(x\ra y)=\mu(x\vee_1 y)\ra \mu(y)=\mu(y\vee_1 x)\ra \mu(y)=\mu(x)\ra \mu(y)$ and \\
$\hspace*{2cm}$ $\mu(x\rs y)=\mu(x\vee_2 y)\ra \mu(y)=\mu(y\vee_2 x)\rs \mu(y)=\mu(x)\rs \mu(y)$. \\ 
It follows that $\mu\in \mathcal{SMO}(A)$, hence $\mathcal{IS}^{(I)}(A)\subseteq  \mathcal{SMO}(A)$. 
\end{proof}

\begin{theo} \label{comm-psBE-140} An algebra $(A,\ra,\rs,1)$ of the type $(2,2,0)$ is a commutative 
pseudo-BE algebra if and only if it satisfies the following identities for all $x, y, z\in A:$ \\
$(P_1)$ $1\ra x=1\rs x=x;$ \\
$(P_2)$ $x\ra 1=x\rs 1=1;$ \\
$(P_3)$ $(x\ra z)\rs (y\ra z)=(z\ra x)\rs (y\ra x)$ and \\ 
$\hspace*{0.7cm}$ $(x\rs z)\ra (y\rs z)=(z\rs x)\ra (y\rs x);$ \\
$(P_4)$ $x\ra (y\rs z)=y\rs (x\ra z);$ \\
$(P_5)$ $x\ra y=1$ iff $x\rs y=1$. 
\end{theo}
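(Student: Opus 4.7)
The plan is to verify both implications of the biconditional, with the forward direction reducing to a single non-trivial identity and the reverse direction hinging on two well-chosen substitutions in $(P_3)$.

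For the $(\Rightarrow)$ direction, suppose $A$ is a commutative pseudo-BE algebra. The identities $(P_1)$, $(P_2)$, $(P_4)$, $(P_5)$ are literally the axioms $(psBE_3)$, $(psBE_2)$, $(psBE_4)$, $(psBE_5)$, so the only substantive task is to check $(P_3)$. For its first half, I apply $(psBE_4)$ with the substitution $x\mapsto y$, $y\mapsto x\ra z$, $z\mapsto z$ to rewrite
\[
(x\ra z)\rs(y\ra z)\;=\;y\ra\bigl((x\ra z)\rs z\bigr)\;=\;y\ra(x\vee_1 z),
\]
and analogously $(z\ra x)\rs(y\ra x)=y\ra(z\vee_1 x)$; commutativity $x\vee_1 z=z\vee_1 x$ then closes the case. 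The second half of $(P_3)$ is handled by the same manoeuvre using Proposition~\ref{psBE-40}(1) in place of $(psBE_4)$, together with $x\vee_2 z=z\vee_2 x$.

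For the $(\Leftarrow)$ direction, $(P_1)$, $(P_2)$, $(P_4)$, $(P_5)$ immediately furnish $(psBE_3)$, $(psBE_2)$, $(psBE_4)$, $(psBE_5)$, so I only need $(psBE_1)$ and the two commutativity identities. Substituting $y=1$ in the two parts of $(P_3)$ and simplifying with $(P_1)$ instantly produces
\[
x\vee_1 z \;=\; (x\ra z)\rs z \;=\; (z\ra x)\rs x \;=\; z\vee_1 x,
\]
and analogously $x\vee_2 z=z\vee_2 x$. For $(psBE_1)$, I substitute $z=1$ in the first half of $(P_3)$; using $(P_2)$ on the left and $(P_1)$ on the right (and $1\rs 1=1$, which is $(P_1)$ at $x=1$), the equation collapses to $x\rs(y\ra x)=1$. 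Specialising $y=1$ and using $(P_1)$ again gives $x\rs x=1$, and $(P_5)$ upgrades this to $x\ra x=1$.

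The crux of the argument is the observation that $(P_3)$, by virtue of its free parameter $y$, secretly packages both commutativity and the reflexivity law $x\ra x=1$ into a single identity: setting $y=1$ extracts commutativity of $\vee_1$ and $\vee_2$, while setting $z=1$ (and then $y=1$) extracts $(psBE_1)$. I anticipate no real obstacle beyond spotting these specialisations; everything else is a routine rewriting via $(psBE_4)$ and Proposition~\ref{psBE-40}(1).
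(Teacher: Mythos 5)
Your proof is correct and follows essentially the same route as the paper: the forward direction rewrites $(P_3)$ via $(psBE_4)$ and commutativity of $\vee_1,\vee_2$ exactly as the paper does, and the reverse direction extracts commutativity by the same substitution $y:=1$ in $(P_3)$. The only (harmless) variation is your derivation of $(psBE_1)$ — setting $z:=1$ to obtain $x\rs(y\ra x)=1$ and then invoking $(P_5)$ — where the paper instead computes $x\ra x=(1\rs x)\ra(1\rs x)$ directly from $(P_1)$--$(P_3)$; both are valid two-step specialisations of $(P_3)$.
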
 
\begin{proof}
Assume that $A$ is a commutative pseudo-BE algebra. 
Conditions $(P_1)$, $(P_2)$, $(P_4)$ and $(P_5)$ are in fact axioms $(psBE_3)$, $(psBE_2)$, $(psBE_4)$ and 
$(psBE_5)$, respectively. \\ 
Using $(psBE_4)$ and applying the commutativity we get: \\ 
$\hspace*{2cm}$ $(x\ra z)\rs (y\ra z)=y\ra ((x\ra z)\rs z)=y\ra ((z\ra x)\rs x)$ \\ 
$\hspace*{5.3cm}$ $=(z\ra x)\rs (y\ra x)$, \\
$\hspace*{2cm}$ $(x\rs z)\ra (y\rs z)=y\rs ((x\rs z)\ra z)=y\ra ((z\rs x)\ra x)$ \\ 
$\hspace*{5.3cm}$ $=(z\rs x)\ra (y\rs x)$, \\
that is $(P_3)$. \\
Conversely, let $(A,\rightarrow,\rightsquigarrow,1)$ be an algebra satisfying conditions $(P_1)-(P_5)$. \\ 
Axioms $(psBE_2)$, $(psBE_3)$, $(psBE_4)$ and $(psBE_5)$ are in fact conditions $(P_2)$, $(P_1)$, $(P_4)$ and 
$(P_5)$, respectively. \\
By $(P_1)$, $(P_3)$ and  $(P_2)$ we have: \\
$\hspace*{2cm}$ $x\ra x=(1\rs x)\ra (1\rs x)=(x\rs 1)\ra (x\rs 1=1\ra 1=1$, \\
$\hspace*{2cm}$ $x\rs x=(1\ra x)\rs (1\ra x)=(x\ra 1)\rs (x\ra 1=1\rs 1=1$, \\
that is $(psBE_1)$. \\ 
It follows that $A$ is a pseudo-BE algebra. \\
Using axioms $(P_1)$ and $(P_3)$ we get:\\ 
$\hspace*{2cm}$ $(x\ra y)\rs y=(x\ra y)\rs (1\ra y)=(y\ra x)\rs (1\ra x)=(y\ra x)\rs x$, \\
$\hspace*{2cm}$ $(x\rs y)\ra y=(x\rs y)\ra (1\rs y)=(y\rs x)\ra (1\rs x)=(y\rs x)\ra x$, \\
hence $A$ is commutative. 
\end{proof}

\begin{theo} \label{comm-psBE-150} An algebra $(A,\ra,\rs,1)$ of the type $(2,2,0)$ is a 
commutative pseudo-BE algebra if and only if it satisfies the following identities for all $x, y, z\in A:$ \\
$(Q_1)$ $(x\ra 1)\rs y=(x\rs 1)\ra y=y;$ \\
$(Q_2)$ $(x\ra z)\rs (y\ra z)=(z\ra x)\rs (y\ra x)$ and \\ 
$\hspace*{0.8cm}$ $(x\rs z)\ra (y\rs z)=(z\rs x)\ra (y\rs x);$ \\
$(Q_3)$ $x\ra (y\rs z)=y\rs (x\ra z);$ \\
$(Q_4)$ $x\ra y=1$ iff $x\rs y=1$. 
\end{theo}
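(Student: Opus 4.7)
The plan is to reduce to Theorem~\ref{comm-psBE-140}: since $(Q_2),(Q_3),(Q_4)$ are literally $(P_3),(P_4),(P_5)$, only $(Q_1)$ versus the pair $(P_1),(P_2)$ requires genuine attention. The forward implication is immediate: assuming $A$ is a commutative pseudo-BE algebra, $(P_2)$ gives $x\ra 1=x\rs 1=1$, and then by $(P_1)$ we have $(x\ra 1)\rs y=1\rs y=y$ and $(x\rs 1)\ra y=1\ra y=y$, which is $(Q_1)$.

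For the converse I will extract $(P_1)$ and $(P_2)$ from $(Q_1)$--$(Q_4)$ by a two-step bootstrap. Specialising $y:=1$ in $(Q_1)$ yields $(x\ra 1)\rs 1=1$ and $(x\rs 1)\ra 1=1$, and applying $(Q_4)$ swaps the arrows to give $(x\ra 1)\ra 1=1$ and $(x\rs 1)\rs 1=1$. Re-substituting these into $(Q_1)$---replacing the free variable ``$x$'' of $(x\rs 1)\ra y=y$ by $x\ra 1$, and of $(x\ra 1)\rs y=y$ by $x\rs 1$---the outer expressions collapse to $1\ra y=y$ and $1\rs y=y$, establishing $(P_1)$. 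For $(P_2)$, using the first half of $(Q_2)$ with $z=y=1$ together with $(P_1)$ and $(Q_1)$ gives $1=(x\ra 1)\rs (1\ra 1)=(1\ra x)\rs (1\ra x)=x\rs x$, and the second half analogously yields $x\ra x=1$. Then $(Q_2)$ with $z=1,\ y=x$ reads $(x\ra 1)\rs (x\ra 1)=(1\ra x)\rs (x\ra x)$, whose left side equals $x\ra 1$ by $(Q_1)$ and whose right side equals $x\rs 1$ by $(P_1)$ and $x\ra x=1$; hence $x\ra 1=x\rs 1$. Denoting this common value by $u$, $(Q_1)$ makes $u$ a left unit for $\rs$ so that $u\rs u=u$, while $x\ra x=1$ forces $u\ra u=1$ and then $(Q_4)$ gives $u\rs u=1$; consequently $u=1$, which is $(P_2)$. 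Theorem~\ref{comm-psBE-140} now concludes.

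The main obstacle is the converse: the substitution chain that pulls $(P_1)$ out of $(Q_1)$ is non-obvious---one must invoke $(Q_4)$ as an arrow-swap before $(Q_1)$ can be applied a second time---and the collapse $x\ra 1=x\rs 1=:u$ followed by the ``unique left unit'' argument is the crucial observation that lets the compressed axiom $(Q_1)$ do the double duty of $(P_1)$ and $(P_2)$.
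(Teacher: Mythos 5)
Your proof is correct, and the substance of your converse direction---the double application of $(Q_1)$ that yields $1\ra y=y$ and $1\rs y=y$, followed by $(Q_2)$ with $z=y=1$ to get $x\ra x=x\rs x=1$---is essentially the paper's own computation; the only organizational difference is that you then hand off to Theorem~\ref{comm-psBE-140} after securing $(P_1)$ and $(P_2)$, whereas the paper finishes self-containedly by deriving $(psBE_2)$ and the commutativity identity $(x\ra y)\rs y=(y\ra x)\rs x$ directly from $(Q_1)$ and $(Q_2)$. Two of your steps are superfluous, though not wrong. First, the $(Q_4)$ arrow-swap is not what the collapse actually uses: substituting $x\ra 1$ for the variable of $(x\rs 1)\ra y=y$ produces $((x\ra 1)\rs 1)\ra y=y$, and the inner term is killed by the un-swapped identity $(x\ra 1)\rs 1=1$ from your first specialization, not by $(x\ra 1)\ra 1=1$; so $(P_1)$ follows from $(Q_1)$ alone. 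Second, once $a\rs a=1$ holds for all $a$, axiom $(Q_1)$ gives $x\ra 1=(x\ra 1)\rs(x\ra 1)=1$ at once (and dually $x\rs 1=1$), so the detour through $x\ra 1=x\rs 1=:u$ and the left-unit argument can be deleted; this one-liner is exactly how the paper obtains $(psBE_2)$.
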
 
\begin{proof}
Assume that $A$ is a commutative pseudo-BE algebra. 
Condition $(Q_1)$ follows from $(psBE_2)$ and $(psBE_3)$, while conditions $(Q_3)$ and $(Q_4)$ 
are axioms $(psBE_4)$ and $(psBE_5)$, respectively. \\
Using $(psBE_4)$ and based on the commutativity we get: \\ 
$\hspace*{2cm}$ $(x\ra z)\rs (y\ra z)=y\ra ((x\ra z)\rs z)=y\ra ((z\ra x)\rs x)$ \\ 
$\hspace*{5.3cm}$ $=(z\ra x)\rs (y\ra x)$, \\
$\hspace*{2cm}$ $(x\rs z)\ra (y\rs z)=y\rs ((x\rs z)\ra z)=y\ra ((z\rs x)\ra x)$ \\ 
$\hspace*{5.3cm}$ $=(z\rs x)\ra (y\rs x)$, \\
hence $(Q_2)$ is satisfied. \\
Conversely, let $(A,\ra,\rs,1)$ be an algebra satisfying conditions $(Q_1)-(Q_4)$. \\ 
Axioms $(psBE_4)$ and $(psBE_5)$ are in fact conditions $(Q_3)$ and $(Q_4)$, respectively. \\
Applying $(Q_1)$ twice we have: \\
$\hspace*{2cm}$ $1\ra x=((1\ra 1)\rs 1)\ra x=x;$ \\
$\hspace*{2cm}$ $1\rs x=((1\rs 1)\ra 1)\rs x=x$, \\ 
thus $(psBE_3)$ is satisfied. \\
Using $(psBE_3)$, $(Q_1)$ and $(Q_2)$ we get: \\
$\hspace*{2cm}$ $x\ra x=(1\rs x)\ra (1\rs x)=(1\rs(1\ra x))\ra (1\rs (1\ra x))$ \\ 
$\hspace*{3.1cm}$ $=((1\ra x)\rs 1)\ra (1\rs 1)=1\ra 1=1$, \\
$\hspace*{2cm}$ $x\rs x=(1\ra x)\rs (1\ra x)=(1\ra(1\rs x))\rs (1\ra (1\rs x))$ \\ 
$\hspace*{3.1cm}$ $=((1\rs x)\ra 1)\rs (1\ra 1)=1\rs 1=1$, \\
that is $(psBE_1)$. \\
By $(Q_1)$ and $(psBE_1)$ we have: \\
$\hspace*{2cm}$ $x\ra 1=(x\ra 1)\rs (x\ra 1)=1$, \\
$\hspace*{2cm}$ $x\rs 1=(x\rs 1)\ra (x\rs 1)=1$, \\
hence $(psBE_2)$ is satisfied. \\
Thus $A$ is a pseudo-BE algebra. \\
Applying $(Q_1)$ and $(Q_2)$ we have: \\
$\hspace*{2cm}$ $(x\ra y)\rs y=(x\ra y)\rs ((y\rs 1)\ra y)$ \\
$\hspace*{4.2cm}$ $=(y\ra x)\rs ((y\rs 1)\ra x)=(y\ra x)\rs x$, \\                   
$\hspace*{2cm}$ $(x\rs y)\ra y=(x\rs y)\ra ((y\ra 1)\rs y)$ \\
$\hspace*{4.2cm}$ $=(y\rs x)\ra ((y\ra 1)\rs x)=(y\rs x)\ra x$. \\                   
It follows that $A$ is commutative. 
\end{proof}

$\vspace*{5mm}$

\section{Fantastic deductive systems of pseudo-BE algebras}

In this section we define the fantastic deductive systems of pseudo-BE algebras and we investigate their 
properties. 
It is proved that, if a pseudo-BE(A) algebra $A$ is commutative, then all deductive systems of $A$ are fantastic. 
We show that the kernel of a Bosbach state (state-morphism, measure, type II state operator) is a fantastic 
deductive system. \\
We define the notion of an involutive deductive system of a bounded pseudo-BE algebra and we prove that every 
fantastic deductive system on a bounded pseudo-BE algebra is an involutive deductive system. 
It is also proved that the kernel of a Bosbach state on a bounded pseudo-BE algebra is an involutive deductive 
system. 

\begin{Def} \label{comm-ds-70} Let $(A,\rightarrow,\rightsquigarrow,1)$ be a pseudo-BE algebra and 
let $D\in {\mathcal DS}(A)$. Then $D$ is called \emph{fantastic} if it satisfies the following conditions 
for all $x, y\in A$: \\ 
$(cds_1)$ $y\rightarrow x\in D$ implies $x\vee_1 y\rightarrow x\in D;$ \\
$(cds_2)$ $y\rightsquigarrow x\in D$ implies $x\vee_2 y\rightsquigarrow x\in D$. 
\end{Def}

We will denote by ${\mathcal DS}_f(A)$ the set of all fantastic deductive systems of a pseudo-BE algebra $A$. 

\begin{prop} \label{comm-ds-80} Let $(A,\rightarrow,\rightsquigarrow,1)$  be a pseudo-BE algebra and 
let $D\subseteq A$. Then $D\in {\mathcal DS}_f(A)$ if and only if it satisfies the following conditions 
for all $x, y, z\in A$: \\
$(1)$ $1\in D;$ \\
$(2)$ $z\rightarrow (y\rightarrow x)\in D$ and $z\in D$ implies $x\vee_1 y\rightarrow x\in D;$ \\
$(3)$ $z\rightsquigarrow (y\rightsquigarrow x)\in D$ and $z\in D$ implies $x\vee_2 y\rightsquigarrow x\in D$.
\end{prop}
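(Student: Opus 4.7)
The plan is to prove the two directions separately, using the definition of a fantastic deductive system together with the clauses $(cds_1)$, $(cds_2)$ one way, and exploiting clever substitutions (especially $z:=1$ and intermediate variable $1$) the other way.

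For the forward direction, assume $D\in {\mathcal DS}_f(A)$. Clause (1) is just $(ds_1)$. For (2), suppose $z\ra(y\ra x)\in D$ and $z\in D$; then $(ds_2)$ gives $y\ra x\in D$, and $(cds_1)$ yields $x\vee_1 y\ra x\in D$. Clause (3) follows identically using $(ds_2')$ and $(cds_2)$.

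For the backward direction, assume $D$ satisfies (1), (2), (3). I must check the two deductive-system axioms and the two fantastic-clauses. The axiom $(ds_1)$ is immediate from (1). To obtain $(ds_2)$, suppose $x\in D$ and $x\ra y\in D$; by $(psBE_3)$, $x\ra y=x\ra(1\ra y)$, so applying clause (2) with $z:=x$ and with $1$ playing the role of the middle variable gives $y\vee_1 1\ra y\in D$. By Lemma~\ref{psBE-100}(1), $y\vee_1 1=1$, so $y\vee_1 1\ra y=1\ra y=y$ using $(psBE_3)$, and hence $y\in D$. The analogous argument via clause (3) together with $y\vee_2 1=1$ establishes $(ds_2')$, so $D\in {\mathcal DS}(A)$. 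Finally, to obtain $(cds_1)$, suppose $y\ra x\in D$. Since $1\in D$ and $1\ra(y\ra x)=y\ra x\in D$, clause (2) applied with $z:=1$ yields $x\vee_1 y\ra x\in D$; $(cds_2)$ is derived analogously from clause (3).

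The only non-routine point is the derivation of $(ds_2)$: one must spot that the auxiliary identity $y\vee_1 1=1$ (combined with $(psBE_3)$) collapses the consequent $y\vee_1 1\ra y$ down to $y$. Once this substitution trick is in place, the rest of the argument is bookkeeping and perfectly mirrors the operation $\rs$ for the dual statements, so no additional work beyond the symmetric argument via $y\vee_2 1=1$ is required.
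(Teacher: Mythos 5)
Your proof is correct and follows essentially the same route as the paper's: the forward direction via $(ds_2)$ then $(cds_1)$/$(cds_2)$, and the backward direction via the substitutions $x\ra y=x\ra(1\ra y)$ with $y\vee_1 1=1$ to recover $(ds_2)$, and $z:=1$ to recover the fantastic clauses. The only (harmless) difference is that you also verify $(ds_2')$ separately, which is redundant since $(ds_1)$ together with $(ds_2)$ already characterizes deductive systems.
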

\begin{proof}
Consider $D\in {\mathcal DS}_f(A)$. Since $1\in D$, condition $(1)$ is satisfied. \\
Let $x, y, z\in A$ such that $z\rightarrow (y\rightarrow x), z\in D$.  
Since $D\in {\mathcal DS}(A)$, we have $y\rightarrow x\in D$, hence 
$x\vee_1 y\ra x\in D$, that is condition $(2)$. \\ 
Similarly from $z\rightsquigarrow (y\rightsquigarrow x)\in D$ and $z\in D$ we get 
$x\vee_2 y\rs x\in D$, that is condition $(3)$.  \\
Conversely, let $D\subseteq A$ satisfying conditions $(1)$, $(2)$ and $(3)$. Obviously $1\in D$. \\ 
Let $x, y\in D$ such that $x\rightarrow y, x\in D$. \\
Since $x\rightarrow (1\rightarrow y)=x\rightarrow y\in D$, using $(2)$ we have $y=y\vee_1 1\ra y\in D$. \\
It follows that $D\in {\mathcal DS}(A)$. \\
Let $x, y\in A$ such that $y\rightarrow x\in D$. Since $1\rightarrow (y\rightarrow x)\in D$ and $1\in D$, 
by $(2)$ we get $x\vee_1 y\ra x\in D$. \\ 
Similarly from $y\rightsquigarrow x\in D$ we get $x\vee_2 y\rs x\in D$. \\  
We conclude that $D\in {\mathcal DS}_f(A)$. 
\end{proof}

\begin{exs} \label{comm-ds-90} 
$(1)$ Let $A=\{1,a,b,c,d,e\}$ be the pseudo-BE(A) algebra from Example \ref{psBE-50-10}. \\ 
One can see that: \\
$\hspace*{2cm}$ ${\mathcal DS}(A)=\{\{1\},\{1,e\},\{1,a,e\},\{1,b,c,d\},\{1,b,c,d,e\},A\}$, \\
$\hspace*{2cm}$ ${\mathcal DS}_n(A)=\{\{1\},\{\{1,e\},\{1,a,e\},\{1,b,c,d\},\{1,b,c,d,e\},A\}$, \\  
$\hspace*{2cm}$ ${\mathcal DS}_f(A)=\{\{1,e\},\{1,a,e\},\{1,b,c,d,e\},A\}$. \\
$(2)$ Let $A=\{1,a,b,c,d\}$ be the pseudo-BE(A) algebra from Example \ref{psBE-70-30}. Then: \\ 
$\hspace*{2cm}$ ${\mathcal DS}(A)={\mathcal DS}_n(A)={\mathcal DS}_f(A)=\{\{1\},\{1,a,d\},\{1,b,c\},A\}$. 
\end{exs}

\begin{prop} \label{comm-ds-110} Let $(A,\ra,\rs,1)$ be a pseudo-BE(A) algebra and 
$D\in {\mathcal DS}_f(A)$, $E\in {\mathcal DS}(A)$ such that $D\subseteq E$. Then $E\in {\mathcal DS}_f(A)$. 
\end{prop}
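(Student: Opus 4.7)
I propose to verify the two conditions $(cds_1)$ and $(cds_2)$ of Definition \ref{comm-ds-70} for $E$. Since the $\ra$/$\vee_1$ and $\rs$/$\vee_2$ sides of the pseudo-BE axioms are completely symmetric, it suffices to establish $(cds_1)$: whenever $y \ra x \in E$, show $(x \vee_1 y) \ra x \in E$. The $(cds_2)$ case then follows by the identical argument applied with $\rs$ and $\vee_2$ in place of $\ra$ and $\vee_1$.

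The strategy is to use Proposition \ref{comm-ds-80}'s characterization of fantastic deductive systems applied to $D$, together with the containment $D \subseteq E$ and the modus-ponens closure of $E$. Given $y \ra x \in E$, the easy case is $y \ra x \in D$: the fantastic closure of $D$ then yields $(x \vee_1 y) \ra x \in D \subseteq E$ directly. The substantive case is $y \ra x \in E \setminus D$, where one must produce an auxiliary element in $D$ that bridges from $y \ra x \in E$ to $(x \vee_1 y) \ra x \in E$ via the deductive closure of $E$.

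The main obstacle is identifying the appropriate bridge element in $D$. The naive candidate $(y \ra x) \ra ((x \vee_1 y) \ra x)$ is not always in $D$ -- for instance, in Example \ref{psBE-50-10} with $y = c$, $x = a$ and $D = \{1, e\}$, this element equals $a \notin D$. The delicate step is to exploit the $(A)$ condition, which via $y \le x \vee_1 y$ from Lemma \ref{psBE-100}(4) yields $(x \vee_1 y) \ra x \le y \ra x$, together with the identities in Proposition \ref{psBE-40}(3)--(4) and the equivalent characterization in Proposition \ref{comm-ds-80}, to construct a suitable witness $z \in D$ for which $z \ra (y \ra x) \in D$ and $z \in D$ force $(x \vee_1 y) \ra x \in D \subseteq E$ regardless of whether $y \ra x$ itself lies in $D$. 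Once $(cds_1)$ is verified for $E$ in this manner, the $(cds_2)$ condition is immediate by the dual argument.
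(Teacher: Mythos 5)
Your plan has a genuine gap, and it sits exactly at the step you yourself flag as ``delicate.'' In the case $y\ra x\in E\setminus D$ you propose to construct a witness $z\in D$ with $z\ra(y\ra x)\in D$ and then invoke Proposition \ref{comm-ds-80}$(2)$ to force $x\vee_1 y\ra x\in D\subseteq E$. But no such $z$ can exist in that case: since $D$ is a deductive system, $z\in D$ together with $z\ra(y\ra x)\in D$ already yields $y\ra x\in D$ by $(ds_2)$, contradicting the case hypothesis (indeed, the existence of such a $z$ is \emph{equivalent} to $y\ra x\in D$, as one sees by taking $z=1$). Moreover the conclusion you aim for is too strong: in Example \ref{psBE-50-10} take $D=\{1,e\}$, $E=\{1,a,e\}$, $x=a$, $y=c$; then $y\ra x=a\in E\setminus D$ while $x\vee_1 y\ra x=1\ra a=a\notin D$, so no argument can place $x\vee_1 y\ra x$ in $D$ here --- only membership in $E$ is available. (Incidentally, in that same example your ``naive candidate'' $(y\ra x)\ra(x\vee_1 y\ra x)$ computes to $a\ra a=1$, not to $a$.)

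The idea you are missing is to apply the fantastic property of $D$ not to the pair $(x,y)$ but to the pair $(u\rs x,\,y)$ with $u:=y\ra x$. The needed hypothesis $y\ra(u\rs x)=y\ra((y\ra x)\rs x)=1\in D$ holds unconditionally (Proposition \ref{psBE-40}$(4)$), so with no case split $D$ fantastic gives $(u\rs x)\vee_1 y\ra(u\rs x)\in D\subseteq E$. One then descends from $u\rs x$ back to $x$ entirely inside $E$: by $(psBE_4)$ this element equals $u\rs\bigl((u\rs x)\vee_1 y\ra x\bigr)$, and since $u\in E$, closure of $E$ under $(ds^{'}_2)$ yields $(u\rs x)\vee_1 y\ra x\in E$; finally, from $x\le u\rs x$ repeated use of condition (A) gives $x\vee_1 y\le(u\rs x)\vee_1 y$ and hence $(u\rs x)\vee_1 y\ra x\le x\vee_1 y\ra x$, so $x\vee_1 y\ra x\in E$. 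Your symmetry reduction to $(cds_1)$ is fine, but the bridge you describe cannot be built, and without this substitute idea the proof does not go through.
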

\begin{proof}
Consider $x, y\in A$ such that $u=y\ra x\in E$. It follows that \\
$\hspace*{2cm}$ $y\rightarrow (u\rs x)=y\ra ((y\ra x)\rs x)=1\in D$. \\
Since D is fantastic, we have $(u\rs x)\vee_1 y\ra (u\rs x)\in D$. \\
From $D\subseteq E$ we get $(u\rs x)\vee_1 y\ra (u\rs x)\in E$. \\
Applying $(psBE_4)$, it follows that $u\rs ((u\rs x)\vee_1 y\ra x)\in E$. \\
Since $u\in E$, we get $(u\rs x)\vee_1 y\ra x\in E$. \\
From $x\le u\rs x$, by condition (A), we have $(u\rs x)\ra y\le x\ra y$, and \\
$(x\ra y)\rs y\le ((u\rs x)\ra y\le x)\rs y$, that is $x\vee_1 y\le (u\rs x)\vee_1 y$. \\
Finally, applying again condition (A), $(u\rs x)\vee_1 y\ra x\le x\vee_1 y\ra x$. 
Hence $x\vee_1 y\ra x\in E$. \\
Similarly from $y\rs x\in E$, we get $x\vee_2 y\rs x\in E$. \\
We conclude that $E\in {\mathcal DS}_f(A)$.
\end{proof}

\begin{cor} \label{comm-ds-120} Let $A$ be a pseudo-BE(A) algebra. Then $\{1\}\in {\mathcal DS}_f(A)$
if and only if ${\mathcal DS}(A)={\mathcal DS}_f(A)$.
\end{cor}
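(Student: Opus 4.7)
The plan is to derive this statement directly from Proposition \ref{comm-ds-110}, exploiting the fact that $\{1\}$ is the smallest deductive system of any pseudo-BE algebra. First, I would note that $\{1\}\in{\mathcal DS}(A)$: axiom $(ds_1)$ is trivial, and $(ds_2)$ follows because whenever $x\in\{1\}$ and $x\rightarrow y\in\{1\}$ one has $x=1$ and $1\rightarrow y=y=1$ by $(psBE_3)$, so $y\in\{1\}$. Consequently every $E\in{\mathcal DS}(A)$ contains $1$ and hence satisfies $\{1\}\subseteq E$.

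For the forward direction, suppose $\{1\}\in{\mathcal DS}_f(A)$ and let $E\in{\mathcal DS}(A)$ be arbitrary. Applying Proposition \ref{comm-ds-110} with $D=\{1\}$ and this $E$ yields $E\in{\mathcal DS}_f(A)$. Thus ${\mathcal DS}(A)\subseteq{\mathcal DS}_f(A)$, and the reverse inclusion is immediate from Definition \ref{comm-ds-70}, giving ${\mathcal DS}(A)={\mathcal DS}_f(A)$.

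For the converse, assume ${\mathcal DS}(A)={\mathcal DS}_f(A)$. Since $\{1\}\in{\mathcal DS}(A)$ by the observation above, the assumed equality gives $\{1\}\in{\mathcal DS}_f(A)$ at once.

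There is no genuine obstacle: the corollary is essentially a one-line consequence of Proposition \ref{comm-ds-110}, the only content being the routine check that $\{1\}$ is always a deductive system of a pseudo-BE algebra, which ensures it is comparable (below) with every element of ${\mathcal DS}(A)$ so that Proposition \ref{comm-ds-110} applies uniformly.
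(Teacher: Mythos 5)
Your argument is correct and is exactly the intended one: the paper states this as an unproved corollary immediately after Proposition \ref{comm-ds-110}, relying on the same observation that $\{1\}$ is a deductive system contained in every member of ${\mathcal DS}(A)$. Nothing is missing.
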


\begin{theo} \label{comm-ds-120-10} If $A$ is a commutative pseudo-BE algebra, then 
${\mathcal DS}(A)={\mathcal DS}_f(A)$.
\end{theo}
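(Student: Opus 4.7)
The plan is to show that every deductive system of a commutative pseudo-BE algebra automatically satisfies the fantastic conditions $(cds_1)$ and $(cds_2)$, because in the commutative setting those conditions become vacuous. The key observation is that $x\vee_1 y\rightarrow x$ simplifies to $y\rightarrow x$, and $x\vee_2 y\rightsquigarrow x$ simplifies to $y\rightsquigarrow x$, so membership in $D$ of the latter already forces membership of the former.

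To make this precise I would invoke Proposition \ref{comm-psBE-100}$(1)$, which asserts that in a commutative pseudo-BE algebra $x\rightarrow y = y\vee_1 x\rightarrow y$ and $x\rightsquigarrow y = y\vee_2 x\rightsquigarrow y$ for all $x,y\in A$. Interchanging the names of $x$ and $y$ yields the identities $y\rightarrow x = x\vee_1 y\rightarrow x$ and $y\rightsquigarrow x = x\vee_2 y\rightsquigarrow x$. Now, given any $D\in {\mathcal DS}(A)$ and any $x,y\in A$ with $y\rightarrow x\in D$, we have $x\vee_1 y\rightarrow x = y\rightarrow x \in D$, so $(cds_1)$ holds; the symmetric argument applied to the $\rightsquigarrow$-identity gives $(cds_2)$. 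Hence ${\mathcal DS}(A)\subseteq {\mathcal DS}_f(A)$, and the reverse inclusion is immediate from Definition \ref{comm-ds-70}, so the two sets coincide.

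There is essentially no technical obstacle here: the real content has already been packaged into Proposition \ref{comm-psBE-100}$(1)$, which in turn rests on Theorems \ref{comm-psBE-20} and \ref{comm-psBE-30} that bridge commutative pseudo-BE algebras with commutative pseudo-BCK algebras. The only minor point to check is that the renaming of variables is admissible, which is obvious since the identities in Proposition \ref{comm-psBE-100}$(1)$ are stated universally in $x$ and $y$. Thus the proof is a one-line consequence of the identities recalled in Section 4.
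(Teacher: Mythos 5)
Your proof is correct, and it reaches the conclusion by a slightly different (and in fact shorter) route than the paper. The paper's argument does not use the identity $y\ra x=x\vee_1 y\ra x$ at all: it starts from the inequality $y\ra x\le ((y\ra x)\rs x)\ra x=(y\vee_1 x)\ra x$, which is Proposition \ref{psBE-40}$(4)$ and holds in \emph{any} pseudo-BE algebra, then uses the upward closure of the deductive system $D$ to conclude $(y\vee_1 x)\ra x\in D$, and only then invokes commutativity to replace $y\vee_1 x$ by $x\vee_1 y$. Your argument instead appeals to the full equality of Proposition \ref{comm-psBE-100}$(1)$ (with $x$ and $y$ interchanged, which is legitimate since the identity is universally quantified), so that $(cds_1)$ and $(cds_2)$ hold for trivial reasons and no closure argument is needed. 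What you buy is brevity; what you pay is that Proposition \ref{comm-psBE-100} is itself imported via Theorem \ref{comm-psBE-20} and an external result on commutative pseudo-BCK algebras (essentially Proposition \ref{psBE-20}$(6)$ combined with commutativity of $\vee_1$), whereas the paper's version is self-contained within the pseudo-BE axioms plus the definition of commutativity. One cosmetic remark: the conditions do not become ``vacuous'' in the commutative case, they become trivially satisfied because the two sides coincide; but the mathematics is sound, and the reverse inclusion ${\mathcal DS}_f(A)\subseteq {\mathcal DS}(A)$ is indeed immediate from Definition \ref{comm-ds-70}.
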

\begin{proof} 
Let $D\in {\mathcal DS}(A)$ and let $x, y\in A$ such that $y\ra x\in D$. 
By Proposition \ref{psBE-40}$(4)$, $y\ra x\le ((y\ra x)\rs x)\ra x$, hence 
$((y\ra x)\rs x)\ra x=y\vee_1 x\ra x\in D$. \\ 
Since $A$ is commutative, we get $x\vee_1 y\ra x\in D$. \\
Similarly $y\rs x\in D$ implies $x\vee_2 y\rs x\in D$, hence $D\in {\mathcal DS}_f(A)$. \\
We conclude that ${\mathcal DS}(A)\subseteq {\mathcal DS}_f(A)$, that is ${\mathcal DS}(A)={\mathcal DS}_f(A)$.
\end{proof}

\begin{rem} \label{comm-ds-130} Let $(A,\rightarrow,\rightsquigarrow,1)$ be a pseudo-BCK algebra. 
According to \cite[Th. 4.7, Cor. 4.8]{Ciu7}, the following are equivalent: \\ 
$(1)$ $A$ is commutative; \\
$(2)$ $\{1\}\in {\mathcal DS}_f(A);$ \\  
$(3)$ ${\mathcal DS}(A)={\mathcal DS}_f(A)$. \\ 
As we can see in Example \ref{comm-ds-90}$(2)$, this result is not valid in the case of pseudo-BE algebras. 
Indeed, $A$ satisfies $(2)$ and $(3)$, but it is not commutative, since $a\vee_1 d=d\neq a=d\vee_1 a$. \\
However, $(A,\vee,\rightarrow,\rightsquigarrow,1)$ is a join-semilattice with 
$x\vee y=x\vee_1 y=x\vee_2 y$, for all $x, y\in A$.
\end{rem}

\begin{prop} \label{comm-ds-130-10} Let $f: A\longrightarrow B$ be a pseudo-BE homomorphism. 
If $D\in {\mathcal DS}_f(B)$ then $f^{-1}(D)\in {\mathcal DS}_f(A)$. 
\end{prop}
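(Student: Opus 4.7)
The plan is to reduce the fantasticness of $f^{-1}(D)$ in $A$ to the fantasticness of $D$ in $B$ by exploiting the fact that a pseudo-BE homomorphism preserves the derived operations $\vee_1$ and $\vee_2$.

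First I would observe that, since every fantastic deductive system is in particular a deductive system, $D\in {\mathcal DS}(B)$, and then by Proposition \ref{psBE-90-10}$(1)$ the preimage $f^{-1}(D)$ is a deductive system of $A$. This disposes of axioms $(ds_1)$ and $(ds_2)$ and allows me to focus on verifying the two fantastic conditions $(cds_1)$ and $(cds_2)$.

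Next I would record the key compatibility: because $f(u\ra v)=f(u)\ra f(v)$ and $f(u\rs v)=f(u)\rs f(v)$ for all $u,v\in A$, a direct substitution gives
\[
f(x\vee_1 y)=f((x\ra y)\rs y)=(f(x)\ra f(y))\rs f(y)=f(x)\vee_1 f(y),
\]
and analogously $f(x\vee_2 y)=f(x)\vee_2 f(y)$. Consequently $f(x\vee_1 y\ra x)=(f(x)\vee_1 f(y))\ra f(x)$ and $f(x\vee_2 y\rs x)=(f(x)\vee_2 f(y))\rs f(x)$.

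With this in hand I would verify $(cds_1)$ as follows: assume $y\ra x\in f^{-1}(D)$, so $f(y)\ra f(x)=f(y\ra x)\in D$. Since $D\in {\mathcal DS}_f(B)$, the element $f(x)\vee_1 f(y)\ra f(x)$ lies in $D$. By the displayed identity above, this is precisely $f(x\vee_1 y\ra x)$, so $x\vee_1 y\ra x\in f^{-1}(D)$. A verbatim argument with $\vee_2$ and $\rs$ yields $(cds_2)$, and therefore $f^{-1}(D)\in {\mathcal DS}_f(A)$. There is essentially no obstacle here; the only thing to be careful about is verifying the preservation identities for $\vee_1$ and $\vee_2$, which is a one-line consequence of the definition of a pseudo-BE homomorphism.
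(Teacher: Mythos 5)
Your proof is correct and follows essentially the same route as the paper's: pull the hypothesis through $f$, apply fantasticness of $D$ in $B$, and transport back using the fact that a pseudo-BE homomorphism preserves $\vee_1$ and $\vee_2$. You are merely more explicit than the paper about the preservation identity $f(x\vee_1 y)=f(x)\vee_1 f(y)$ and about $f^{-1}(D)$ being a deductive system, both of which the paper leaves implicit.
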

\begin{proof}
Consider $D\in {\mathcal DS}_f(B)$ and let $x, y\in A$ such that $y\ra x\in f^{-1}(D)$, that is 
$f(y\ra x)\in D$, so $f(y)\ra f(x)\in D$. Since $D\in {\mathcal DS}_f(B)$, we have  
$f(x)\vee_1 f(y)\ra f(x)\in D$. \\
It follows that $f(x\vee_1 y\ra x)\in D$, hence $x\vee_1 y\ra x\in f^{-1}(D)$. \\
Similarly $y\rs x\in f^{-1}(D)$ implies $x\vee_2 y\rs x\in f^{-1}(D)$, thus $f^{-1}(D)\in {\mathcal DS}_f(A)$. 
\end{proof}

\begin{prop} \label{comm-ds-140} If $s\in \mathcal{BS}(A)$, then $\Ker(s)\in {\mathcal DS}_f(A)$. 
\end{prop}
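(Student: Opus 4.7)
The plan is to verify the two axioms $(cds_1)$ and $(cds_2)$ directly from the definition of a fantastic deductive system, using the analytical characterizations of Bosbach states already recalled in Section 3. Recall that it is already known that $\Ker(s)\in{\mathcal DS}(A)$, so in particular $1\in \Ker(s)$ and the underlying deductive-system condition holds; only the fantastic-type implications remain to be checked.

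For $(cds_1)$, I would start by assuming $y\ra x\in \Ker(s)$, i.e., $s(y\ra x)=1$. Applying the identity $s(y\ra x)=1+s(x)-s(y\vee_1 x)$ from Proposition \ref{s-psBE-20-10}(c), this gives $s(y\vee_1 x)=s(x)$. Next I would invoke Proposition \ref{s-psBE-20}(2), which tells us $s(y\vee_1 x)=s(x\vee_1 y)$; so in fact $s(x\vee_1 y)=s(x)$. Since $x\le x\vee_1 y$ by Lemma \ref{psBE-100}(4), Proposition \ref{s-psBE-20}(1) yields $s((x\vee_1 y)\ra x)=1+s(x)-s(x\vee_1 y)=1$, showing $x\vee_1 y\ra x\in \Ker(s)$.

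The argument for $(cds_2)$ is entirely parallel, swapping $\vee_1$ for $\vee_2$ and $\ra$ for $\rs$: from $s(y\rs x)=1$ one obtains $s(y\vee_2 x)=s(x)$ via the second half of Proposition \ref{s-psBE-20-10}(c), then commutes $\vee_2$ using Proposition \ref{s-psBE-20}(2), and finishes by applying Proposition \ref{s-psBE-20}(1) to the pair $x\le x\vee_2 y$ to conclude that $s((x\vee_2 y)\rs x)=1$.

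There is no serious obstacle here: the whole proof is a two-line computation once one notices that the crucial symmetry ingredient is $s(x\vee_1 y)=s(y\vee_1 x)$ (and its $\vee_2$ counterpart), which is precisely Proposition \ref{s-psBE-20}(2). The only point to be a little careful about is to use the equality $s(y\vee_1 x)=s(x)$ coming from the hypothesis in the correct direction, so that it can be substituted into the formula for $s((x\vee_1 y)\ra x)$ after applying the symmetry.
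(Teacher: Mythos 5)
Your proof is correct and follows essentially the same route as the paper: both arguments reduce to showing $s(x\vee_1 y)=s(x)$ from the hypothesis $s(y\ra x)=1$ and then apply the state identity to the pair $x\le x\vee_1 y$ to get $s(x\vee_1 y\ra x)=1$ (and dually for $\vee_2$ and $\rs$). The only difference is cosmetic: you obtain $s(x\vee_1 y)=s(x)$ by citing Propositions \ref{s-psBE-20-10}(c) and \ref{s-psBE-20}(2), whereas the paper re-derives the same equality directly from the axioms $(bs_2)$--$(bs_3)$.
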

\begin{proof}
Let $x, y\in A$ such that $y\ra x \in \Ker(s)$, that is $s(y\ra x)=1$. \\ 
It follows that $s(x\ra y)=s(y)+s(y\ra x)-s(x)=s(y)+1-s(x)$. \\
Moreover, since $y\le x\ra y$ and $x\le x\vee_1 y$, we have: \\
$\hspace*{2cm}$ $s(y\rs (x\ra y))=s(1)=1$ and $s(x\ra x\vee_1 y)=s(1)=1$. \\
Then we get: \\
$\hspace*{2cm}$ $s(x\ra y)+s((x\ra y)\rs y)=s(y)+s(y\rs (x\ra y))=s(y)+1$ \\
$\hspace*{2cm}$ $s(x\vee_1 y)=s((x\ra y)\rs y)=1+s(y)-s(x\ra y)=s(x)$. \\
$\hspace*{2cm}$ $s(x\vee_1 y\ra x)=s(x)+s(x\ra x\vee_1 y))-s(x\vee_1 y)=1$. \\
It follows that $x\vee_1 y\ra x\in \Ker(s)$. \\ 
Similarly from $y\rs x \in \Ker(s)$ we get $x\vee_1 y\rs x\in \Ker(s)$. \\
We conclude that $\Ker(s)\in {\mathcal DS}_f(A)$. 
\end{proof}

\begin{ex} \label{comm-ds-150} Consider the Bosbach states from Example \ref{s-psBE-50}. Then we have: \\
$\hspace*{2cm}$ $\Ker(s^1_{\alpha})=\{1, a, d\}$, $\Ker(s^2_{\alpha})=\{1, b, c\}$, 
                $\Ker(s^3_{\alpha, \beta})=\{1\}$, $\Ker(s^4)=A$. \\ 
We can see that 
    $\{\Ker(s^1_{\alpha}), \Ker(s^2_{\alpha}), \Ker(s^3_{\alpha, \beta}), \Ker(s^4)\}={\mathcal DS}_f(A)$.  
\end{ex}

\begin{prop} \label{comm-ds-160} If $s\in \mathcal{SM}(A)$, then $\Ker(s)\in {\mathcal DS}_f(A)$. 
\end{prop}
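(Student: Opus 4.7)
The plan is simply to observe that this is a direct corollary of the two preceding results and requires no new work. The strategy is to factor the argument through the Bosbach-state case.

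First, I would invoke Proposition \ref{sm-psBE-20}, which asserts the inclusion $\mathcal{SM}(A) \subseteq \mathcal{BS}(A)$. Thus any state-morphism $s$ on $A$ is in particular a Bosbach state on $A$, so $\Ker(s)$ can be analyzed as the kernel of an element of $\mathcal{BS}(A)$. Note that the definition of $\Ker(s) = \{x \in A \mid s(x) = 1\}$ coincides in both cases, so there is no ambiguity when we reinterpret $s$ as a Bosbach state.

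Next, I would apply Proposition \ref{comm-ds-140}, which has just been established and says precisely that $\Ker(s) \in {\mathcal DS}_f(A)$ whenever $s \in \mathcal{BS}(A)$. Chaining these two facts yields the conclusion immediately.

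There is essentially no obstacle here; the statement is a one-line corollary of the previous proposition via the inclusion of state-morphisms in Bosbach states. One could alternatively give a direct proof using the identity $s(x \ra y) = s(x) \ra_{\mbox{\tiny \L}} s(y)$: if $s(y \ra x) = 1$, then $\min\{1 - s(y) + s(x), 1\} = 1$, so $s(y) \le s(x)$, and then using Proposition \ref{sm-psBE-30} one checks $s(x \vee_1 y) = \max\{s(x), s(y)\} = s(x)$, whence $s(x \vee_1 y \ra x) = \min\{1 - s(x \vee_1 y) + s(x), 1\} = 1$; but routing through Proposition \ref{comm-ds-140} is cleaner and is what I would write.
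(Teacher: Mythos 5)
Your proof is correct and follows exactly the paper's own argument: apply Proposition \ref{sm-psBE-20} to see that $s\in\mathcal{BS}(A)$ and then Proposition \ref{comm-ds-140} to conclude $\Ker(s)\in{\mathcal DS}_f(A)$. Nothing further is needed.
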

\begin{proof} 
Let $s\in \mathcal{SM}(A)$. 
By Proposition \ref{sm-psBE-20}, $s\in \mathcal{BS}(A)$ and applying Proposition \ref{comm-ds-140}, it 
follows that $\Ker(s)\in {\mathcal DS}_f(A)$. 
\end{proof}

\begin{ex} \label{comm-ds-170} Consider the state-morphisms from Example \ref{sm-psBE-40}. \\
Obviously $\mathcal{SM}(A)\subseteq \mathcal{BS}(A)$ and 
$\{\Ker(s^1_{\alpha}), \Ker(s^2_{\alpha}), \Ker(s^4)\}\subseteq {\mathcal DS}_f(A)$.       
\end{ex}

\begin{prop} \label{comm-ds-180} If $m\in \mathcal{M}(A)$, then $\Ker_0(m)\in {\mathcal DS}_f(A)$. 
\end{prop}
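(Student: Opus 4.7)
The plan is to show that $\Ker_0(m)$ satisfies the fantasticity conditions $(cds_1)$ and $(cds_2)$, starting from the fact that $\Ker_0(m) \in {\mathcal DS}_n(A)$ is already granted by Proposition \ref{m-psBE-30}(2). The strategy mirrors the proof of Proposition \ref{comm-ds-140} for Bosbach states, but is rephrased using the measure identities of Propositions \ref{m-psBE-20} and \ref{m-psBE-20-10}.

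To verify $(cds_1)$, I would assume $y \to x \in \Ker_0(m)$, i.e.\ $m(y \to x) = 0$. Since $y \vee_1 x$ is an upper bound of $\{x,y\}$ by Lemma \ref{psBE-100}(4), one has $x \le y \vee_1 x$, and the measure definition yields $m(y \vee_1 x \to x) = m(x) - m(y \vee_1 x)$. On the other hand, applying Proposition \ref{m-psBE-20-10} after interchanging $x$ and $y$ gives $m(y \vee_1 x \to x) = m(y \to x) = 0$, so $m(x) = m(y \vee_1 x)$. Invoking the swap-invariance $m(y \vee_1 x) = m(x \vee_1 y)$ from Proposition \ref{m-psBE-20}(3), we conclude $m(x) = m(x \vee_1 y)$. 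A final application of the measure definition to $x \le x \vee_1 y$ yields $m(x \vee_1 y \to x) = m(x) - m(x \vee_1 y) = 0$, so $x \vee_1 y \to x \in \Ker_0(m)$.

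The verification of $(cds_2)$ proceeds by the same template, using $\rs$ in place of $\to$ and $\vee_2$ in place of $\vee_1$. One needs the $\vee_2$-counterpart of Proposition \ref{m-psBE-20-10}, namely $m(y \vee_2 x \rs x) = m(y \rs x)$, which follows by exactly the same antitonicity argument: one direction comes from condition (A) applied to $x \le y \vee_2 x$, and the reverse direction uses the inequality $a \le (a \to x) \rs x$ implicit in Proposition \ref{psBE-40}(4) with $a = y \rs x$.

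The main obstacle lies in navigating the absence of commutativity: in a general pseudo-BE(A) algebra $x \vee_1 y$ and $y \vee_1 x$ need not coincide as elements, so the passage from the hypothesis $m(y \to x) = 0$ to the desired conclusion about $x \vee_1 y \to x$ is only possible by routing through the \emph{measure-level} swap invariance from Proposition \ref{m-psBE-20}(3). Without this symmetry, the swapped form of Proposition \ref{m-psBE-20-10} would leave us with information about $y \vee_1 x$ but no algebraic handle on $x \vee_1 y$ itself.
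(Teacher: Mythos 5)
Your argument is essentially the paper's: both proofs hinge on the swap-invariance $m(x\vee_1 y)=m(y\vee_1 x)$ from Proposition \ref{m-psBE-20}(3) and on expressing $m(y\vee_1 x)$ in terms of $m(y\ra x)$; you do the latter via Proposition \ref{m-psBE-20-10} with the roles of $x$ and $y$ interchanged, while the paper does it by writing $y\vee_1 x=(y\ra x)\rs x$ and applying the measure identity to $x\le y\ra x$, which amounts to the same computation. The one caveat is that Proposition \ref{m-psBE-20-10} (and your explicit appeal to condition (A) in the $\vee_2$ case) carries the pseudo-BE(A) hypothesis, whereas Proposition \ref{comm-ds-180} is stated, and proved in the paper, for an arbitrary pseudo-BE algebra. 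This is harmless and easily repaired: since $m\ge 0$ and $m(y\ra x)=0$, you only need the one-sided estimate $m(y\vee_1 x\ra x)\le m(y\ra x)$, which follows from $y\ra x\le (y\vee_1 x)\ra x$ (Proposition \ref{psBE-40}(4)) together with the order-reversal of $m$ and uses no antitonicity in the first argument; the paper's direct computation sidesteps the issue entirely.
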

\begin{proof} 
Consider $x, y\in A$ such that $y\ra x\in \Ker_0(m)$, that is $m(y\ra x)=0$. \\
According to Proposition \ref{m-psBE-20}, $m(x\vee_1 y)=m(y\vee_1 x)$.  \\
Since $x\le x\vee_1 y$ and $x\le y\ra x$, we have: \\
$\hspace*{2cm}$ $m(x\vee_1 y\ra x)=m(x)-m(x\vee_1 y)=m(x)-m(y\vee_1 x)$ \\
$\hspace*{4.6cm}$ $=m(x)-m((y\ra x)\rs x)$ \\
$\hspace*{4.6cm}$ $=m(x)-m(x)+m(y\ra x)=0$, \\ 
hence $x\vee_1 y\ra x\in \Ker_0(m)$. \\
Similarly $y\rs x\in \Ker_0(m)$ implies $x\vee_2 y\rs x\in \Ker_0(m)$. \\
It follows that $\Ker_0(m)\in {\mathcal DS}_f(A)$.
\end{proof}

\begin{ex} \label{comm-ds-190} Consider the measures from Example \ref{m-psBE-50}. Then we have: \\
$\hspace*{2cm}$ $\Ker_0(m^1_{\alpha})=\{1, a, d\}$, $\Ker_0(m^2_{\alpha})=\{1, b, c\}$, 
                $\Ker_0(m^3_{\alpha, \beta})=\{1\}$, $\Ker_0(m^4)=A$. \\ 
We can see that 
    $\{\Ker_0(m^1_{\alpha}), \Ker_0(m^2_{\alpha}), \Ker_0(m^3_{\alpha, \beta}), \Ker_0(m^4)\}={\mathcal DS}_f(A)$.   
\end{ex}

\begin{prop} \label{comm-ds-200} Let $(A, \ra, \rs, \mu, 1)$ be a type II pseudo-BE(A) algebra. \\
Then $\Ker(\mu)\in {\mathcal DS}_f(A)$.   
\end{prop}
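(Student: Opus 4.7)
The plan is to verify the two fantastic conditions $(cds_1)$ and $(cds_2)$ directly from the definition, exploiting the fact that $\Ker(\mu)$ is already known to be a deductive system by Proposition \ref{is-psBE-20}(4), and that the type II axiom $(is'_2)$ provides a very convenient identity relating $\mu(y \ra x)$ with $\mu(x \vee_1 y) \ra \mu(x)$.

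So suppose $y \ra x \in \Ker(\mu)$, i.e.\ $\mu(y \ra x) = 1$. The key observation is that axiom $(is'_2)$ (with the roles of the two variables swapped appropriately) gives the identity
\[
\mu(y \ra x) = \mu(x \vee_1 y) \ra \mu(x).
\]
On the other hand, by Lemma \ref{psBE-100}(4) we have $x \le x \vee_1 y$, so Proposition \ref{is-psBE-30}(1), which applies precisely when the second variable sits below the first, yields
\[
\mu(x \vee_1 y \ra x) = \mu(x \vee_1 y) \ra \mu(x).
\]
Combining the two displays gives $\mu(x \vee_1 y \ra x) = \mu(y \ra x) = 1$, so $x \vee_1 y \ra x \in \Ker(\mu)$, which is $(cds_1)$.

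The condition $(cds_2)$ is handled in a completely symmetric way: starting from $y \rs x \in \Ker(\mu)$, use the second half of $(is'_2)$ together with the second half of Proposition \ref{is-psBE-30}(1), again noting that $x \le x \vee_2 y$ by Lemma \ref{psBE-100}(4), to obtain $\mu(x \vee_2 y \rs x) = \mu(y \rs x) = 1$.

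There is essentially no obstacle beyond lining up the identities correctly; the only subtlety is to notice that the type II axiom is tailored exactly to turn $\mu(y \ra x)$ into the same expression $\mu(x \vee_1 y) \ra \mu(x)$ that Proposition \ref{is-psBE-30}(1) produces from $\mu(x \vee_1 y \ra x)$, which makes the membership $x \vee_1 y \ra x \in \Ker(\mu)$ fall out immediately. The (A) condition enters only implicitly, through the prerequisites of Proposition \ref{is-psBE-30}(1).
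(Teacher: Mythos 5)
Your proof is correct and is essentially the paper's argument: the paper proves this proposition by simply citing Proposition \ref{is-psBE-30}(2),(3), whose forward implications are exactly the conditions $(cds_1)$ and $(cds_2)$ for $\Ker(\mu)$, and the paper's own proof of that cited result is precisely the combination of axiom $(is^{'}_2)$ with Proposition \ref{is-psBE-30}(1) and Lemma \ref{psBE-100}(4) that you spell out. You have merely inlined the cited lemma, so nothing further is needed.
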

\begin{proof} It follows by Proposition \ref{is-psBE-30}$(2),(3)$. 
\end{proof}

\begin{ex} \label{comm-psBE-210} Consider the internal states from Example \ref{is-psBE-50}. Then we have: \\
$\hspace*{2cm}$ $\Ker(\mu_1)=\Ker(\mu_2)=\Ker(\mu_3)=\Ker(\mu_4)=\Ker(\mu_5)=\{1\}$, \\ 
$\hspace*{2cm}$ $\Ker(\mu_6)=\Ker(\mu_7)=\{1,a,d\}$, \\                 
$\hspace*{2cm}$ $\Ker(\mu_8)=\Ker(\mu_9)=\{1,b,c\}$, \\ 
$\hspace*{2cm}$ $\Ker(\mu_{10})=A$. \\ 
We can see that $\{\Ker(\mu_i) \mid i=1,2,\cdots,10\}={\mathcal DS}_f(A)$.  
\end{ex}

In what follows by $A$ we will denote a bounded pseudo-BE algebra.   

\begin{Def} \label{comm-ds-220} Let $D\in {\mathcal DS}(A)$. Then $D$ is called an \emph{involutive deductive system} 
of $A$ if $x^{-\sim}\ra x, x^{\sim-}\rs x \in D$, for all $x\in A$.  
\end{Def}

We will denote by ${\mathcal DS}_i(A)$ the set of all involutive deductive systems of $A$. 

\begin{rems} \label{comm-ds-230}
$(1)$ $\Den(A)\subseteq D$, for any $D\in {\mathcal DS}_i(A)$. \\
$(2)$ if $D\in {\mathcal DS}_i(A)$, then $x\in D$ iff $x^{-\sim}\in D$ iff $x^{\sim-}\in D$. \\
$(3)$ if $A$ is involutive then ${\mathcal DS}_i(A)={\mathcal DS}(A)$. 
\end{rems}

\begin{prop} \label{comm-ds-240} If $s\in \mathcal{BS}_1(A)$, then $\Ker(s)\in {\mathcal DS}_i(A)$. 
\end{prop}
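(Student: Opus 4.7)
The plan is to unpack the definition of an involutive deductive system and verify the two membership conditions $x^{-\sim}\to x \in \Ker(s)$ and $x^{\sim-}\rs x \in \Ker(s)$ directly, using the evaluation formulas for a Bosbach state with $s(0)=0$ that have already been established in the paper.

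First I would note that $\Ker(s)\in \mathcal{DS}(A)$ is already known (cited from \cite[Prop. 3.8]{Rez6}), so the only thing to check is the involutive condition. Next I would observe that $x^{-\sim}=x\vee_1 0$ and $x^{\sim-}=x\vee_2 0$ (this is recorded explicitly in the preliminaries), and by Lemma~\ref{psBE-100}$(4)$ the elements $x\vee_1 0$ and $x\vee_2 0$ are upper bounds of $\{x,0\}$; in particular $x\le x^{-\sim}$ and $x\le x^{\sim-}$.

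Now I would apply Proposition~\ref{s-psBE-20}$(1)$: whenever $u\le v$ in $A$, we have $s(v\to u)=1+s(u)-s(v)=s(v\rs u)$. Taking $u=x$ and $v=x^{-\sim}$ (respectively $v=x^{\sim-}$), this gives
\[
s(x^{-\sim}\to x)=1+s(x)-s(x^{-\sim}),\qquad s(x^{\sim-}\rs x)=1+s(x)-s(x^{\sim-}).
\]
By Proposition~\ref{s-psBE-30-10}$(2)$, since $s\in \mathcal{BS}_1(A)$, we have $s(x^{-\sim})=s(x^{\sim-})=s(x)$. Plugging this in yields $s(x^{-\sim}\to x)=1$ and $s(x^{\sim-}\rs x)=1$, so both $x^{-\sim}\to x$ and $x^{\sim-}\rs x$ belong to $\Ker(s)$. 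Hence $\Ker(s)\in {\mathcal DS}_i(A)$.

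There is no real obstacle here; the main point is just to chain together the two previously proved facts (monotonicity formula for $s$ and the equality $s(x^{-\sim})=s(x)=s(x^{\sim-})$) after recognizing the inequalities $x\le x^{-\sim}$ and $x\le x^{\sim-}$ that come from the upper-bound property of $\vee_1$ and $\vee_2$.
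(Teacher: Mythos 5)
Your proof is correct and follows essentially the same route as the paper's: both establish $x\le x^{-\sim}$ and $x\le x^{\sim-}$ (you via the upper-bound property of $\vee_1,\vee_2$, the paper by taking $y:=0$ in Proposition~\ref{psBE-40}(4), which amounts to the same thing), then combine the Bosbach-state formula $s(v\to u)=1+s(u)-s(v)$ with $s(x^{-\sim})=s(x^{\sim-})=s(x)$ from Proposition~\ref{s-psBE-30-10} to conclude. No gaps.
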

\begin{proof}
Consider $s\in \mathcal{BS}_1(A)$ and let $x\in A$. 
Taking $y:=0$ in Proposition \ref{psBE-40}$(4)$ we get $x\le x^{-\sim}$ and $x\le x^{\sim-}$, that is 
$x\ra x^{-\sim}=1$ and $x\rs x^{\sim-}=1$. \\ 
From $(bs_1)-(bs_3)$, applying Proposition \ref{s-psBE-30-10} we get $s(x^{-\sim}\ra x)=1$ and $s(x^{\sim-}\rs x)=1$. \\
Hence $x^{-\sim}\ra x, x^{\sim-}\rs x \in \Ker(s)$, that is $\Ker(s)\in {\mathcal DS}_i(A)$. 
\end{proof}

\begin{prop} \label{comm-ds-250} ${\mathcal DS}_f(A)\subseteq {\mathcal DS}_i(A)$.  
\end{prop}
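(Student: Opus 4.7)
The plan is to invoke the definition of fantastic deductive system with the specific choice $y := 0$, exploiting the fact that in a bounded pseudo-BE algebra the element $0$ is below every other element.

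More precisely, let $D \in {\mathcal DS}_f(A)$ and fix an arbitrary $x \in A$. I would first recall from the preliminaries that $x^{-\sim} = x \vee_1 0$ and $x^{\sim-} = x \vee_2 0$. Since $A$ is bounded, $0 \le x$, so $(psBE_3)$ and the definition of $\le$ give $0 \ra x = 1$ and $0 \rs x = 1$. Because $1 \in D$ (as $D$ is a deductive system), this means $0 \ra x \in D$ and $0 \rs x \in D$.

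Now I would apply $(cds_1)$ with the pair $(x, 0)$: from $0 \ra x \in D$ we conclude $x \vee_1 0 \ra x \in D$, which is exactly $x^{-\sim} \ra x \in D$. Applying $(cds_2)$ in the analogous way, from $0 \rs x \in D$ we get $x \vee_2 0 \rs x = x^{\sim-} \rs x \in D$. Since $x$ was arbitrary, this verifies the defining property of ${\mathcal DS}_i(A)$, hence $D \in {\mathcal DS}_i(A)$.

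There is essentially no obstacle here; the proof is just a direct specialization of the fantastic axioms with $y := 0$, combined with the identification of the double negations as join expressions with $0$. The only thing to be careful about is citing the correct formulas $x^{-\sim} = x \vee_1 0$ and $x^{\sim-} = x \vee_2 0$ from the preliminaries, so that the application of $(cds_1)$ and $(cds_2)$ immediately yields the required membership conditions.
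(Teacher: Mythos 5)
Your proof is correct and follows exactly the paper's own argument: specialize the fantastic axioms $(cds_1)$ and $(cds_2)$ to $y:=0$, use $0\ra x=0\rs x=1\in D$, and identify $x\vee_1 0=x^{-\sim}$ and $x\vee_2 0=x^{\sim-}$. Nothing further is needed.
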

\begin{proof}
Let $D\in {\mathcal DS}_f(A)$ and $x\in A$. \\
Since $0\ra x=1\in D$, we have $x\vee_1 0\ra x\in D$, so $x^{-\sim}\ra x \in D$. \\
Similarly from $0\rs x=1\in D$, we get $x\vee_2 0\rs x\in D$, that is $x^{\sim-}\rs x \in D$. \\
Hence $D\in {\mathcal DS}_i(A)$ and we conclude that ${\mathcal DS}_f(A)\subseteq {\mathcal DS}_i(A)$. 
\end{proof}


$\vspace*{5mm}$

\section{Valuations on pseudo-BE algebras} 

In this section the notions of pseudo-valuation and commutative pseudo-valuation on pseudo-BE algebras are defined and investigated. Given a pseudo-BE algebra $A$, it is proved that the kernel of a commutative pseudo-valuation on $A$ is a fantastic deductive system of $A$. If moreover $A$ is commutative, then we prove 
that any pseudo-valuation on $A$ is commutative. 
Characterizations of pseudo-valuations and commutative pseudo-valuations are given.
We study the relationships between the pseudo-valuations of homomorphic and isomorphic pseudo-BE algebras and 
the relationships between their kernels. \\
In what follows by $A$ we will denote a pseudo-BE algebra, unless otherwise is stated.   

\begin{Def} \label{va-psbe-10} A real-valued function $\varphi:A\longrightarrow {\mathbb R}$ is called a 
\emph{pseudo-valuation} on $A$ if it satisfies the following conditions: \\
$(pv_1)$ $\varphi(1)=0;$ \\
$(pv_2)$ $\varphi(y)-\varphi(x)\le \min\{\varphi(x\ra y), \varphi(x\rs y)\}$  
for all $x, y\in A$. \\ 
A pseudo-valuation $\varphi$ is said to be a \emph{valuation} if it satisfies the condition: \\
$(pv_3)$ $v(x)=0$ implies $x=1$ for all $x\in A$. 
\end{Def}

Denote $\mathcal{PV}(A)$ the set of all pseudo-valuations on $A$. 

\begin{prop} \label{va-psbe-20} If $\varphi\in \mathcal{PV}(A)$, then the following hold for all $x, y, z\in A$: \\
$(1)$ $\varphi(x)\ge \varphi(y)$, whenever $x\le y$ ($\varphi$ is order reversing); \\
$(2)$ $\varphi(x)\ge 0;$ \\
$(3)$ if $z\ra (y\rs x)=1$ or $z\rs (y\ra x)=1$, then $\varphi(x)\le \varphi(y)+ \varphi(z)$. 
\end{prop}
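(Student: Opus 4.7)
The plan is to derive each of the three items directly from the two defining axioms $(pv_1)$ and $(pv_2)$, together with the basic pseudo-BE axioms (particularly the fact that $x\le y$ is equivalent to $x\ra y=1$ iff $x\rs y=1$, together with $(psBE_2)$ giving $x\ra 1=x\rs 1=1$). No deep structural lemma about pseudo-BE algebras is needed; the work is essentially bookkeeping with the inequality in $(pv_2)$.

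For part (1), I would take $x,y\in A$ with $x\le y$. By the definition of $\le$ we get $x\ra y=x\rs y=1$, so $(pv_1)$ yields $\varphi(x\ra y)=\varphi(x\rs y)=0$. Plugging into $(pv_2)$ gives $\varphi(y)-\varphi(x)\le\min\{0,0\}=0$, i.e.\ $\varphi(x)\ge\varphi(y)$. Part (2) is then immediate: every $x\in A$ satisfies $x\le 1$ by $(psBE_2)$, so part (1) gives $\varphi(x)\ge\varphi(1)=0$.

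For part (3), the idea is to reduce both hypotheses to an inequality in $A$ and then combine $(pv_2)$ with part (1). Suppose first that $z\ra(y\rs x)=1$, which says $z\le y\rs x$. By part (1), $\varphi(z)\ge\varphi(y\rs x)$. Now applying $(pv_2)$ to the pair $(y,x)$ gives
\[
\varphi(x)-\varphi(y)\le\min\{\varphi(y\ra x),\varphi(y\rs x)\}\le\varphi(y\rs x)\le\varphi(z),
\]
which rearranges to $\varphi(x)\le\varphi(y)+\varphi(z)$. The case $z\rs(y\ra x)=1$ is entirely symmetric: it gives $z\le y\ra x$, hence $\varphi(z)\ge\varphi(y\ra x)$, and the same chain with $\varphi(y\ra x)$ in place of $\varphi(y\rs x)$ yields the same conclusion.

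There is no real obstacle here; the only thing to be careful about is not to confuse the two implications. The minimum in $(pv_2)$ is convenient precisely because it lets one bound $\varphi(x)-\varphi(y)$ by whichever of $\varphi(y\ra x)$ or $\varphi(y\rs x)$ is supplied by the hypothesis in part (3). I would therefore present (1), (2), (3) in order, noting that (2) is a one-line corollary of (1), and handling the two symmetric cases of (3) in one display.
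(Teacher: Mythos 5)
Your proof is correct and follows essentially the same route as the paper: parts (1) and (2) are identical, and in part (3) you invoke part (1) to get $\varphi(z)\ge\varphi(y\rs x)$ from $z\le y\rs x$, whereas the paper applies $(pv_2)$ directly to the pair $(z,\,y\rs x)$ — the same computation. No issues.
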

\begin{proof}
For $x\le y$ we have $\varphi(y)-\varphi(x)\le \min\{\varphi(x\ra y), \varphi(x\rs y)\}= 
\min \{\varphi(1),\varphi(1)\}=0$, that is $\varphi(x)\ge \varphi(y)$. \\
$(2)$ Since $x\le 1$, by $(1)$ we get $\varphi(x)\ge \varphi(1)=0$. \\
$(3)$ By $(pv_2)$ we have \\ 
$\hspace*{2cm}$ $\varphi(y\rs x)-\varphi(z)\le \varphi(z\ra (y\rs x))=\varphi(1)=0$, \\ 
so $\varphi(y\rs x)\le \varphi(z)$. \\
Applying again $(pv_2)$ we get \\ 
$\hspace*{2cm}$ $\varphi(x)-\varphi(y)\le \varphi(y\rs x)\le \varphi(z)$, \\ 
that is $\varphi(x)\le \varphi(y)+ \varphi(z)$. \\
Similarly for the case $z\rs (y\ra x)=1$. 
\end{proof}

\begin{theo} \label{va-psbe-20-10} Let $\varphi:A\longrightarrow {\mathbb R}$ satisfying $(pv_1)$.  
Then $\varphi\in \mathcal{PV}(A)$ if and only if $\varphi$ satisfies the following conditions: \\
$(pv_4)$ $\varphi(x\ra z)\le \varphi(x\ra (y\rs z)) + \varphi(y)$ \\
$(pv_5)$ $\varphi(x\rs z)\le \varphi(x\rs (y\ra z)) + \varphi(y)$, \\
for all $x, y, z\in A$.
\end{theo}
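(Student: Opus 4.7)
The plan is to prove both directions by direct manipulation, using the pseudo-BE axiom $(psBE_4)$ and Proposition~\ref{psBE-40}(1) to swap the positions of $x$ and $y$ inside the nested implications, together with setting $x:=1$ in the converse direction.

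For the forward direction, assume $\varphi \in \mathcal{PV}(A)$. To establish $(pv_4)$, I would rewrite the right-hand side using $(psBE_4)$: $x \ra (y \rs z) = y \rs (x \ra z)$, so $\varphi(x \ra (y \rs z)) = \varphi(y \rs (x \ra z))$. Setting $u := x \ra z$, the inequality $(pv_4)$ reduces to $\varphi(u) \le \varphi(y \rs u) + \varphi(y)$, which is a direct rearrangement of $(pv_2)$ applied to the pair $(y, u)$ via the second coordinate of the minimum. Symmetrically, for $(pv_5)$ I would use Proposition~\ref{psBE-40}(1) to rewrite $x \rs (y \ra z) = y \ra (x \rs z)$, set $u := x \rs z$, and again invoke $(pv_2)$ (this time through the first coordinate of the minimum).

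For the converse, assume $(pv_1)$, $(pv_4)$, $(pv_5)$ hold. The key observation is that specializing $x := 1$ collapses the nested expressions via $(psBE_3)$. From $(pv_4)$ with $x := 1$ one obtains $\varphi(z) \le \varphi(y \rs z) + \varphi(y)$, and relabeling gives $\varphi(y) - \varphi(x) \le \varphi(x \rs y)$. Similarly $(pv_5)$ with $x := 1$ yields $\varphi(y) - \varphi(x) \le \varphi(x \ra y)$. Taking the minimum of these two bounds recovers $(pv_2)$, and together with the assumed $(pv_1)$ this shows $\varphi \in \mathcal{PV}(A)$.

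No step looks substantially difficult: the essential mechanism on both sides is the swap identity from $(psBE_4)$ and Proposition~\ref{psBE-40}(1). The only minor care needed is to keep track of which of $\ra,\rs$ goes with which coordinate of the minimum in $(pv_2)$, so that the identifications $u = x\ra z$ (for $(pv_4)$) and $u = x\rs z$ (for $(pv_5)$) line up correctly with the left/right implication appearing on the outside.
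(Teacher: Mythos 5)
Your proposal is correct and matches the paper's proof essentially step for step: the forward direction is exactly the application of $(pv_2)$ to the pair $(y, x\ra z)$ (resp. $(y, x\rs z)$) combined with the swap identities $y\rs(x\ra z)=x\ra(y\rs z)$ and $y\ra(x\rs z)=x\rs(y\ra z)$, and the converse is the same specialization $x:=1$ followed by relabeling and taking the minimum. Nothing to add.
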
 
\begin{proof}
Let $\varphi\in \mathcal{PV}(A)$ and $x, y, z\in A$. 
Applying the definition of a pseudo-valuation and $(psBE_4)$ we have: \\
$\hspace*{2cm}$ $\varphi(x\ra z)-\varphi(y)\le \varphi(y\rs (x\ra z))=\varphi(x\ra (y\rs z))$ \\
$\hspace*{2cm}$ $\varphi(x\rs z)-\varphi(y)\le \varphi(y\ra (x\rs z))=\varphi(x\rs (y\ra z))$, \\ 
that is $(pv_4)$ and $(pv_5)$ are satisfied. \\
Conversely, taking $x:=1$,$z:=y$ and $y:=x$ in $(pv_4)$ and $(pv_5)$ we get: \\
$\hspace*{2cm}$ $\varphi(y)\le \varphi(x\rs y)+\varphi(x)$ and $\varphi(y)\le \varphi(x\ra x)+\varphi(x)$, \\ respectively. It follows that: \\  
$\hspace*{2cm}$ $\varphi(y)-\varphi(x)\le \min\{\varphi(x\rs y),\varphi(x\ra y)\}$, \\ 
that is $(pv_2)$. We conclude that $\varphi\in \mathcal{PV}(A)$. 
\end{proof}

\begin{ex} \label{va-psbe-30}(see \cite{Bus3})
Let $D\in \mathcal{DS}(A)$ and the map $\varphi:A\longrightarrow {\mathbb R}$ defined by 
\begin{equation*}
\varphi(x)= \left\{
\begin{array}{cc}
	0,\:\: {\rm if} \: x \in D\\
	a,\:\: {\rm if} \: x \notin D,
\end{array}
\right.
\end{equation*}
where $a\in {\mathbb R}$, $a\ge 0$. Then $\varphi\in \mathcal{PV}(A)$. 
\end{ex}

\begin{ex} \label{va-psbe-30-05} Let $A$ be the pseudo-BE(A) algebra from Example \ref{psBE-70-30}. \\
Define $\varphi_{\alpha,\beta}: A\longrightarrow {\mathbb R}$ by: $\varphi_{\alpha,\beta}(1)=0$, 
$\varphi_{\alpha,\beta}(a)=v_{\alpha,\beta}(d)=\alpha$, $\varphi_{\alpha,\beta}(b)=\varphi_{\alpha,\beta}(c)=\beta$. \\ 
Then $\mathcal{PV}(A)=\{\varphi_{\alpha,\beta} \mid \alpha, \beta\in {\mathbb R}, \alpha, \beta \ge 0\}$. 
\end{ex}

\begin{Def} \label{va-psbe-30-10} A real-valued function $\varphi:A\longrightarrow {\mathbb R}$ is called a 
\emph{weak pseudo-valuation} on $A$ if it satisfies the following condition: \\
$(pv_6)$ $\max\{\varphi(x\ra y), \varphi(x\rs y)\} \le \varphi(x)+\varphi(y)$, \\
for all $x, y\in A$. 
\end{Def}

Denote $\mathcal{PV}^w(A)$ the set of all weak pseudo-valuations on $A$.

\begin{rem} \label{va-psbe-30-20} If $\varphi\in \mathcal{PV}^w(A)$, then $\varphi(x)\ge 0$ for all $x\in A$. \\ 
Indeed, for any $x\in A$ we have $\varphi(1)=\varphi(x\ra 1)\le \varphi(x)+\varphi(1)$, hence $\varphi(x)\ge 0$. 
\end{rem}

\begin{prop} \label{va-psbe-30-30} $\mathcal{PV}(A)\subseteq \mathcal{PV}^w(A)$. 
\end{prop}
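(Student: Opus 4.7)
The plan is to observe that the inequality defining a weak pseudo-valuation follows almost trivially from two properties of pseudo-valuations that have already been recorded in Proposition \ref{va-psbe-20}, namely that every $\varphi \in \mathcal{PV}(A)$ is order-reversing and is nonnegative. So the forward direction of the containment reduces to showing that both $\varphi(x \to y)$ and $\varphi(x \rightsquigarrow y)$ sit below $\varphi(y)$, and then absorbing the nonnegative quantity $\varphi(x)$ into the bound.

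Concretely, I would first invoke Proposition \ref{psBE-40}(3): specialized with the roles of the variables exchanged, this gives $y \to (x \to y) = 1$ and $y \rightsquigarrow (x \rightsquigarrow y) = 1$, i.e.\ $y \le x \to y$ and $y \le x \rightsquigarrow y$ in the pseudo-BE order. Second, since $\varphi$ is order-reversing by Proposition \ref{va-psbe-20}(1), these two comparabilities yield
$$\varphi(x \to y) \le \varphi(y) \qquad \text{and} \qquad \varphi(x \rightsquigarrow y) \le \varphi(y).$$
Third, by Proposition \ref{va-psbe-20}(2) we have $\varphi(x) \ge 0$, so $\varphi(y) \le \varphi(x) + \varphi(y)$. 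Combining the three observations,
$$\max\{\varphi(x \to y), \varphi(x \rightsquigarrow y)\} \le \varphi(y) \le \varphi(x) + \varphi(y),$$
which is exactly condition $(pv_6)$. Hence $\varphi \in \mathcal{PV}^w(A)$, and the containment $\mathcal{PV}(A) \subseteq \mathcal{PV}^w(A)$ follows.

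There is essentially no obstacle: the argument is a one-line combination of already-established facts, and in particular does not use axiom $(pv_2)$ in its full strength (only the weaker monotonicity consequence). It is worth flagging that the containment is in general strict—the weak version only bounds $\varphi(x \to y)$ and $\varphi(x \rightsquigarrow y)$ from above by $\varphi(x) + \varphi(y)$, while a genuine pseudo-valuation additionally bounds them from below by $\varphi(y) - \varphi(x)$, so one would not expect equality of the two classes without further hypotheses on $A$.
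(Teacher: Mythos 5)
Your proof is correct, and it takes a genuinely simpler route than the paper's. The paper establishes the identities $x\ra (y\rs (x\ra y))=1$ and $x\ra (y\ra (x\rs y))=1$ via $(psBE_4)$, $(psBE_1)$, $(psBE_2)$, and then applies $(pv_2)$ twice in succession to each to unwind the bound $\varphi(x\ra y)\le \varphi(x)+\varphi(y)$ (and similarly for $\rs$). You instead observe from Proposition \ref{psBE-40}(3) that $y\le x\ra y$ and $y\le x\rs y$, apply the order-reversal of $\varphi$ from Proposition \ref{va-psbe-20}(1) to get $\varphi(x\ra y)\le\varphi(y)$ and $\varphi(x\rs y)\le\varphi(y)$, and then absorb the nonnegative term $\varphi(x)$ using Proposition \ref{va-psbe-20}(2). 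Both arguments are valid; in fact the paper's own computation passes through the equality $y\rs(x\ra y)=x\ra(y\rs y)=1$, which is exactly your comparability $y\le x\ra y$, so the two proofs rest on the same underlying fact. What your version buys is economy (one appeal to monotonicity instead of two applications of $(pv_2)$) and a strictly sharper intermediate estimate, $\max\{\varphi(x\ra y),\varphi(x\rs y)\}\le\varphi(y)$, of which $(pv_6)$ is a weakening. Your closing remark that the containment is strict in general is confirmed by the paper's Example \ref{va-psbe-30-40}, where $\varphi_{0,1,3,4,2}\in\mathcal{PV}^w(A)\setminus\mathcal{PV}(A)$.
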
 
\begin{proof}
Let $\varphi\in \mathcal{PV}(A)$ and $x, y\in A$. Using $(psBE_4)$, $(psBE_1)$ and $(psBE_2)$ we have: \\
$\hspace*{2cm}$ $x\ra (y\rs (x\ra y))=x\ra (x\ra (y\rs y))=x\ra (x\ra 1)=x\ra 1=1$, \\
$\hspace*{2cm}$ $x\ra (y\ra (x\rs y))=x\ra (x\rs (y\ra y))=x\ra (x\rs 1)=x\ra 1=1$. \\
By $(pv_1)$ and $(pv_2)$ we get: \\
$\hspace*{2cm}$ $0=\varphi(1)=\varphi(x\ra (y\rs (x\ra y)))$ \\ 
$\hspace*{3.5cm}$ $\ge \varphi(y\rs (x\ra y))-\varphi(x)$ \\ 
$\hspace*{3.5cm}$ $\ge \varphi(x\ra y)-\varphi(y)-\varphi(x)$ and \\
$\hspace*{2cm}$ $0=\varphi(1)=\varphi(x\ra (y\ra (x\rs y)))$ \\ 
$\hspace*{3.5cm}$ $\ge \varphi(y\ra (x\rs y))-\varphi(x)$ \\ 
$\hspace*{3.5cm}$ $\ge \varphi(x\rs y)-\varphi(y)-\varphi(x)$. \\
Hence $\varphi(x\ra y)\le \varphi(x)+\varphi(y)$ and $\varphi(x\rs y)\le \varphi(x)+\varphi(y)$. \\
It follows that $\max\{\varphi(x\ra y), \varphi(x\rs y)\} \le \varphi(x)+\varphi(y)$, 
that is $(pv_6)$ is satisfied. \\ 
Thus $\varphi\in \mathcal{PV}^w(A)$, and we conclude that $\mathcal{PV}(A)\subseteq \mathcal{PV}^w(A)$. 
\end{proof}

\begin{ex} \label{va-psbe-30-40} Let $A$ be the pseudo-BE(A) algebra from Example \ref{psBE-70-30}. \\
Define $\varphi_{\alpha_1,\alpha_2,\alpha_3,\alpha_4,\alpha_5}: A\longrightarrow {\mathbb R}$ by: \\
$\hspace*{3cm}$ $\varphi_{\alpha_1,\alpha_2,\alpha_3,\alpha_4,\alpha_5}(1)=\alpha_1$ \\
$\hspace*{3cm}$ $\varphi_{\alpha_1,\alpha_2,\alpha_3,\alpha_4,\alpha_5}(a)=\alpha_2$ \\
$\hspace*{3cm}$ $\varphi_{\alpha_1,\alpha_2,\alpha_3,\alpha_4,\alpha_5}(b)=\alpha_3$ \\
$\hspace*{3cm}$ $\varphi_{\alpha_1,\alpha_2,\alpha_3,\alpha_4,\alpha_5}(c)=\alpha_4$ \\
$\hspace*{3cm}$ $\varphi_{\alpha_1,\alpha_2,\alpha_3,\alpha_4,\alpha_5}(d)=\alpha_5$. \\
Then $\mathcal{PV}^w(A)$ is the set of all maps $\varphi_{\alpha_1,\alpha_2,\alpha_3,\alpha_4,\alpha_5}$  
satisfying the conditions: \\
$\hspace*{3cm}$ $\alpha_i \in {\mathbb R}, \alpha_1\ge 0$, $\alpha_i\ge \frac{1}{2}\alpha_1$, for $i=2,3,4,5;$ \\
$\hspace*{3cm}$ $\alpha_4\le \alpha_3$ or $\alpha_3<\alpha_4\le \min \{\alpha_2+\alpha_3,\alpha_3+\alpha_5\};$ \\
$\hspace*{3cm}$ $\alpha_5\le \min \{\alpha_2+\alpha_3,\alpha_2+\alpha_4\}$.  \\
For example, we can see that $\varphi_{0,1,3,4,2}\in \mathcal{PV}^w(A)$, but 
$\varphi_{0,1,3,4,2}\notin \mathcal{PV}(A)$. 
\end{ex}

For $\varphi\in \mathcal{PV}(A)$, denote $\Ker({\varphi})=\{x\in A\mid \varphi(x)=0\}$, called the 
\emph{kernel} of $\varphi$. 

\begin{rem} \label{va-psbe-30-50} For any $\varphi\in \mathcal{PV}(A)$, $\Ker({\varphi})\in \mathcal{DS}(A)$. \\
Indeed, by $(pv_1)$, $1\in \Ker({\varphi})$. \\
Let $x, y\in A$, such that $x, x\ra y\in \Ker({\varphi})$, that is $\varphi(x)=\varphi(x\ra y)=0$. \\
Using $(pv_2)$, $\varphi(y)-\varphi(x)\le \varphi(x\ra y)$, thus $\varphi(y)=0$. \\ 
It follows that $\Ker({\varphi})\in \mathcal{DS}(A)$.
\end{rem}

\begin{Def} \label{va-psbe-40} A pseudo-valuation $\varphi$ on $A$ is said to be \emph{commutative} if it satisfies the following conditions for all $x, y\in A$: \\
$(cpv_1)$ $\varphi(x\vee_1 y\ra x)\le \varphi(y\ra x);$ \\
$(cpv_2)$ $\varphi(x\vee_2 y\rs x)\le \varphi(y\rs x)$. 
\end{Def}

Denote $\mathcal{PV}^{c}(A)$ the set of all commutative pseudo-valuations on $A$. 

\begin{theo} \label{va-psbe-50} A pseudo-valuation $\varphi$ on $A$ is commutative if and only if it satisfies the following conditions for all $x, y, z\in A:$ \\
$(cpv_3)$ $\varphi(x\vee_1 y\ra x)\le \varphi(z\ra (y\ra x))+\varphi(z);$ \\
$(cpv_4)$ $\varphi(x\vee_2 y\rs x)\le \varphi(z\rs (y\rs x))+\varphi(z)$. 
\end{theo}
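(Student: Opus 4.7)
The plan is to prove each direction by a short manipulation using only the pseudo-valuation inequality $(pv_2)$ and the basic axioms of a pseudo-BE algebra; no deeper structural lemmas seem necessary.

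For the forward direction, assume $\varphi$ is commutative, i.e.\ $(cpv_1)$ and $(cpv_2)$ hold. Fix $x,y,z\in A$. Applying $(pv_2)$ with the roles of $x$ and $y$ played by $z$ and $y\ra x$ respectively, I obtain
\[
\varphi(y\ra x)-\varphi(z)\le \varphi(z\ra(y\ra x)),
\]
so $\varphi(y\ra x)\le \varphi(z\ra(y\ra x))+\varphi(z)$. Chaining this with $(cpv_1)$ gives $(cpv_3)$. The $\rs$-variant is identical, using $(pv_2)$ for $\rs$ and $(cpv_2)$, which yields $(cpv_4)$.

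For the converse, assume $(cpv_3)$ and $(cpv_4)$. The idea is simply to specialize $z:=1$. By $(psBE_3)$ we have $1\ra(y\ra x)=y\ra x$ and $1\rs(y\rs x)=y\rs x$, and by $(pv_1)$ we have $\varphi(1)=0$. Substituting into $(cpv_3)$ and $(cpv_4)$ yields
\[
\varphi(x\vee_1 y\ra x)\le \varphi(y\ra x), \qquad \varphi(x\vee_2 y\rs x)\le \varphi(y\rs x),
\]
which are exactly $(cpv_1)$ and $(cpv_2)$. Hence $\varphi$ is commutative.

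There is no real obstacle here: both implications are one-line consequences of $(pv_2)$ together with the specialization $z=1$ via $(psBE_3)$ and $(pv_1)$. The only thing to be careful about is to apply $(pv_2)$ to the correct pair of arguments so that the term $\varphi(z\ra(y\ra x))$ (respectively $\varphi(z\rs(y\rs x))$) appears on the right-hand side, matching the shape of $(cpv_3)$ and $(cpv_4)$.
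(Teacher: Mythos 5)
Your proof is correct and follows essentially the same route as the paper: the forward direction chains $(cpv_1)$/$(cpv_2)$ with the instance of $(pv_2)$ giving $\varphi(y\ra x)-\varphi(z)\le\varphi(z\ra(y\ra x))$, and the converse specializes $z:=1$ using $(psBE_3)$ and $(pv_1)$. No issues.
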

\begin{proof}
Let $\varphi \in \mathcal{PV}^{c}(A)$, that is $\varphi$ satisfies conditions $(cpv_1)$ and $(cpv_2)$. \\
By $(cpv_1)$ and $(pv_2)$ we have: \\ 
$\hspace*{2cm}$ $\varphi(x\vee_1 y\ra x)-\varphi(z)\le \varphi(y\ra x)-\varphi(z)\le \varphi(z\ra (y\ra x))$, \\
that is $(cpv_3)$. \\
Similarly from $(cpv_2)$ and $(pv_2)$ we get: \\
$\hspace*{2cm}$ $\varphi(x\vee_1 y\rs x)-\varphi(z)\le \varphi(y\rs x)-\varphi(z)\le \varphi(z\rs (y\rs x))$, \\
thus $(cpv_4)$ is verified. \\
Conversely, let $\varphi \in \mathcal{PV}(A)$ satisfying conditions $(cpv_3)$ and $(cpv_4)$. \\
Taking $z=1$ we get $(cpv_1)$ and $(cpv_2)$, hence $\varphi \in \mathcal{PV}^{c}(A)$. 
\end{proof}

\begin{prop} \label{va-psbe-60} If $\varphi\in \mathcal{PV}^{c}(A)$, then $\Ker({\varphi})\in \mathcal{DS}_f(A)$.  
\end{prop}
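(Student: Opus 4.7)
The plan is to verify the two fantastic-conditions directly from the definition of a commutative pseudo-valuation, after first noting that $\Ker(\varphi)$ is already a deductive system.

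First, by Remark \ref{va-psbe-30-50}, we know $\Ker(\varphi)\in \mathcal{DS}(A)$, so it remains only to check axioms $(cds_1)$ and $(cds_2)$ of Definition \ref{comm-ds-70}. The non-negativity statement $\varphi(x)\ge 0$ for all $x\in A$ from Proposition \ref{va-psbe-20}(2) will be the workhorse: it converts an upper bound of $0$ into equality with $0$.

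For $(cds_1)$, I would take $x,y\in A$ with $y\ra x\in \Ker(\varphi)$, i.e.\ $\varphi(y\ra x)=0$. Applying $(cpv_1)$ from Definition \ref{va-psbe-40} yields $\varphi(x\vee_1 y\ra x)\le \varphi(y\ra x)=0$. Combining this with $\varphi(x\vee_1 y\ra x)\ge 0$ from Proposition \ref{va-psbe-20}(2) gives $\varphi(x\vee_1 y\ra x)=0$, so $x\vee_1 y\ra x\in \Ker(\varphi)$. The condition $(cds_2)$ follows symmetrically by replacing $(cpv_1)$ with $(cpv_2)$ and $\vee_1$ with $\vee_2$.

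There is no real obstacle here: the definition of commutative pseudo-valuation has been tailored so that the fantastic condition drops out immediately once one knows pseudo-valuations are non-negative. The only thing to be careful about is to invoke Remark \ref{va-psbe-30-50} (rather than reproving that $\Ker(\varphi)$ is a deductive system) and to handle the $\rs$-case in parallel with the $\ra$-case rather than leaving it to the reader without comment.
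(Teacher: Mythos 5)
Your proof is correct and follows essentially the same route as the paper's: invoke $(cpv_1)$ and $(cpv_2)$ to bound $\varphi(x\vee_1 y\ra x)$ and $\varphi(x\vee_2 y\rs x)$ by $0$, then use non-negativity of pseudo-valuations to conclude membership in the kernel. You are slightly more explicit than the paper in citing Remark \ref{va-psbe-30-50} for the deductive-system property and Proposition \ref{va-psbe-20}(2) for non-negativity, but the argument is the same.
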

\begin{proof}
Let $x, y\in A$ such that $y\ra x\in \Ker({\varphi})$, that is $\varphi(y\ra x)=0$. \\ 
By $(cpv_1)$, $\varphi(x\vee_1 y\ra x)\le \varphi(y\ra x)=0$, 
hence $\varphi(x\vee_1 y\ra x)=0$, so $x\vee_1 y\ra x\in \Ker({\varphi})$. \\
Similary, if $y\rs x\in \Ker({\varphi})$, applying $(cpv_2)$ we get $x\vee_2 y\rs x\in \Ker({\varphi})$. \\
Thus $\Ker({\varphi})\in \mathcal{DS}_f(A)$.
\end{proof}

\begin{prop} \label{va-psbe-70} If $A$ is a commutative pseudo-BE algebra, then 
$\mathcal{PV}^{c}(A)=\mathcal{PV}(A)$.
\end{prop}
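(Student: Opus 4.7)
The plan is to show that the nontrivial inclusion $\mathcal{PV}(A)\subseteq \mathcal{PV}^{c}(A)$ holds when $A$ is commutative; the reverse inclusion $\mathcal{PV}^{c}(A)\subseteq \mathcal{PV}(A)$ is immediate from Definition \ref{va-psbe-40}, since a commutative pseudo-valuation is a pseudo-valuation satisfying the extra conditions $(cpv_1), (cpv_2)$.

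For the forward inclusion, take any $\varphi\in \mathcal{PV}(A)$ and any $x,y\in A$. The key identity I will use is Proposition \ref{comm-psBE-100}$(1)$, which guarantees that in any commutative pseudo-BE algebra,
$$x\ra y=y\vee_1 x\ra y \quad \text{and} \quad x\rs y=y\vee_2 x\rs y.$$
Interchanging the roles of $x$ and $y$ gives $y\ra x = x\vee_1 y\ra x$ and $y\rs x = x\vee_2 y\rs x$. Therefore
$$\varphi(x\vee_1 y\ra x)=\varphi(y\ra x) \quad \text{and} \quad \varphi(x\vee_2 y\rs x)=\varphi(y\rs x),$$
so $(cpv_1)$ and $(cpv_2)$ hold trivially (with equality). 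This shows $\varphi\in \mathcal{PV}^c(A)$.

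There is essentially no obstacle: the work has already been done in Proposition \ref{comm-psBE-100}$(1)$, which is where commutativity is used (via Theorem \ref{comm-psBE-20} showing a commutative pseudo-BE algebra is a pseudo-BCK algebra, so Proposition \ref{psBE-20}$(6)$ applies). The only point requiring a moment of care is making sure the indices on $\vee_1$ and $\vee_2$ match up correctly after the swap $x\leftrightarrow y$, which they do since the identity in Proposition \ref{comm-psBE-100}$(1)$ is symmetric in form between the two implications.
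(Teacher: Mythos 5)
Your proof is correct and follows essentially the same route as the paper: both rest on the identity $y\ra x = x\vee_1 y\ra x$ (and its $\rs$ analogue), which you obtain from Proposition \ref{comm-psBE-100}$(1)$ while the paper derives it directly from Theorem \ref{comm-psBE-20} and Proposition \ref{psBE-20}$(6)$ together with commutativity — the same underlying facts. The conditions $(cpv_1)$, $(cpv_2)$ then hold with equality, exactly as in the paper's argument.
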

\begin{proof} Let $\varphi\in \mathcal{PV}(A)$ and $x, y\in A$. 
According to Theorem \ref{comm-psBE-20}, $A$ is a commutative pseudo-BCK algebra.
Applying Proposition \ref{psBE-20}$(6)$ we have: \\
$\hspace*{2cm}$ $y\ra x=y\vee_1 x\ra x=x\vee_1 y\ra x$, \\ 
$\hspace*{2cm}$ $y\rs x=y\vee_2 x\rs x=x\vee_2 y\rs x$. \\
It follows that $\varphi(x\vee_1 y\ra x)=\varphi(y\ra x)$ and $\varphi(x\vee_2 y\rs x)=\varphi(y\rs x)$. \\   
Hence $(cpv_1)$ and $(cpv_2)$ are satisfied, thus $\varphi\in \mathcal{PV}^{c}(A)$. \\ 
We conclude that $\mathcal{PV}^{c}(A)=\mathcal{PV}(A)$.
\end{proof}

\begin{rem} \label{va-psbe-80} If $\mathcal{PV}^{c}(A)=\mathcal{PV}(A)$, then $A$ need not be commutative. \\
Indeed, consider the pseudo-BE(A) algebra from Example \ref{psBE-70-30}. \\
Using the notations from Example \ref{va-psbe-30-05}, we have \\
$\hspace*{2cm}$ 
$\mathcal{PV}^c(A)=\mathcal{PV}(A)=\{\varphi_{\alpha,\beta} \mid \alpha, \beta\in {\mathbb R}, \alpha,\beta \ge 0\}$, \\ 
but $A$ is not commutative($a\vee_1 d=d \ne a=d\vee_1 a$).
\end{rem}

\begin{theo} \label{va-psbe-90}
Let $f: A\longrightarrow B$ be a pseudo-BE homomorphism and let $\varphi\in \mathcal{PV}(B)$.  
Then there exists a unique $\psi\in \mathcal{PV}(A)$ such that the following diagram is commutative. 
Moreover: \\ 
$(i)$ $\Ker(\psi)=f^{-1}(\Ker(\varphi);$ \\
$(ii)$ if $\Ker({\varphi})\in \mathcal{DS}_f(B)$, then $\Ker({\psi})\in \mathcal{DS}_f(A)$.  
\begin{center}
\begin{picture}(105,100)(0,0)
\put(0,80){$A$}
\put(8,84){\vector(1,0){72}}
\put(82,80){$B$}
\put(40,90){$f$}
\put(42,0){${\mathbb R}$}
\put(5,77){\vector(1,-2){36}}
\put(10,40){$\psi$}
\put(86,77){\vector(-1,-2){36}}
\put(73,40){$\varphi$}
\end{picture}
\end{center}
\end{theo}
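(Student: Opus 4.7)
The plan is to set $\psi := \varphi \circ f$ (this choice is forced by commutativity of the diagram) and verify that $\psi$ belongs to $\mathcal{PV}(A)$. Uniqueness is immediate: any $\psi'$ making the diagram commute must satisfy $\psi'(x) = \varphi(f(x))$ for all $x \in A$.

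For the axioms of a pseudo-valuation, first observe that $\psi(1) = \varphi(f(1)) = \varphi(1) = 0$ by the general property of pseudo-BE homomorphisms and $(pv_1)$ for $\varphi$, so $(pv_1)$ holds for $\psi$. For $(pv_2)$, fix $x,y \in A$ and use that $f$ is a homomorphism:
\begin{align*}
\psi(y) - \psi(x) &= \varphi(f(y)) - \varphi(f(x)) \\
 &\le \min\{\varphi(f(x) \ra f(y)),\ \varphi(f(x) \rs f(y))\} \\
 &= \min\{\varphi(f(x \ra y)),\ \varphi(f(x \rs y))\} \\
 &= \min\{\psi(x \ra y),\ \psi(x \rs y)\},
\end{align*}
where the inequality is $(pv_2)$ for $\varphi$. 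Hence $\psi \in \mathcal{PV}(A)$.

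For $(i)$, the chain of equivalences $x \in \Ker(\psi) \iff \psi(x) = 0 \iff \varphi(f(x)) = 0 \iff f(x) \in \Ker(\varphi) \iff x \in f^{-1}(\Ker(\varphi))$ gives $\Ker(\psi) = f^{-1}(\Ker(\varphi))$. For $(ii)$, combine $(i)$ with Proposition \ref{comm-ds-130-10}: if $\Ker(\varphi) \in \mathcal{DS}_f(B)$, then its preimage $f^{-1}(\Ker(\varphi)) \in \mathcal{DS}_f(A)$, so $\Ker(\psi) \in \mathcal{DS}_f(A)$.

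The argument is essentially routine; there is no real obstacle, since the entire proof is bookkeeping that reduces each axiom for $\psi$ to the corresponding axiom for $\varphi$ via the homomorphism property of $f$. The only point that deserves mild care is keeping the minimum over both implications $\ra$ and $\rs$ intact when passing through $f$, which is handled by the fact that $f$ preserves both operations simultaneously.
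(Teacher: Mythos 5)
Your proposal is correct and follows the same route as the paper: define $\psi:=\varphi\circ f$, verify $(pv_1)$ and $(pv_2)$ for $\psi$ via the homomorphism property of $f$, note uniqueness is forced by the commutativity of the diagram, and deduce $(i)$ directly and $(ii)$ from $(i)$ together with Proposition \ref{comm-ds-130-10}. No gaps.
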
 
\begin{proof}
Let $\varphi\in \mathcal{PV}(B)$ and define $\psi:=\varphi\circ f$. We get: \\ 
$\hspace*{2cm}$ $\psi(1)=(\varphi\circ f)(1)=\varphi(f(1))=\varphi(1)=0$. \\ 
For all $x, y\in A$ we have: \\
$\hspace*{2cm}$ $\psi(y)-\psi(x)=(\varphi\circ f)(y)-(\varphi\circ f)(y)=
\varphi(f(y))-\varphi (f(x))$ \\
$\hspace*{4.1cm}$
$\le \varphi(f(x)\ra f(y))=\varphi(f(x\ra y))=\psi(x\ra y)$. \\
Similarly $\psi(y)-\psi(x)\le \psi(x \rs y)$. \\
Hence $\psi(y)-\psi(x)\le \min \{\psi(x\ra y),\psi(x\rs y)\}$. \\
It follows that $\psi\in \mathcal{PV}(A)$. \\ 
Suppose that there is another $\psi'\in \mathcal{PV}(A)$ such that $\psi'=\varphi\circ f$. \\
Then, for all $x\in A$, we have $\psi'(x)=(\varphi\circ f)(x)=\psi(x)$, hence $\psi'=\psi$. \\
$(i)$ If $x\in \Ker({\psi})$, then $\psi(x)=0$, so $\varphi(f(x))=0$. \\
Hence $f(x)\in \Ker(\varphi)$, that is $x\in f^{-1}(\Ker(\varphi))$. \\
It follows that $\Ker(\psi)\subseteq f^{-1}(\Ker(\varphi))$. \\
Conversely, let $x\in f^{-1}(\Ker(\varphi)$, that is $f(x)\in \Ker(\varphi)$. 
It follows that $\varphi(f(x))=0$, hence $\psi(x)=0$, that is $x\in \Ker(\psi)$. 
Thus $f^{-1}(\Ker(\varphi))\subseteq \Ker(\psi)$. \\
We conclude that $\Ker(\psi)=f^{-1}(\Ker(\varphi))$. \\ 
$(ii)$ It follows by $(i)$ and Proposition \ref{comm-ds-130-10}. 
\end{proof}

\begin{theo} \label{va-psbe-100}
Let $f: A\longrightarrow B$ be a pseudo-BE isomomorphism and let $\varphi\in \mathcal{PV}(A)$.  
Then there exists a unique $\psi\in \mathcal{PV}(B)$ such that the following diagram is commutative. \\
Moreover $\Ker(\psi)=f(\Ker(\varphi))$.
\begin{center}
\begin{picture}(105,100)(0,0)
\put(0,80){$A$}
\put(8,84){\vector(1,0){72}}
\put(82,80){$B$}
\put(40,90){$f$}
\put(42,0){${\mathbb R}$}
\put(5,77){\vector(1,-2){36}}
\put(10,40){$\varphi$}
\put(86,77){\vector(-1,-2){36}}
\put(73,40){$\psi$}
\end{picture}
\end{center}
\end{theo}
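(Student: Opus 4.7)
The plan is to mimic the proof of Theorem \ref{va-psbe-90}, but using $f^{-1}$ in place of $f$. Since $f : A \longrightarrow B$ is a pseudo-BE isomorphism, the inverse $f^{-1} : B \longrightarrow A$ exists and is itself a pseudo-BE homomorphism (it preserves $\rightarrow$, $\rightsquigarrow$ and sends $1$ to $1$). So I would define $\psi := \varphi \circ f^{-1}$ and verify in turn that (a) $\psi$ is a pseudo-valuation on $B$, (b) the diagram commutes, (c) $\psi$ is the unique such map, and (d) $\Ker(\psi) = f(\Ker(\varphi))$.

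For (a), condition $(pv_1)$ is immediate since $\psi(1) = \varphi(f^{-1}(1)) = \varphi(1) = 0$. For $(pv_2)$, given $x,y \in B$, set $x' = f^{-1}(x)$, $y' = f^{-1}(y)$; because $f^{-1}$ is a homomorphism, $f^{-1}(x \rightarrow y) = x' \rightarrow y'$ and $f^{-1}(x \rightsquigarrow y) = x' \rightsquigarrow y'$, so applying $(pv_2)$ for $\varphi$ at $(x', y')$ transports directly to $(pv_2)$ for $\psi$ at $(x, y)$. For (b), commutativity amounts to $\psi \circ f = \varphi$, which holds since $\psi(f(x)) = \varphi(f^{-1}(f(x))) = \varphi(x)$ for every $x \in A$. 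For (c), if $\psi' \in \mathcal{PV}(B)$ satisfies $\psi' \circ f = \varphi$, then for $y \in B$, writing $y = f(f^{-1}(y))$, we get $\psi'(y) = \varphi(f^{-1}(y)) = \psi(y)$.

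For (d), the two inclusions are symmetric. If $y \in \Ker(\psi)$, then $\varphi(f^{-1}(y)) = 0$, so $f^{-1}(y) \in \Ker(\varphi)$, and $y = f(f^{-1}(y)) \in f(\Ker(\varphi))$. Conversely, if $y = f(x)$ with $x \in \Ker(\varphi)$, then $\psi(y) = \varphi(f^{-1}(f(x))) = \varphi(x) = 0$.

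There is no real obstacle here: the argument is essentially a bijective-variant of Theorem \ref{va-psbe-90}, and every step reduces to the fact that an isomorphism of pseudo-BE algebras has an inverse which is also a homomorphism. The only point requiring even slight care is making sure the implications $f^{-1}(x \rightarrow y) = f^{-1}(x) \rightarrow f^{-1}(y)$ and the analogous identity for $\rightsquigarrow$ are used (rather than being applied to $f$ itself), which is what forces the hypothesis ``isomorphism'' rather than just ``surjective homomorphism.''
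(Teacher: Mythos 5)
Your proof is correct and is essentially the paper's own argument: the paper defines $\psi(y):=\varphi(x)$ for the unique $x$ with $f(x)=y$, which is exactly your $\psi=\varphi\circ f^{-1}$, and both verifications rest on the inverse of a bijective homomorphism preserving $\ra$ and $\rs$. The remaining steps (uniqueness and $\Ker(\psi)=f(\Ker(\varphi))$) match the paper's as well.
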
 
\begin{proof}
Let $\varphi\in \mathcal{PV}(A)$ and let $y\in B$. 
Since $f$ is surjective, there exists $x\in A$ such that $y=f(x)$. Define $\psi(y):=\varphi(x)$. 
Obviously $\varphi(x)=\psi(y)=\psi(f(x))=(\psi\circ f)(x)$. \\ 
Since $f$ is injective, then $1$ is the only element of $A$ such that $f(1)=1$. \\
Hence $\psi(1)=\psi(f(1))=\varphi(1)=0$. \\
Let $y_1, y_2\in B$ and let $x_1, x_2\in A$ such that $f(x_1)=y_1$ and $f(x_2)=y_2$. \\
Then we have: \\
$\hspace*{2cm}$ $\psi(y_2)-\psi(y_1)=\psi(f(x_2))-\psi(f(x_1))=\varphi(x_2)-\varphi(x_1)$ \\
$\hspace*{4.4cm}$ $\le \varphi(x_1\ra x_2)=(\psi\circ f)(x_1\ra x_2)$ \\
$\hspace*{4.4cm}$ $=\psi(f(x_1\ra x_2))=\psi(f(x_1)\ra f(x_2))$ \\
$\hspace*{4.4cm}$ $=\psi(y_1\ra y_2)$. \\
Similarly $\psi(y_2)-\psi(y_1)\le \psi(y_1\rs y_2)$. \\
It follows that $\psi(y_2)-\psi(y_1)\le \min\{\psi(y_1\ra y_2),\psi(y_1\rs y_2)\}$, 
hence $\psi\in \mathcal{PV}(B)$. \\
Suppose that there is another $\psi'\in \mathcal{PV}(B)$ such that $\varphi=\psi'\circ f$. \\ 
Let $y\in B$ and $x\in A$ such that $f(x)=y$. 
Then we have: \\
$\hspace*{2cm}$ $\psi'(y)=\psi'(f(x))=(\psi'\circ f)(x)=\varphi(x)=\psi(y)$, \\
hence $\psi'=\psi$. \\ 
Consider $y\in \Ker(\psi)$, that is $\psi(y)=0$. Let $x\in A$ such that $f(x)=y$.  
Since $\psi(y)=\varphi(x)$, we get $\varphi(x)=0$, that is $x\in \Ker(\varphi)$. 
Hence $y=f(x)\in f(\Ker(\varphi))$, so $\Ker(\psi)\subseteq f(\Ker(\varphi))$. \\
Conversely, let $y\in f(\Ker(\varphi))$, so $y=f(x)$ with $x\in \Ker(\varphi)$, that is $\varphi(x)=0$. \\
It follows that $\psi(y)=\varphi(x)=0$, so $y\in \Ker(\psi)$, hence $f(\Ker(\varphi))\subseteq Ker(\psi)$. \\
We conclude that $\Ker(\psi)=f(\Ker(\varphi))$.
\end{proof}



$\vspace*{5mm}$

\section{Concluding remarks} 

Many information processing branches are based on the non-classical logics and deal with uncertainty information (fuzziness, randomness, vagueness, etc.). For this pupose, different probabilistic models have been constructed on algebras of fuzzy logics: states, generalized states, internal states, state-morphism operators, measures. 
In this paper we show that the commutative property plays an important role in probabilities theory on 
fuzzy structures. We also emphasize that in the case of commutative pseudo-BE algebras the two types 
of internal states coincide. Important results on probabilistic models on algebras of non-classical logic 
have been proved based on fantastic deductive systems.   
The kernel of a Bosbach state (state-morphism, measure, type II state operator, pseudo-valuation) 
on pseudo-BE algebras is a fantastic deductive system. \\
As another direction of research, one could investigate the \emph{$n$-fold commutative pseudo-BE algebras} and 
the \emph{$n$-fold fantastic deductive systems} of pseudo-BE algebras. 
We recall that a BCK-algebra $(A,*,0)$ is called $n$-fold commutative if there exists a fixed natural number $n$ 
such that the identity $x* y=x*(y*(y*x^n))$, where $y*x^0=y$, $y*x^{n+1}=(y*x^n)*x$, holds for all 
$x, y\in A$ (see \cite{Xie1}). 
An ideal $I$ of a BCK-algebra $(A,*,0)$ is called $n$-fold commutative if there exists a fixed natural number $n$ 
such that $x*y\in I$ implies $x* y=x*(y*(y*x^n))\in I$ for all $x, y\in A$ (see \cite{Huang1}). 
State pseudo-BE algebras and state-morphism pseudo-BE algebras could be studied in the context of 
$n$-fold commutative pseudo-BE algebras and $n$-fold fantastic deductive systems. 
These results could be extended to more general structures, such as pseudo-BCI algebras and pseudo-CI algebras.

$\vspace*{5mm}$

\vspace*{3mm}

\begin{flushright}
\begin{minipage}{148mm}\sc\footnotesize
Lavinia Corina Ciungu\\
Department of Mathematics \\
University of Iowa \\
14 MacLean Hall, Iowa City, Iowa 52242-1419, USA \\
{\it E--mail address}: {\tt lavinia-ciungu@uiowa.edu}

\end{minipage}
\end{flushright}

\end{document}